\documentclass[a4paper,12pt]{article}

\usepackage{amscd,amssymb,amsmath,amsfonts,amsthm, mathrsfs}
\usepackage{stmaryrd, accents}
\usepackage{graphicx}
\usepackage{verbatim,enumerate,paralist}
\usepackage{textcomp}
\usepackage{tensor}
\usepackage[colorlinks]{hyperref}
\hypersetup{%
  urlcolor=blue,linkcolor=blue,citecolor=blue
}
\usepackage{pdfpages}

\usepackage{tikz-cd}

\usepackage[T1]{fontenc}
\usepackage[scr=rsfs,cal=boondox]{mathalfa}

\usepackage[arrow, matrix, tips, curve, graph, rotate]{xy}
\SelectTips{cm}{10}

\makeatletter
\def\matrixobject@{%
  \edef \next@{={\DirectionfromtheDirection@ }}%
  \expandafter \toks@ \next@ \plainxy@
  \let\xy@@ix@=\xyq@@toksix@
  \xyFN@ \OBJECT@}
\let\xy@entry@@norm=\entry@@norm
\def\entry@@norm@patched{%
  \let\object@=\matrixobject@
  \xy@entry@@norm }
\AtBeginDocument{\let\entry@@norm\entry@@norm@patched}
\makeatother

\newcommand{\twocong}[2][0.5]{\ar@{}[#2] \save ?(#1)*{\cong}\restore}
\newcommand{\twoeq}[2][0.5]{\ar@{}[#2] \save ?(#1)*{=}\restore}
\newcommand{\ltwocell}[3][0.5]{\ar@{}[#2] \ar@{=>}?(#1)+/r 0.2cm/;?(#1)+/l 0.2cm/^{#3}}
\newcommand{\rtwocell}[3][0.5]{\ar@{}[#2] \ar@{=>}?(#1)+/l 0.2cm/;?(#1)+/r 0.2cm/^{#3}}
\newcommand{\utwocell}[3][0.5]{\ar@{}[#2] \ar@{=>}?(#1)+/d  0.2cm/;?(#1)+/u 0.2cm/_{#3}}
\newcommand{\dtwocell}[3][0.5]{\ar@{}[#2] \ar@{=>}?(#1)+/u  0.2cm/;?(#1)+/d 0.2cm/^{#3}}
\newcommand{\ultwocell}[3][0.5]{\ar@{}[#2] \ar@{=>}?(#1)+/dr  0.2cm/;?(#1)+/ul 0.2cm/^{#3}}
\newcommand{\urtwocell}[3][0.5]{\ar@{}[#2] \ar@{=>}?(#1)+/dl  0.2cm/;?(#1)+/ur 0.2cm/^{#3}}
\newcommand{\dltwocell}[3][0.5]{\ar@{}[#2] \ar@{=>}?(#1)+/ur  0.2cm/;?(#1)+/dl 0.2cm/^{#3}}
\newcommand{\drtwocell}[3][0.5]{\ar@{}[#2] \ar@{=>}?(#1)+/ul  0.2cm/;?(#1)+/dr 0.2cm/^{#3}}

\makeatletter
\newcommand{\xRightarrow}[2][]{\ext@arrow 0359\Rightarrowfill@{#1}{#2}}
\makeatother

\newtheorem{theorem}{Theorem}[section]
\newtheorem{corollary}[theorem]{Corollary}
\newtheorem{lemma}[theorem]{Lemma}
\newtheorem{proposition}[theorem]{Proposition}

\newtheorem{definition}[theorem]{Definition}
\newtheorem{example}[theorem]{Example}

\newtheoremstyle{step}{2\bigskipamount}{\medskipamount}{\upshape}{}{\itshape}{. }{ }{\underline{Step~\thestep}}
\theoremstyle{step}

\renewcommand{\thestep}{\arabic{step}}

\newcommand{\Lra}{\Longrightarrow}
\newcommand{\Ra}{\Rightarrow}

\newcommand{\ldual}[1]{\mathord{{\let\nolimits\relax\sideset{^\wedge}{}{#1}}}}
\newcommand{\laction}[2]{\mathord{{\let\nolimits\relax\sideset{^{#1}}{}{#2}}}}
\newcommand{\conj}[2]{\mathord{{\let\nolimits\relax\sideset{^{#1}}{}{#2}}}}

\newcommand{\xra}{\xrightarrow}
\newcommand{\xla}{\xleftarrow}
\makeatletter
\newcommand{\xRa}[2][]{\ext@arrow 0359\Rightarrowfill@{#1}{#2}}
\makeatother

\usepackage[utf8]{inputenc}
\DeclareFontFamily{U}{min}{}
\DeclareFontShape{U}{min}{m}{n}{<-> udmj30}{}
\newcommand\yo{\!\text{\usefont{U}{min}{m}{n}\symbol{'207}}\!}

\def\CA{{\mathscr A}}

\def\CD{{\mathscr D}}

\def\CH{{\mathscr H}}

\def\CK{{\mathscr K}}

\def\CM{{\mathscr M}}

\def\CV{{\mathscr V}}
\def\CW{{\mathscr W}}
\def\CX{{\mathscr X}}

\def\ot{{\otimes}}

\DeclareMathAlphabet{\mathbbe}{U}{bbold}{m}{n}

\begin{document}

\author{Ross Street}

\title{Homodular pseudofunctors and bicategories of modules} 
\date{\today}
\maketitle
\noindent {\small{\emph{2020 Mathematics Subject Classification:} 18D20; 18D30; 18D60; 18N10}}
\\
{\small{\emph{Key words and phrases:} enriched category; bicategory; distributor; module; fibred category; homology; cofibration; coopfibration; comma category; coslice}

%\tableofcontents
%\newpage
%\begin{abstract}
%\noindent  
\section*{Introduction}

The universal property for the B\'enabou bicategory of distributors 
(although we  call them ``modules'') presented here is somewhat implicitly spread over the series of papers
\cite{BungeThesis, LawMetric, Ben1973, G, GG, 8, 14, 85, 22, 46} and yet, to my knowledge, 
does not appear in print.
The inclusion of a bicategory $\CW$ into the bicategory $\CW\text{-}\mathrm{Mod}$
of $\CW$-enriched categories and modules between them does have a completion
property with respect to freely adjoining lax colimits (collages); see \cite{85, CKW}.
What we have in mind is an objective version of the notion of {\em homological functor}
used by Andr\'e Joyal in \cite{homsymm}.

Here we are interested in the universal property of the construction of $\CW\text{-}\mathrm{Mod}$ from $\CW\text{-}\mathrm{Cat}$. As we pointed out in \cite{14}, cofibrations appear naturally in
this construction, so it is not surprising that they play a role here. In the first section, we review
cofibrations in bicategories and related concepts. In the second section we interpret all that for
enriched categories. We will restrict to the case where
the base $\CW$ has but one object and indeed reduces to a monoidal category 
$\CV$ so that we can refer to Kelly's book \cite{KellyBook} for background. 
 The usual inclusion $\CV\text{-}\mathrm{Cat}\to\CV\text{-}\mathrm{Mod}$ taking each
$\CV$-functor to a left adjoint $\CV$-module has certain properties which make it
a {\em homodular pseudofunctor} in a sense defined in the third section; indeed, it is the universal
such with domain $\CV\text{-}\mathrm{Cat}$.

In Section 4 we obtain the just mentioned universality as a special case of universality for a slightly modified 
version of pro-arrow equipment $\CK\to \CM$ in the sense of Richard Wood \cite{RJWEquipII}; 
we use the term ``module equipment''. For example, the inclusion $\CM_*\to \CM$
of the subbicategory of left adjoint morphisms into a finitary cosmos $\CM$ (in the sense
of \cite{46}) is universal homodular with domain $\CM_*$.

We briefly examine the behaviour of the monoidal
structures of coproduct $+$ and tensor $\otimes$ under the universality.  

The cocartesian monoidal bicategory $\CV\text{-}\mathrm{Mod}$ is actually {\em trace monoidal} in the obvious bicategorical version of traced monoidal in \cite{51}. We describe the $\mathrm{Int}$ construction of an autonomous monoidal bicategory $\mathrm{i}\CM$
from bicategories $\CM$ such as $\CV\text{-}\mathrm{Mod}$. 
The story is much the same as the sets and relations case described near the end of \cite{51}. 

\section*{\small{Acknowledgement}} Nathanael Arkor provided very helpful feedback, much like
a referee's report, on the first arXiv version of this paper by directing me to the references \cite{RoseWood_a, RoseWood_b, BKPVa, BKPVb, AT} and by suggesting further examples. The generalised
version of the main theorem that appears here was developed because of 
Nathanael's prodding me to include more of those examples.     

\tableofcontents

\section{Adjoints, coslices, bipushouts and cofibrations}

A morphism $C\xra{u} A$ in any bicategory $\CM$ is said to have right adjoint $A\xra{u^*}C$
when there exist 2-morphisms $\varepsilon : u\circ u^*\Ra 1_A$ and $\eta : 1_C\Ra u^*\circ u$,
called {\em counit} and {\em unit} respectively, such that the composites $u\xra{u\eta}uu^*u\xra{\varepsilon u}u$ and $u^*\xra{\eta u^*}u^*uu^*\xra{u^* \varepsilon}u^*$ are identities.
The notation is $u\dashv u^*$.
We also say $u$ is left adjoint to $u^*$. A left adjoint for $u$ may be denoted by $u^!$. 

\begin{lemma}\label{adjff}
Suppose $u^! \dashv u \dashv u^*$ in $\CM$. The unit for $u \dashv u^*$ is invertible if and only
if the counit for $u^! \dashv u$ is invertible.   
\end{lemma}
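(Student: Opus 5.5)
The plan is to argue representably and reduce to the Yoneda lemma. For any object $X$ of $\CM$, the adjunctions $u^!\dashv u\dashv u^*$ with $u\colon C\to A$ (so $u^!\colon A\to C$) induce adjunctions between hom-categories. Postcomposition gives
$$\CM(X,A)\xra{u^!\circ -}\CM(X,C),\qquad \CM(X,C)\xra{u\circ -}\CM(X,A),\qquad \CM(X,A)\xra{u^*\circ -}\CM(X,C),$$
with $(u^!\circ -)\dashv(u\circ -)\dashv(u^*\circ -)$. The units and counits are obtained by whiskering the original ones with morphisms $X\to C$ or $X\to A$; the triangle identities hold because whiskering preserves them. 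Pre-composition gives, contravariantly, adjunctions $(-\circ u^*)\dashv(-\circ u)\dashv(-\circ u^!)$ on $\CM(A,Y)\to\CM(C,Y)$, but postcomposition will be enough.

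The key step is then the ordinary categorical fact: if $L\dashv G\dashv R$ for functors $G\colon\mathcal{C}\to\mathcal{A}$, then the unit $1\Ra RG$ is invertible iff $G$ is fully faithful, iff the counit $LG\Ra 1$ is invertible. For the proof, $\mathcal{A}(Ga,Gb)\cong\mathcal{C}(a,RGb)$, and the map $\mathcal{C}(a,b)\to\mathcal{A}(Ga,Gb)$ induced by $G$ corresponds to composition with the unit $b\to RGb$. By Yoneda, $G$ is fully faithful iff every unit component is invertible. Dually, $\mathcal{A}(Ga,Gb)\cong\mathcal{C}(LGa,b)$, and $G$ is fully faithful iff every counit component $LGa\to a$ is invertible. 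Both sides therefore characterize fully faithfulness of $G$. We apply this with $G=u\circ -$ on $\CM(X,C)$.

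Finally, we translate back. The unit $\eta\colon 1_C\Ra u^*u$ is invertible iff its whiskering $\eta f$ is invertible for all $f\colon X\to C$; for the converse take $X=C$ and $f=1_C$. Likewise the counit $u^!u\Ra 1_C$ is invertible iff all of its whiskerings are, again testing at $1_C$. So both conditions say exactly that $u\circ-$ is fully faithful on every $\CM(X,C)$, and they are equivalent.

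The only point needing care is checking that the unit and counit of the induced hom-level adjunctions really are the whiskered original 2-cells, so that the test at $X=C$, $f=1_C$ recovers $\eta$ and the counit themselves up to the unit constraints of $\CM$. This is routine but requires attention to coherence isomorphisms in a general bicategory. Alternatively, one could use the direct mate calculus: the mate of $\eta^{-1}$ under the two adjunctions gives an explicit inverse to the counit. I would fall back on that only if the representable bookkeeping becomes awkward.
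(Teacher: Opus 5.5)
Your proposal is correct and matches the paper's proof. The paper also notes that in $\mathrm{Cat}$ both conditions are equivalent to full faithfulness of $u$, and then transfers this by a representability argument. You make that argument explicit through the induced adjunctions $(u^!\circ -)\dashv(u\circ -)\dashv(u^*\circ -)$ on the hom-categories $\CM(X,-)$ and the test at $X=C$, $f=1_C$.
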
 
\begin{proof}
If $\CM = \mathrm{Cat}$, each condition is equivalent to full faithfulness of the functor $u$.
The general result follows by a representability argument.   
\end{proof} 

Write $\CM_*$ for the subbicategory of $\CM$ with morphisms restricted to {\em maps}; that is, those morphisms $f$ with right adjoints $f^*$. Write $\mathcal{I} : \CM_*\to \CM$ for the inclusion pseudofunctor.  

For functors $A\xla{f} C\xra{g} G$, we will use the notation $f\small{\downarrow} g$ for the
comma category (a construction from \cite{LawLaJolla} with the present notation adopted in \cite{CWM}); however, we prefer the term {\em slice of $f$ over $g$}.
The objects are triples $(a, fa\xra{\gamma} gb, b)$ where $a\in A$
and $b\in B$ are objects and $\gamma$ is a morphism in $C$, the morphisms
$(a, fa\xra{\gamma} gb, b)\xra{(\alpha, \beta)} (a', fa'\xra{\gamma'} gb', b')$ consist of
morphisms $a\xra{\alpha}a'$ in $A$, $b\xra{\beta}b'$ in $B$ such that $\gamma' \circ f\alpha = g\beta \circ \gamma$. There are projection functors $f\small{\downarrow} g\xra{s} A$
and $f\small{\downarrow} g\xra{t} B$ with an obvious natural transformation $f\circ s\xra{\lambda} g\circ t$. Restricting the objects $(a, fa\xra{\gamma} gb, b)$ of $f\small{\downarrow} g$ to those
with $\gamma$ invertible, we obtain the pseudopullback (known also as the ``iso-comma category'').   

Slices can be defined in bicategories and the results are much as for the 2-category case considered in \cite{8}. In any bicategory $\CK$, the {\em slice (or comma object)} of a cospan $A\xra{f}C\xla{g}B$
is a diagram  
\begin{eqnarray}\label{slice}
\begin{aligned}
\xymatrix{
f\small{\downarrow} g \ar[d]_{s}^(0.5){\phantom{aaaaa}}="1" \ar[rr]^{t}  && B \ar[d]^{g}_(0.5){\phantom{aaaaa}}="2" \ar@{=>}"1";"2"^-{\ \lambda \ = \ \lambda_{f,g}}
\\
A \ar[rr]_-{f} && C 
}
\end{aligned}
\end{eqnarray}
with the property that pasting the diagram defines an equivalence of categories
\begin{eqnarray*}
\CK(X,f\small{\downarrow} g) \simeq \CK(X,f)\small{\downarrow} \CK(X,g) \ .
\end{eqnarray*}
If in this definition we had required $\lambda_{f,g}$ to be invertible and replaced
the comma category $\CK(X,f)\small{\downarrow} \CK(X,g)$ by the pseudopullback, 
we would have the definition of {\em bipullback} of the cospan.

Here is Proposition 5 of \cite{8}.
\begin{proposition}\label{sliceexact}
In the slice \eqref{slice}, if $f\dashv u$ then $t$ has a right adjoint $v$ with invertible
counit. Moreover, the mate $\bar{\lambda} : s\circ v\to u\circ g$ of $\lambda$ is invertible.
\begin{eqnarray}\label{exactslice}
\begin{aligned}
\xymatrix{
f\small{\downarrow} g \ar[d]_{s}^(0.5){\phantom{aaaaa}}="1"  && B \ar[ll]_{v} \ar[d]^{g}_(0.5){\phantom{aaaaa}}="2" \ar@{=>}"1";"2"^-{\bar{\lambda}}_-{\cong}
\\
A  && C \ar[ll]^-{u}
}
\end{aligned}
\end{eqnarray}
\end{proposition}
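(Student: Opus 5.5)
Since the defining property of the slice is representable, I would first prove the statement in $\mathrm{Cat}$ and then transport it to an arbitrary bicategory $\CK$ by applying $\CK(X,-)$, in the same spirit as the representability argument for Lemma~\ref{adjff}. For each $X$, the pseudofunctor $\CK(X,-)$ sends $f\dashv u$ to an adjunction $\CK(X,f)\dashv\CK(X,u)$. The defining equivalence identifies $\CK(X,f\small{\downarrow} g)$ with the comma category $\CK(X,f)\small{\downarrow}\CK(X,g)$, and under this identification $\CK(X,s)$, $\CK(X,t)$, $\CK(X,\lambda)$ become the projections and the canonical transformation. So the first task is the $\mathrm{Cat}$ case.

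In $\mathrm{Cat}$, let $f\dashv u$ have unit $\eta$ and counit $\varepsilon$. Define $v:B\to f\small{\downarrow} g$ by $v(b)=(ugb,\ \varepsilon_{gb}: fugb\to gb,\ b)$, acting on morphisms by $\beta\mapsto (ug\beta,\beta)$; naturality of $\varepsilon$ makes this a morphism of the comma category. Then $tv=1_B$. To see that $v$ is right adjoint to $t$, take an object $(a,\gamma:fa\to gb',b')$. A morphism $(a,\gamma,b')\to v(b)$ is a pair $(\alpha:a\to ugb,\ \beta:b'\to b)$ with $\varepsilon_{gb}\circ f\alpha=g\beta\circ\gamma$. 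By the adjunction $f\dashv u$, $\alpha$ is forced to be the transpose of $g\beta\circ\gamma$. Hence such morphisms correspond bijectively to $\beta: t(a,\gamma,b')\to b$, naturally in both variables. This gives $t\dashv v$ with identity (so invertible) counit $tv=1_B$. The mate $\bar\lambda: sv\Rightarrow ug$ has component at $b$ equal to $u(\lambda_{vb})\circ\eta_{svb}$, followed by the counit, which is an identity. That is $u\varepsilon_{gb}\circ\eta_{ugb}=1_{ugb}$ by a triangle identity, so $\bar\lambda$ is invertible; indeed it is an identity.

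Next, lift to $\CK$. For each $X$, the $\mathrm{Cat}$ argument gives a right adjoint to $\CK(X,t)$ with invertible counit. These right adjoints are pseudonatural in $X$, because they are built from $\CK(X,u)$, $\CK(X,g)$ and $\CK(X,\varepsilon)$, all of which are natural in $X$. By the bicategorical Yoneda lemma they are therefore represented by a morphism $v:B\to f\small{\downarrow} g$. Concretely, $v$ is the morphism corresponding under the slice equivalence to the triple $(ug,\ \varepsilon g: fug\Rightarrow g,\ 1_B)$. Its unit and counit are obtained the same way, and the triangle identities hold because they hold after applying every $\CK(X,-)$. The same reasoning shows that $\bar\lambda$ becomes invertible under every $\CK(X,-)$, so it is invertible in $\CK$.

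I expect the main obstacle to be the representability step. One must check carefully that the pointwise adjunctions are pseudonatural in $X$, and that the mate formed in $\CK$ corresponds to the mate formed in each $\CK(X,-)$. Both checks need coherence bookkeeping in a bicategory rather than a 2-category. Alternatively, one can avoid Yoneda altogether: define $v$ directly as above, define the unit $1\Rightarrow vt$ via the universal property of the slice from $(\eta s,1_t)$, and verify the triangle identities using the uniqueness part of the slice equivalence. This route is the one in the style of Proposition~5 of \cite{8}.
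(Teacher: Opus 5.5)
Your argument is correct. The paper itself gives no proof here: it quotes the statement as Proposition~5 of \cite{8}, which concerns comma objects in a 2-category, and only remarks that in a bicategory ``the results are much as for the 2-category case.''

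Your proposal supplies that transfer explicitly. It uses the same representability device the paper invokes for Lemma~\ref{adjff}. The $\mathrm{Cat}$ computation is right: the counit of $t\dashv v$ is the identity $tv=1_B$, and the component of $\bar\lambda$ at $b$ is $u\varepsilon_{gb}\circ\eta_{ugb}=1_{ugb}$ by a triangle identity. Reflecting the adjunction and the invertibility of $\bar\lambda$ back along the locally fully faithful Yoneda embedding is legitimate once the pointwise adjunctions are known to be pseudonatural with modifications as units and counits. That is exactly the bookkeeping you flag, and it does go through, since everything is built by whiskering with $u$, $g$, $\eta$, $\varepsilon$ and $\lambda$.

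The direct alternative you mention avoids the Yoneda and pseudonaturality checks, at the cost of verifying the triangle identities by hand through the two-dimensional universal property of the slice. There is one slip in how you describe it. The unit $1\Rightarrow vt$ must correspond to the pair $(u\lambda\circ\eta s,\ 1_t)$, not $(\eta s,\ 1_t)$. Its first component has to land in $svt\cong ugt$, and the compatibility condition $\varepsilon gt\circ f\sigma=\lambda$ forces $\sigma$ to be the transpose of $\lambda$ under $f\dashv u$. This is exactly what your $\mathrm{Cat}$ bijection produces.
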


A {\em coslice} of a span $A\xla{u}C\xra{v}B$ is a diagram
\begin{eqnarray}\label{coslice}
\begin{aligned}
\xymatrix{
C \ar[d]_{v}^(0.5){\phantom{aaaaa}}="1" \ar[rr]^{u}  && A \ar[d]^{i}_(0.5){\phantom{aaaaa}}="2" \ar@{<=}"1";"2"^-{\gamma \ = \ \gamma_{u,v}}
\\
B \ar[rr]_-{j} && u\small{\uparrow} v 
}
\end{aligned}
\end{eqnarray}
which is a slice in $\CK^{\mathrm{op}}$.
Similarly, {\em bipushout} is bipullback in $\CK^{\mathrm{op}}$. 

 \begin{equation}\label{horizstackco}
 \begin{aligned}
\xymatrix{
C \ar[d]_{v}^(0.5){\phantom{aaaa}}="1" 
\ar[rr]^{u}  
&& A \ar[rr]^{n} 
     \ar[d]^{i}_(0.5){\phantom{aaaa}}="2" \ar@{<=}"1";"2"^-{\gamma} 
     \ar@{}[d]^{\phantom{aaaa}}="3"
&& E \ar[d]^{k}_(0.5){\phantom{aaaa}}="4" \ar@{<=}"3";"4"^-{\phi}_-{\cong}
\\
B \ar[rr]_-{j} && D \ar[rr]_-{m} && F 
}
 \end{aligned}
\end{equation}

Here is a dual of a fact appearing, for example, as Proposition 1.7 of \cite{46}.
\begin{proposition}\label{horizstackprop}
In the diagram \eqref{horizstackco}, suppose the left-hand square is a coslice. The
right-hand square is a bipushout if and only if the pasted diagram is a coslice of the span $E\xla{\ n \ \circ \ u}C\xra{\phantom{aa}v\phantom{aa}}B$.
\end{proposition}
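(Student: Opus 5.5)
We work representably, dualizing to a pasting lemma for comma objects and pseudopullbacks of hom-categories. For each object $X$ of $\CK$, apply $\CK(-,X)$ to the diagram \eqref{horizstackco}. Since coslices and bipushouts in $\CK$ are slices and bipullbacks in $\CK^{\mathrm{op}}$, the left-hand square being a coslice says that precomposition with the square gives an equivalence
\begin{eqnarray*}
\CK(D,X)\simeq \CK(u,X)\small{\downarrow}\CK(v,X)
\end{eqnarray*}
(with the direction of the comma fixed by the direction of $\gamma$). Similarly, the right-hand square is a bipushout precisely when $\CK(F,X)$ is equivalent, via restriction, to the pseudopullback of $\CK(D,X)\xra{\CK(i,X)}\CK(A,X)\xla{\CK(n,X)}\CK(E,X)$. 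The pasted square is a coslice of $(n\circ u, v)$ precisely when $\CK(F,X)$ is equivalent, via restriction, to the comma category built from $\CK(nu,X)$ and $\CK(v,X)$.

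The heart of the argument is the pasting lemma in $\mathrm{Cat}$ for the following situation. Consider a comma square $P=f\small{\downarrow} g$ of functors with projection $s\colon P\to A$, and a functor $h\colon E\to A$. Then the pseudopullback of $s$ along $h$ is canonically equivalent to the comma category $(f\circ h)\small{\downarrow} g$. The comparison functor sends a triple consisting of an object $(a,\gamma,b)\in f\small{\downarrow}g$, an object $e\in E$, and an isomorphism $\theta\colon a\cong he$ to the object $(e,\ \gamma\circ f\theta^{-1},\ b)$. Its pseudo-inverse takes $(e,\delta,b)$ to $((he,\delta,b),e,1)$. Checking that these functors are mutually inverse up to isomorphism is routine. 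With $f=\CK(u,X)$, $g=\CK(v,X)$ and $h=\CK(n,X)$, this identifies the pseudopullback of $\CK(i,X)$ along $\CK(n,X)$ with the comma category for the pasted square. We must check that this identification is compatible with the canonical comparison functors out of $\CK(F,X)$ induced by $\phi$ and by the pasted 2-cell. That compatibility holds because $\phi$ is invertible and the pasting of $\gamma$ with $\phi$ is exactly the composite $\gamma\circ f\theta^{-1}$ above, up to the coherence isomorphisms of the bicategory.

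Both implications then follow from the fact that equivalences of categories satisfy 2-out-of-3. We have a commuting (up to isomorphism) triangle consisting of: the restriction functor from $\CK(F,X)$ to the pseudopullback, the equivalence from the pseudopullback to the comma category given by the pasting lemma and the hypothesis on the left square, and the restriction functor from $\CK(F,X)$ to the comma category for the pasted square. Hence the first functor is an equivalence for all $X$ if and only if the third is. This proves the proposition.

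I expect the main obstacle to be the bookkeeping in the middle step. One must confirm that the comparison functor from $\CK(F,X)$, after composition with the pasting-lemma equivalence, really agrees up to coherent isomorphism with the comparison induced by the pasted 2-cell. One must also make sure the directions of the comma categories match under the $\mathrm{op}$-dualization. In a bicategory this involves associators and unitors, but they enter only through invertible 2-cells, so they do not affect whether the functors are equivalences.
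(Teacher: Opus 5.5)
Your proof is correct. You reduce representably to the $\mathrm{Cat}$ pasting lemma (a pseudopullback of a comma projection is again a comma category), transport along the equivalence $\CK(D,X)\simeq\CK(u,X)\small{\downarrow}\CK(v,X)$ over $\CK(A,X)$, and finish with 2-out-of-3 for equivalences; the paper gives no separate argument, only invoking the dual of Proposition 1.7 of \cite{46}, and your representable argument is the standard proof of that pasting fact, so the approaches agree in substance.
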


As a consequence of a dual of Proposition~\ref{bipboffib} below, we have:
\begin{proposition}\label{anotherprop}
Take $B=C$ and $v=1_C$ in diagram \eqref{horizstackco} and suppose the left-hand square 
is a coslice and the right-hand square is a bipushout. If $n$ has a left adjoint $r$ then $m$ has a left adjoint $\ell$ satisfying $\ell\circ k\cong i\circ r$. If the counit of $r\dashv n$ is invertible then so
is the counit of $\ell \dashv m$.
\end{proposition}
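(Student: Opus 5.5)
The plan is to exploit Proposition~\ref{horizstackprop}: with $B=C$ and $v=1_C$, the pasted diagram in \eqref{horizstackco} is a coslice of the span $E\xla{n\circ u}C\xra{1_C}C$. So both $D=u\small{\uparrow}1_C$ and $F\simeq (n\circ u)\small{\uparrow}1_C$ are coslices, and I will work entirely with their universal properties. Dually to the definition of slice, for each object $X$ of $\CK$ we have equivalences
\begin{eqnarray*}
\CK(D,X)\simeq \mathcal{C}_D(X) , \qquad \CK(F,X)\simeq \mathcal{C}_F(X) .
\end{eqnarray*}
Here objects of $\mathcal{C}_D(X)$ are triples $(a,b,\sigma)$ with $a:A\to X$, $b:C\to X$ and $\sigma: a\circ u\Ra b$. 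Objects of $\mathcal{C}_F(X)$ are triples $(x,y,\theta)$ with $x:E\to X$, $y:C\to X$ and $\theta: x\circ n\circ u\Ra y$. Under these equivalences, precomposition with $m$ corresponds to $(x,y,\theta)\mapsto(x\circ n,y,\theta)$, by the pasting description of $m$ (since $m\circ i\cong k\circ n$ and $m\circ j$ is the second leg).

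First I construct $\ell$. I take the object of $\mathcal{C}_F(D)$ given by $x=i\circ r$ and $y=j$. Its $2$-cell is the composite
\begin{eqnarray*}
i\circ r\circ n\circ u\xRa{\ i\varepsilon u\ } i\circ u\xRa{\ \gamma\ } j ,
\end{eqnarray*}
where $\varepsilon: r\circ n\Ra 1_A$ is the counit of $r\dashv n$. The corresponding morphism $\ell: F\to D$ then satisfies $\ell\circ k\cong i\circ r$ and $\ell\circ m\circ j\cong j$. More generally, precomposition with $\ell$ corresponds to the functor $\mathcal{C}_D(X)\to\mathcal{C}_F(X)$ sending $(a,b,\sigma)$ to $(a\circ r,\,b,\,\sigma\cdot(a\varepsilon u))$.

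Next I show $\ell\dashv m$ by a representability argument, as in the proof of Lemma~\ref{adjff}. It suffices to produce, pseudonaturally in $X$, an adjunction $\CK(m,X)\dashv\CK(\ell,X)$; by the bicategorical Yoneda lemma this yields $\ell\dashv m$. The adjunction $r\dashv n$ gives $\CK(n,X)\dashv\CK(r,X)$, with unit $a\Ra a\circ r\circ n$ built from $\varepsilon$ and counit $x\circ n\circ r\Ra x$ built from the unit $\eta:1_E\Ra n\circ r$. The main step is to lift this to the comma-type categories above. I would check that the functors $(x,y,\theta)\mapsto(xn,y,\theta)$ and $(a,b,\sigma)\mapsto(ar,b,\sigma\cdot a\varepsilon u)$ are adjoint, with the second component of the unit and counit being identities. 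This amounts to verifying that the lifted unit and counit are morphisms in the comma categories, which reduces to the triangle identities for $r\dashv n$. Pseudonaturality in $X$ is routine since everything is defined by whiskering.

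Finally, the counit of $\ell\dashv m$ corresponds under this representable description to the unit of the lifted adjunction. Its first component is the unit of $\CK(n,X)\dashv\CK(r,X)$, namely whiskering by $\varepsilon$, and its second component is an identity. So if $\varepsilon$ is invertible, this unit is invertible for every $X$, and hence the counit of $\ell\dashv m$ is invertible. I expect the main obstacle to be the bookkeeping in the lifting step: keeping track of the direction of the $2$-cells in the coslice, and confirming that the comma morphism conditions hold using only the triangle identities.
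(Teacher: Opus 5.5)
Your proof is correct in substance and takes a different route from the paper. You do need to repair a direction slip in exactly the step you flagged. With $\eta\colon 1_E\Ra n\circ r$ and $\varepsilon\colon r\circ n\Ra 1_A$, the adjunction $\CK(n,X)\dashv\CK(r,X)$ has unit $x\eta\colon x\Ra x\circ n\circ r$ and counit $a\varepsilon\colon a\circ r\circ n\Ra a$. You wrote both with reversed directions and swapped names; no 2-cell $a\Ra a\circ r\circ n$ can be built from $\varepsilon$. Likewise, $\CK(-,X)$ sends the counit $\ell\circ m\Ra 1_D$ to the \emph{counit}, not the unit, of $\CK(m,X)\dashv\CK(\ell,X)$.

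The two swaps cancel, so your conclusion stands:
\begin{itemize}
\item the lifted unit $(x\eta,1_y)$ is a comma morphism by the triangle identity $(n\varepsilon)\cdot(\eta n)=1_n$;
\item the lifted counit $(a\varepsilon,1_b)$ is a comma morphism trivially, and it is invertible when $\varepsilon$ is;
\item since $\CK(D,X)\simeq\mathcal{C}_D(X)$ reflects isomorphisms, the counit of $\ell\dashv m$ is invertible. Up to the canonical isomorphisms, its restrictions along $i$ and $j$ are $i\varepsilon$ and an identity.
\end{itemize}

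The paper instead obtains the statement from a dual of Proposition~\ref{bipboffib}. The coslice injection $i$ is a cofibration, and a bipushout along a cofibration transfers the left adjoint $r$ of $n$ to a left adjoint $\ell$ of $m$, with the mate $\ell\circ k\Ra i\circ r$ of $\phi$ invertible. That argument is more conceptual and works for bipushouts along any cofibration, not only along a coslice injection with $v=1_C$. However, the counit clause is not part of Proposition~\ref{bipboffib}, and the paper leaves it implicit.

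Your route uses Proposition~\ref{horizstackprop} to present $F$ as the coslice $(n\circ u)\uparrow C$. From there it needs only the universal properties of coslices. It describes $\ell$ explicitly as the morphism classified by $(i\circ r,\,j,\,\gamma\cdot(i\varepsilon u))$, and it gives the counit clause immediately. Via the unit $(x\eta,1_y)$, it also shows that invertibility of the unit of $r\dashv n$ passes to the unit of $\ell\dashv m$. That unit version is the form in which the proposition is actually used in the proof of Theorem~\ref{univhomodforM*}.
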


The coslice $u \small{\uparrow} C$ of the span $A\xla{u}C\xra{1_C}C$ is also called the
{\em lax colimit} or {\em collage} of the diagram consisting of the single morphism $C\xra{u}A$. In this case, we can also write the coslice square as a triangle \eqref{collf}.
 \begin{equation}\label{collf}
 \begin{aligned}
\xymatrix{
C \ar[rd]_{j_u}^(0.5){\phantom{a}}="1" \ar[rr]^{u}  && A \ar[ld]^{i_u}_(0.5){\phantom{a}}="2" \ar@{<=}"1";"2"^-{\gamma}
\\
& u \small{\uparrow} C 
}
\end{aligned}
\end{equation}

As bicategorical and dual versions of Corollaries 6 and 7 of \cite{8}, we have:

\begin{proposition}\label{collprop}
For any collage \eqref{collf}, there is a right adjoint $i_u^*$ for $i_u$ with invertible unit 
$1_B\cong i_u^*\circ i_u$ and 
invertible mate $u \cong i_u^*\circ j_u$ of $\gamma$.
Moreover, $u$ has a right adjoint if and only if $j_u$ does; in this case, the unit of the
latter adjunction is invertible.
\end{proposition}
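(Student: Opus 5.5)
Dualizing Proposition~\ref{sliceexact} handles the first part, and Proposition~\ref{horizstackprop} together with Proposition~\ref{anotherprop} handle the second.

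\textbf{First part.} A collage $u\small{\uparrow}C$ is the coslice of the span $A\xla{u}C\xra{1_C}C$, which is a slice in $\CK^{\mathrm{op}}$. Proposition~\ref{sliceexact} applies in $\CM^{\mathrm{op}}$ to the cospan $A\xra{u}C\xla{1_C}C$. Its hypothesis $f\dashv u'$ is satisfied with $f=1_C$ and $u'=1_C$, so the hypothesis is trivial.

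Its conclusion says that $t$ has a right adjoint with invertible counit. Here $t$ corresponds to $i_u$, so in $\CM^{\mathrm{op}}$ we get $i_u$ with an adjoint whose counit is invertible. Passing back to $\CM$ reverses 1-cells but not 2-cells. An adjunction $t\dashv v$ in $\CM^{\mathrm{op}}$ becomes $v\dashv i_u$ in $\CM$, i.e.\ a left adjoint of $i_u$. That is the wrong side.

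So I instead use the coslice universal property directly. The equivalence
\[
\CM(u\small{\uparrow}C,X)\simeq \CM(u,X)\small{\downarrow}\CM(1_C,X)
\]
describes a morphism out of the collage as a triple $(a,\ a\circ u\Ra c,\ c)$.
\begin{itemize}
\item Define $i_u^*:u\small{\uparrow}C\to A$ as corresponding to the triple $(1_A,\ u\Ra u\ (\text{identity}),\ u)$.
\item Then $i_u^*\circ i_u\cong 1_A$ and $i_u^*\circ j_u\cong u$. Under these isomorphisms, $i_u^*\gamma$ is the identity 2-cell; this gives the invertible mate.
\item For the counit $i_u\circ i_u^*\Ra 1$, use the comma description of 2-cells out of the collage. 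Both composites correspond to triples: $i_u i_u^*$ to $(i_u,\ i_u u\Ra i_u u,\ i_u u)$, and $1$ to $(i_u,\ \gamma,\ j_u)$. The counit is the pair $(1_{i_u},\gamma)$, which is compatible.
\item The triangle identities then reduce to identities in the comma category.
\end{itemize}
(The statement's ``$1_B$'' should read $1_A$.)

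\textbf{Second part.} Suppose $u\dashv u^*$, and apply Proposition~\ref{anotherprop}. Take the left-hand coslice to be the trivial collage of $1_C$. Its right-hand bipushout along $n=u$ exhibits $u\small{\uparrow}C$, by Proposition~\ref{horizstackprop}, with $m=j_u$. Proposition~\ref{anotherprop} gives left adjoints, so I apply it in $\CM^{\mathrm{co}}$, where left and right adjoints interchange. This yields $j_u\dashv j_u^*$.

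For the unit: the unit of $1_C\dashv 1_C$ is invertible, so Proposition~\ref{anotherprop} (counits in $\CM^{\mathrm{co}}$ become units) makes the unit of $j_u\dashv j_u^*$ invertible. Even without that, the construction gives $j_u^*$ corresponding to the triple $(u^*,\ u^*u\Ra 1_C\text{?})$. Since $j_u^*$ is a morphism into $C$, it is instead better described through the collage's universal property as having components $u^*$ on $A$ and $1_C$ on $C$, joined by the unit $\eta:1_C\Ra u^*u$. Then $j_u^*j_u\cong 1_C$ directly.

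Conversely, if $j_u\dashv j_u^*$, then composing adjunctions gives $u\cong i_u^*j_u\dashv j_u^*i_u$.

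\textbf{Main obstacle.} The hardest step is checking the triangle identities for $i_u\dashv i_u^*$ and $j_u\dashv j_u^*$ through the comma-category equivalence. I plan to verify them after representing: pass to $\mathrm{Cat}$-valued hom-functors and check them there.
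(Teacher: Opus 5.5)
Your first part is correct, and you are right that $1_B$ should read $1_A$. The second part, however, has a gap that cannot be closed, because as printed it is false. The obstruction is exactly the one you ran into. A morphism $u{\uparrow}C\to C$ is a triple $(a,\ a\circ u\Ra c,\ c)$, so a $j_u^*$ with components $u^*$ and $1_C$ needs a 2-cell $u^*u\Ra 1_C$. The unit of $u\dashv u^*$ goes the other way, $1_C\Ra u^*u$.

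Concretely, in $\mathrm{Cat}$ take $A=C=\mathbf{1}$ and $u=1$. The collage is $\mathbf{2}=\{0\to 1\}$, with $i_u$ picking $0$ and $j_u$ picking $1$. Since $\mathbf{2}(1,0)=\emptyset$, the object $1$ is not initial, so $j_u$ has no right adjoint although $u$ does.

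Your other steps fail correspondingly:
\begin{itemize}
\item In Proposition~\ref{anotherprop} the morphism $m$ is $1_C{\uparrow}C\to u{\uparrow}C$ with $m\circ j_{1_C}\cong j_u$; it is not $j_u$ itself.
\item Passing to $\CM^{\mathrm{co}}$ swaps the two legs of every coslice. So Proposition~\ref{horizstackprop} there produces $C{\uparrow}u$, not $u{\uparrow}C$; what you get is the true statement about $C{\uparrow}u$ recorded after \eqref{cosliceCandu}.
\item In your converse, $i_u^*$ is a right adjoint (of $i_u$). So there is no adjunction $i_u^*\dashv ?$ to compose with $j_u\dashv j_u^*$.
\end{itemize}

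What is true is the version with left adjoints and counits: $u$ has a left adjoint if and only if $j_u$ does, and then the counit $\ell j_u\Ra 1_C$ is invertible. This is also what the paper actually uses, in $\mathrm{co}$-dual form, in the remarks after \eqref{cosliceCandu} and in Lemma~\ref{ffofcofibsinM*}. Your comma-category method proves it:
\begin{itemize}
\item Given $r\dashv u$ with unit $\eta:1_A\Ra ur$ and counit $\varepsilon:ru\Ra 1_C$, let $\ell:u{\uparrow}C\to C$ correspond to the triple $(r,\ \varepsilon,\ 1_C)$. Then $\ell j_u\cong 1_C$ and $\ell i_u\cong r$.
\item The unit $1\Ra j_u\ell$ is the pair $(\gamma r\cdot i_u\eta,\ 1_{j_u})$. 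It is compatible because $j_u\varepsilon\cdot\gamma ru\cdot i_u\eta u=\gamma\cdot i_u(u\varepsilon\cdot\eta u)=\gamma$.
\item The triangle identities reduce to those of $r\dashv u$.
\item Conversely, $\ell\dashv j_u$ and $i_u\dashv i_u^*$ compose to $\ell i_u\dashv i_u^*j_u\cong u$.
\end{itemize}

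For the first part, the duality you were looking for is $\mathrm{coop}$, not $\mathrm{op}$. The collage $u{\uparrow}C$ is the slice in $\CM^{\mathrm{coop}}$ of $C\xra{1_C}C\xla{u}A$, with $t=i_u$ and $s=j_u$. Proposition~\ref{sliceexact} there, with $f=1_C$, gives exactly a right adjoint of $i_u$ in $\CM$ with invertible unit, together with $u\cong i_u^*j_u$. This is the route behind the paper's appeal to Corollaries 6 and 7 of its fibrations reference. Your direct construction is an equally valid, more hands-on substitute.
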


Suppose the bicategory $\CK$ admits slices. Each morphism $E\xra{p}A$ in $\CK$ induces a morphism
$E\small{\downarrow} E\xra{\bar{p}} A\small{\downarrow} p$ and isomorphisms $ps\cong s\bar{p}$
and $t \bar{p} \cong t$ which paste onto the 2-cell $\lambda_{A,p}$ to yield $p\lambda_{E,E}$
(where we often write an object in place of its identity morphism). 
The morphism $E\xra{p}A$ will be called a {\em fibration} (although it should properly be called a ``pseudofibration'') when $\bar{p}$ has a right adjoint with invertible counit in the bicategory $\CK$.
(In case $\CK$ is a 2-category it makes sense to ask that the counit should be an identity; we
then have a {\em strict fibration}.)
 \begin{equation}\label{horizstackex}
 \begin{aligned}
\xymatrix{
E\small{\downarrow} E \ar[d]_{s}^(0.5){\phantom{aaaa}}="1" 
\ar[rr]^{\bar{p}}  
&& A\small{\downarrow} E \ar[rr]^{t} 
     \ar[d]^{s}_(0.5){\phantom{aaaa}}="2" \ar@{=>}"1";"2"_-{\cong} 
     \ar@{}[d]^{\phantom{aaaa}}="3"
&& E \ar[d]^{p}_(0.5){\phantom{aaaa}}="4" \ar@{=>}"3";"4"^-{\lambda_{A,p}}
\\
E \ar[rr]_-{p} && A \ar[rr]_-{1_A} && A 
}
 \end{aligned}
\end{equation}

If the bicategory $\CK$ does not admit the appropriate slices then a morphism in $\CK$ is a {\em fibration}
when it is taken to a fibration by the bicategorical Yoneda embedding 
$\yo : \CK \to \mathrm{Hom}(\CK^{\mathrm{op}},\mathrm{Cat})$.  As in Item (1.3) of \cite{14},
we write $\mathrm{Hom}(\CA,\CX)$ for the bicategory of homomorphisms of bicategories (also called {\em pseudofunctors}) from $\CA$ to $\CX$, strong transformations (also called {\em pseudonatural transformations}), and modifications.  

Fibrations have the property that
pullbacks along them are automatically pseudopullbacks \cite{42} and hence bipullbacks
in $\mathrm{Cat}$.

\begin{proposition}\label{bipboffib}
If the diagram \eqref{bipb} is a bipullback and $p$ is a fibration in $\CK$ then $q$ is a fibration.
Moreover, if $f$ has a right adjoint $u$ then $r$ has a right adjoint $t$ with the mate $q\circ t\Ra u\circ p$ of $\theta$ invertible.
\begin{eqnarray}\label{bipb}
\begin{aligned}
\xymatrix{
F \ar[d]_{q}^(0.5){\phantom{aaaaa}}="1" \ar[rr]^{r}  && E \ar[d]^{p}_(0.5){\phantom{aaa}}="2" \ar@{=>}"1";"2"^{\theta}_-{\cong}
\\
C \ar[rr]_-{f} && A 
}
\end{aligned}
\end{eqnarray}
\end{proposition}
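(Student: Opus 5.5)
Because fibrations and right adjoints in $\CK$ are detected by the bicategorical Yoneda embedding $\yo : \CK \to \mathrm{Hom}(\CK^{\mathrm{op}},\mathrm{Cat})$, the plan is to reduce to the case $\CK = \mathrm{Hom}(\CK^{\mathrm{op}},\mathrm{Cat})$. Bipullbacks are computed representably, so $\yo$ preserves both bipullbacks and adjunctions. Bipullbacks, comma objects and fibrations in $\mathrm{Hom}(\CK^{\mathrm{op}},\mathrm{Cat})$ are computed pointwise up to equivalence. This reduces the first assertion to the case of $\mathrm{Cat}$, provided the pointwise cleavages can be chosen pseudonaturally. That is exactly the content of the pointwise adjoint $\bar{p}\dashv$(right adjoint with invertible counit) being a right adjoint in the functor bicategory; the standard fact that a pseudonatural transformation whose components have right adjoints acquires a lax right adjoint, which is pseudo when the mates are invertible, handles this.

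\textbf{The fibration claim.} I would avoid components and argue with slices directly. Since the square \eqref{bipb} is a bipullback and slices are defined by a representable universal property, the morphism $\bar{q} : F\small{\downarrow} F \to C\small{\downarrow} q$ is obtained from $\bar{p} : E\small{\downarrow} E \to A\small{\downarrow} p$ by pulling back. More precisely:
\begin{itemize}
\item $F\small{\downarrow} F$ is the bipullback of $E\small{\downarrow} E \to E\times E$ along $r\times r$, and similarly for $C\small{\downarrow} q$;
\item so $\bar{q}$ is a bipullback of $\bar{p}$ over a cube of bipullbacks.
\end{itemize}
A right adjoint $\rho$ of $\bar{p}$ with invertible counit is then transported to $\bar{q}$ by the universal property: $\rho$ commutes (up to invertible 2-cells) with the projections to the base cospan. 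The key verification is that $s\rho\cong s$ over $E$ and $t\rho \cong t$ over $E$, so that $\rho$ is a morphism over the cospan $A\small{\downarrow}A\to\dots$. One obtains these from the counit being invertible and from $t\bar{p}\cong t$. The induced morphism into the bipullback is right adjoint to $\bar{q}$, and its counit is invertible because it is detected by the jointly conservative projections of the bipullback. This bookkeeping of the cube of bipullbacks is the step I expect to be the main obstacle. If it gets too heavy, I would fall back to the $\mathrm{Cat}$ check: there, the bipullback may be replaced by the strict pullback, since $p$ is a fibration (\cite{42}), and pullbacks of fibrations are fibrations with the obvious chosen cartesian lifts.

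\textbf{The adjoint claim.} Given $f\dashv u$ with unit $\eta$ and counit $\varepsilon$, define $t : E\to F$ via the bipullback property. Its components are
\begin{itemize}
\item the cartesian lift $\kappa : e'\to e$ of $\varepsilon_{pe} : fupe \to pe$ along $p$;
\item the morphism $upe$ into $C$,
\end{itemize}
together with an invertible 2-cell $f\circ up \cong p\circ e'$. Representably, $e'$ is obtained by applying the fibration structure to the object of $A\small{\downarrow} p$ given by $(fup,\varepsilon p, 1_E)$. By construction $q\circ t\cong u\circ p$, and this isomorphism is the mate of $\theta$, which is therefore invertible. The counit $rt\Rightarrow 1_E$ is the cartesian lift $\kappa$. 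The unit $1_F\Rightarrow tr$ is built from $\eta q$ together with the universal property of cartesian morphisms, which factors $r$ through the lift of $\varepsilon$. The triangle identities follow from those of $f\dashv u$ and the uniqueness of factorizations through cartesian morphisms.

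This argument can be phrased entirely with the adjunction $\bar{p}\dashv\rho$, so it holds in any $\CK$, or it can be reduced to $\mathrm{Cat}$ via $\yo$ as in the first step.
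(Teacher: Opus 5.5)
The paper states this proposition without proof (it only precedes it with a remark about pullbacks along fibrations), so your proposal has to stand on its own. Your overall strategy is sound. The reduction via the Yoneda embedding to $\mathrm{Cat}$ is legitimate. Your construction of $t$ is also correct: $t$ comes from the cartesian lift of $\varepsilon p$, the unit is assembled from $\eta q$, and the mate of $\theta$ is invertible by the triangle identity $u\varepsilon\cdot\eta u=1$.

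\textbf{The direct argument fails.} The ``direct'' argument you give for the fibration claim breaks at its first bullet: $F\downarrow F$ is \emph{not} the bipullback of $E\downarrow E\to E\times E$ along $r\times r$. In $\mathrm{Cat}$, an arrow of that bipullback is an arbitrary arrow $rx\to ry$ of $E$. An arrow of $F$ is such an arrow together with a compatible arrow of $C$. The two agree only when $r$ is fully faithful. For a counterexample, take $p=1_A$ and $f$ an object $a:1\to A$. Then $F\simeq 1$ and $F\downarrow F\simeq 1$, but the bipullback of $A\downarrow A\to A\times A$ along $(a,a)$ is the discrete category $A(a,a)$.

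Your ``key verification'' $s\rho\cong s$ is also ill-typed: $s\rho$ lands in $E$, while $s : A\downarrow p\to A$. What actually holds is $ps\rho\cong s$ and $t\rho\cong t$.

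The correct cube is $F\downarrow F\simeq (C\downarrow C)\times_{A\downarrow A}(E\downarrow E)$ and $C\downarrow q\simeq (C\downarrow C)\times_{A\downarrow A}(A\downarrow p)$, under which $\bar q$ corresponds to $1\times\bar p$. Write $\pi : A\downarrow p\to A\downarrow A$ for the canonical morphism and $\eta',\varepsilon'$ for the unit and counit of $\bar p\dashv\rho$. What must be checked is that this adjunction lies over $A\downarrow A$:
\begin{itemize}
\item $\pi\varepsilon' : \pi\bar p\rho\cong\pi$ exhibits $\rho$ as a morphism over $A\downarrow A$;
\item $\pi\bar p\eta'$ is the inverse of $\pi\varepsilon'\bar p$, because $\varepsilon'\bar p\cdot\bar p\eta'=1$ and $\varepsilon'$ is invertible.
\end{itemize}
Then $1\times\rho$ is right adjoint to $\bar q$ with invertible counit.

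\textbf{The fallback works, with two additions.} Your fallback through $\mathrm{Cat}$ is correct once you add the following.

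First, you must actually verify for $q$ the proviso you flag yourself. The pointwise right adjoints of $\CK(X,\bar q)$ assemble pseudonaturally only if each restriction $\CK(k,F)$ preserves $q$-cartesian arrows. This holds because in the (iso-comma) pullback an arrow is $q$-cartesian exactly when its image under $r$ is $p$-cartesian. Restriction preserves $p$-cartesian arrows, since up to isomorphism they arise by whiskering $\lambda_{E,E}$ with $\rho$.

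Second, the fibrations here are only pseudofibrations (the counit is invertible, not an identity), and for these the strict pullback need not be a bipullback. For example, $p : 1\to A$ picking $a$ in the indiscrete category on $\{a,a'\}$ is an equivalence, hence a fibration, yet its strict pullback along $a'$ is empty. So either replace $p$ first by an equivalent strict (Grothendieck) fibration, or run the $\mathrm{Cat}$ argument directly on the iso-comma category. There the lifting argument and your formula $t(e)=(upe,\,e',\,fupe\cong pe')$ go through unchanged.
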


An {\em opfibration} is a fibration in $\CK^{\mathrm{co}}$. 
A {\em cofibration} is a fibration in $\CK^{\mathrm{op}}$.
A {\em coopfibration} is a fibration in $\CK^{\mathrm{coop}}$.

In diagram \eqref{slice}, the morphism $s$ is a fibration and $t$ is an opfibration.
In diagram \eqref{coslice}, the morphism $i$ is a cofibration and $j$ is a coopfibration.

Of course Proposition~\ref{bipboffib} has three duals.
\begin{eqnarray}\label{cosliceCandu}
\begin{aligned}
\xymatrix{
C \ar[d]_{u}^(0.5){\phantom{aaaaa}}="1" \ar[rr]^{1_C}  && C \ar[d]^{i_u}_(0.5){\phantom{aaa}}="2" \ar@{<=}"1";"2"^{\gamma}
\\
A \ar[rr]_-{j_u} && C\small{\uparrow}u 
}
\end{aligned}
\end{eqnarray}
In the coslice \eqref{cosliceCandu}, by a dual of Proposition~\ref{collprop}, $j_u$ has a left adjoint $j_{u !}$ with counit $j_{u !}\circ j_u \Ra 1_A$ invertible which, incidentally means that, if $j_u$ also had a right adjoint, the unit would be invertible.  Moreover, if $u$ has a right adjoint $u^*$
then $i_u$ has a right adjoint $i_u^*$ with invertible unit $1_C\Ra i_u^*\circ i_u$ invertible
and the mate $ j_{u !}\circ i_u\Ra u$ of $\gamma$ invertible.

If $\CK$ admits coslices, a morphism $C\xra{u}A$ is a cofibration if and only if the canonical
morphism $C\small{\uparrow}u\xra{\bar{u}} A\small{\uparrow}A$ induced by $\bar{u}$ has
a left adjoint $\ell$ with invertible counit $\ell \circ \bar{u}\cong 1_{C\small{\uparrow}u}$. 
\begin{eqnarray}\label{cofibadjunction} 
\xymatrix @R-3mm {
A\small{\uparrow}A \ar@<1.5ex>[rr]^{\ell} \ar@{}[rr]|-{\perp} && C\small{\uparrow}u \ar@<1.5ex>[ll]^{\bar{u}} \ar@<1.5ex>[rr]^{\ell\circ \bar{u}} \ar@{}[rr]|-{\Downarrow\cong} \ar@<-1.5ex>[rr]_{1_{C\small{\uparrow}u}} &&  C\small{\uparrow}u
}\end{eqnarray}
 
 \begin{lemma}\label{ffofcofibsinM*} For any bicategory $\CM$, suppose coslices of the form \eqref{cosliceCandu} exist in $\CM_*$. If $C\xra{u} A$ is a cofibration in $\CM_*$ then the unit of $u\dashv u^*$ is invertible.    
\end{lemma}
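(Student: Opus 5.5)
The plan is to express $u$, up to composition with maps whose units are already known to be invertible, in terms of the morphism $\bar{u}$ from the cofibration criterion \eqref{cofibadjunction}, and then cancel. Throughout, a morphism of $\CM_*$ is a map of $\CM$, so it has a right adjoint in $\CM$. Two standard facts will be used. First, if $f\dashv f^*$ and $g\dashv g^*$ are composable, then $gf\dashv f^*g^*$, and the unit of the composite is the pasting $1\Ra f^*f\Ra f^*g^*gf$ of the two units. Second, Lemma~\ref{adjff} lets us transfer invertibility from the counit of a left adjoint to the unit of a right adjoint.

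First I form the coslices $C\small{\uparrow}u$ (as in \eqref{cosliceCandu}) and $A\small{\uparrow}A$ in $\CM_*$. I then record that the induced morphism $\bar{u}\colon C\small{\uparrow}u\to A\small{\uparrow}A$ is compatible with the coprojections. By construction of $\bar{u}$ from the universal property of the coslice $C\small{\uparrow}u$, there is an invertible 2-cell $\bar{u}\circ i_u\cong i_A\circ u$, where $i_A\colon A\to A\small{\uparrow}A$ is the coprojection playing the role of $i$ for the span $A\xla{1}A\xra{1}A$. I expect checking this compatibility carefully, namely which coprojection matches which and in which direction $\gamma$ points, to be the main bookkeeping obstacle. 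If the naive choice of coprojection is wrong, I would instead use $j$ and the dual statements.

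Next I collect three invertible units.

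\begin{compactenum}[(a)]
\item Since $u$ is a cofibration, $\bar{u}$ has a left adjoint $\ell$ with invertible counit $\ell\circ\bar{u}\cong 1$. Since $\bar{u}$ is a map, it also has a right adjoint $\bar{u}^*$, and Lemma~\ref{adjff} gives that the unit $1\Ra\bar{u}^*\bar{u}$ is invertible.
\item By the dual of Proposition~\ref{collprop} discussed after \eqref{cosliceCandu}, since $u$ has a right adjoint, $i_u$ has a right adjoint $i_u^*$ with invertible unit.
\item Applying the same fact to the identity $1_A$, which is trivially a map, $i_A$ has a right adjoint $i_A^*$ with invertible unit.
\end{compactenum}

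Hence the composite $\bar{u}\circ i_u$ has right adjoint $i_u^*\bar{u}^*$, and its unit is invertible, being a pasting of invertible units. Transporting along $\bar{u}\circ i_u\cong i_A\circ u$, the adjunction $i_A u\dashv u^* i_A^*$ also has invertible unit. Here I use that the units of two adjunctions with isomorphic left adjoints correspond under the induced isomorphism of right adjoints.

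Finally, I write the unit of $i_Au\dashv u^*i_A^*$ as the composite
\[
1_C\Ra u^*u\Ra u^*i_A^*i_Au ,
\]
where the first arrow is the unit $\eta$ of $u\dashv u^*$ and the second is $u^*$ whiskered with the invertible unit of $i_A\dashv i_A^*$, whiskered by $u$. The composite is invertible and its second factor is invertible, so $\eta$ is invertible, as required.
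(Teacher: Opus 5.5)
Your argument is, step for step, the paper's proof. It uses (a), (b), (c), the compatibility $\bar u\circ i_u\cong i_A\circ u$ (your orientation is the right one, dual to $ps\cong s\bar p$), and the final cancellation, which you spell out more explicitly than the paper does. The gap is in step (b), and the paper's proof has the same gap. The remark after \eqref{cosliceCandu} builds $i_u^*$ as the morphism $C\uparrow u\to C$ induced, through the universal property of the coslice, by the triple $(1_C,u^*,\eta)$. That construction requires $u^*$ to be a morphism of the bicategory in which $C\uparrow u$ is a coslice. Here the coslice is only assumed to exist in $\CM_*$, while $u^*$ lives in $\CM$ and is in general not a map. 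So ``since $u$ has a right adjoint'' does not license (b).

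In fact (b) cannot be derived from the stated hypotheses. Take $\CM=\mathrm{Rel}$. Then $\CM_*$ is $\mathrm{Set}$ regarded as a locally discrete 2-category, since an inclusion between graphs of functions is an equality. Coslices in $\CM_*$ are therefore pushouts, so $C\uparrow u=A$ with $i_u=u$ and $j_u=1_A$; likewise $A\uparrow A=A$ and $\bar u=1_A$. Hence every function is a cofibration in $\CM_*$, and (a), (c) and the compatibility all hold trivially. Yet the unit $1_C\subseteq u^{\circ}\circ u$ (the kernel relation of $u$) is invertible only when $u$ is injective. In this example step (b) asserts exactly the conclusion, since $i_u=u$.

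To close the gap, add the hypothesis that $C\uparrow u$ is also a coslice in $\CM$, i.e.\ that it is preserved by $\mathcal{I}:\CM_*\to\CM$. This holds in the module-equipment setting by Proposition~\ref{uupCandCupu}. The universal property in $\CM$, applied to $(1_C,u^*,\eta)$, then gives $r$ with $r\circ i_u\cong 1_C$ and $r\circ j_u\cong u^*$. The two-dimensional part, applied to $1_{i_u}$ and to the mate $i_u\circ u^*\Rightarrow j_u$ of $\gamma$, gives a counit making $i_u\dashv r$ with invertible unit. Your remaining steps then go through unchanged. Step (c) was never at risk, because it only uses the right adjoint $1_A$ of $1_A$, which lies in $\CM_*$.
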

\begin{proof} We are in the situation of \eqref{cofibadjunction}: that is,
$C\small{\uparrow}u\xra{\bar{u}} A\small{\uparrow}A$ has a left adjoint with invertible counit.
However, $\bar{u}$ is in $\CM_*$ and so has a right adjoint for which, by Lemma~\ref{adjff},
the unit is invertible. From the remarks after diagram \eqref{cosliceCandu}, both 
$C\xra{i_u} C\small{\uparrow}u$ and $A\xra{i_A} A\small{\uparrow}A$ have right adjoints
with invertible units. Part of the definition of $\bar{u}$ is that we have an 
isomorphism $i_A\circ u \cong \bar{u}\circ i_u$ which implies that the unit for $u\dashv u^*$ is also invertible.   
\end{proof}

\section{The bicategory of modules}

Here we remind the reader of some results essentially from \cite{14, 22, 85, 46}.

So that we are in the familiar setting of Kelly's book \cite{KellyBook}, let $\CV$ denote a symmetric monoidal closed category which is compatibly locally presentable. We write
$\CV\text{-Cat}$ for the 2-category of (small) $\CV$-categories; the 1-morphisms are $\CV$-functors and the 2-morphisms are $\CV$-natural transformations.

We write $\CV\text{-Mod}$ for the bicategory of (small) $\CV$-categories; 
the hom-category $\CV\text{-Mod}(B,A)$ is the underlying category of the 
$\CV$-functor $\CV$-category $[A^{\mathrm{op}}\ot B,\CV]$ whose objects
we call ($\CV$-){\em modules from $B$ to $A$}. Composition of modules
$C\xra{n} B\xra{m} A$ is defined by the coend
\begin{eqnarray}
(m\circ n)(a,c) = \int^b n(b,c)\ot m(a,b) \ .
\end{eqnarray}
 
 There is an inclusion pseudofunctor 
 \begin{eqnarray}\label{lowstar}
(-)_* : \CV\text{-Cat}\to \CV\text{-Mod}
\end{eqnarray}
 which is the identity on objects and locally fully faithful on homs; it takes the $\CV$-functor $B\xra{f} A$ to the module $B\xra{f_*} A$ 
defined by 
\begin{eqnarray}
f_*(a,b) = A(a,fb) \ .
\end{eqnarray}
There is an adjunction $f_*\dashv f^*$ in the bicategory $\CV\text{-Mod}$ where
\begin{eqnarray}\label{upstar}
f^*(b,a) = A(fb,a) \ .
\end{eqnarray}
The inclusion $(-)_* : \CV\text{-Cat}\to \CV\text{-Mod}$, apart from a size problem,
has a right pseudo-adjoint $B\mapsto \mathcal{P}B = [B^{\mathrm{op}},\CV]$ which
suffices, in the usual way, to prove:
\begin{proposition}\label{lower*cocts}
The pseudofunctor \eqref{lowstar} preserves bicategorical colimits.
\end{proposition}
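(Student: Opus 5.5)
The plan is the standard argument that a left biadjoint preserves bicolimits, with a size caveat to handle. The first step is to make the right pseudo-adjoint precise. For a $\CV$-category $A$ and a small $\CV$-category $B$, the aim is an equivalence of categories
\begin{eqnarray*}
\CV\text{-Mod}(B,A)\simeq \CV\text{-Cat}(B,\mathcal{P}A) \ ,
\end{eqnarray*}
pseudonatural in $B\in \CV\text{-Cat}$ (acting on the left through $(-)_*$) and in $A$. This comes from currying: a module from $B$ to $A$ is a $\CV$-functor $A^{\mathrm{op}}\ot B\to \CV$, equivalently a $\CV$-functor $B\to [A^{\mathrm{op}},\CV]=\mathcal{P}A$. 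For a $\CV$-functor $g\colon C\to B$, composing $m$ with $g_*$ gives
\begin{eqnarray*}
(m\circ g_*)(a,c)=\int^b B(b,gc)\ot m(a,b)\cong m(a,gc) \ ,
\end{eqnarray*}
by the Yoneda lemma. Under the equivalence this corresponds to precomposing the curried $\CV$-functor with $g$, and on $2$-cells the correspondence matches module morphisms with $\CV$-natural transformations. That gives pseudonaturality in $B$, and it is the only naturality needed.

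The size problem is that $\mathcal{P}A$ is generally not small. I would handle it by working in $\CV\text{-CAT}$, the $2$-category of possibly large $\CV$-categories, which contains $\CV\text{-Cat}$ as a full sub-$2$-category. The equivalence above is natural in $B$ ranging over $\CV\text{-Cat}$, so the Yoneda-type argument only ever evaluates $\CV\text{-CAT}(-,\mathcal{P}A)$ on small $\CV$-categories.

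Now let $D\colon \CD\to\CV\text{-Cat}$ be a diagram, weighted if needed, with bicolimit $L$ in $\CV\text{-Cat}$. The bicolimit property in $\CV\text{-Cat}$ says that $\CV\text{-Cat}(L,X)$ is the bilimit of the $\CV\text{-Cat}(D-,X)$ for every small $X$. I must show that $\CV\text{-Mod}(L_*,A)$ is the bilimit of the $\CV\text{-Mod}(D-_*,A)$ for every $A$. Transporting along the equivalence, this becomes the statement that $\CV\text{-CAT}(L,\mathcal{P}A)$ is the bilimit of the $\CV\text{-CAT}(D-,\mathcal{P}A)$, where the test object $\mathcal{P}A$ is now large.

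I expect this to be the main obstacle: knowing that $L$ is a bicolimit in $\CV\text{-Cat}$ does not immediately give the universal property against large test objects. I would resolve it by computing bicolimits in $\CV\text{-Cat}$ concretely: bicoproducts are disjoint unions, coinserters and coequifiers are presented by generators and relations, and these constructions make sense and have the same universal property in $\CV\text{-CAT}$, so the inclusion $\CV\text{-Cat}\to\CV\text{-CAT}$ preserves them. Alternatively, $\mathcal{P}A$ is a small-filtered union of small full sub-$\CV$-categories closed enough to receive the relevant functors from the small $L$ and $D d$, reducing to small test objects. Both routes rest on local presentability of $\CV$.

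Once this is settled, pseudonaturality in $A$ shows that the cocone $(D-)_*\to L_*$ is a bicolimit cocone in $\CV\text{-Mod}$. This is exactly the statement that \eqref{lowstar} preserves bicategorical colimits.
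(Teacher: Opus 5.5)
Your proposal is correct and is the paper's own argument. The paper just notes that $(-)_*$ has, apart from size, the right pseudo-adjoint $A\mapsto \mathcal{P}A=[A^{\mathrm{op}},\CV]$ and concludes ``in the usual way''; that is exactly your currying equivalence $\CV\text{-Mod}(B,A)\simeq \CV\text{-CAT}(B,\mathcal{P}A)$, pseudonatural in $B$. Your explicit treatment of the size problem, which the paper passes over, is sound: your second route settles it, because every object and morphism of the relevant bilimit of hom-categories factors through a small full sub-$\CV$-category of $\mathcal{P}A$, and that argument in fact needs no local presentability of $\CV$.
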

In particular, $(-)_*$ preserves coslices so a dual of Proposition~\ref{sliceexact} yields: 
\begin{corollary}\label{cosliceexact}
If the diagram \eqref{coslice} is a coslice in $\CV\text{-}\mathrm{Cat}$ then the mate
$\bar{\gamma}$ of the image of the 2-morphism $\gamma$ under the pseudofunctor \eqref{lowstar} is invertible.
\begin{eqnarray*}
\begin{aligned}
\xymatrix{
B \ar[d]_{v^*}^(0.5){\phantom{aaaaa}}="1" \ar[rr]^{j_*}  && u\small{\uparrow} v \ar[d]^{i^*}_(0.5){\phantom{aaaaa}}="2" \ar@{=>}"1";"2"^-{\bar{\gamma}}_-{\cong}
\\
C \ar[rr]_-{u_*} && A 
}
\end{aligned}
\end{eqnarray*}
\end{corollary}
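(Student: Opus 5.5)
Since $(-)_*$ preserves bicategorical colimits (Proposition~\ref{lower*cocts}) and coslices are bicategorical colimits (weighted by the walking arrow), the image of the coslice \eqref{coslice} under \eqref{lowstar} is a coslice in $\CV\text{-}\mathrm{Mod}$:
\begin{eqnarray*}
\begin{aligned}
\xymatrix{
C \ar[d]_{v_*}^(0.5){\phantom{aaaaa}}="1" \ar[rr]^{u_*}  && A \ar[d]^{i_*}_(0.5){\phantom{aaaaa}}="2" \ar@{<=}"1";"2"^-{\gamma_*}
\\
B \ar[rr]_-{j_*} && u\small{\uparrow} v
}
\end{aligned}
\end{eqnarray*}
This diagram is a slice in $\CV\text{-}\mathrm{Mod}^{\mathrm{op}}$, and I will apply Proposition~\ref{sliceexact} there. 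The point to track is how adjunctions and mates transform under $(-)^{\mathrm{op}}$. Reversing 1-morphisms but not 2-morphisms turns a left adjoint into a right adjoint. So in $\CV\text{-}\mathrm{Mod}^{\mathrm{op}}$ the morphism $u_*$ (viewed as $A\to C$) has the left adjoint $u^*$. This is the hypothesis ``$f\dashv u$'' of Proposition~\ref{sliceexact} with $f=u^*$.

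In detail, in $\CK=\CV\text{-}\mathrm{Mod}^{\mathrm{op}}$ the cospan is $A\xra{u_*}C\xla{v_*}B$. The slice object is $u\small{\uparrow}v$, with projections $s=i_*$ and $t=j_*$, and the 2-cell $\lambda$ is $\gamma_*$. Matching with \eqref{slice}, the morphism playing the role of $f$ is $u_*$, which has a right adjoint in $\CK$ because $u^*\dashv u_*$ in $\CK$ (since $u_*\dashv u^*$ in $\CV\text{-}\mathrm{Mod}$). Proposition~\ref{sliceexact} then says two things. First, $t=j_*$ has a right adjoint in $\CK$, namely $j^*$. Second, the mate $s\circ v'\Ra u'\circ g$ is invertible, where $v'$ is the right adjoint of $t$ and $u'$ the right adjoint of $f$ in $\CK$. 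Translated back to $\CV\text{-}\mathrm{Mod}$, this mate is a 2-cell between composites of $i_*$, $j^*$, $u^*$ and $v_*$.

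That is not yet the mate displayed in the corollary, which involves $i^*$ and $v^*$ instead. The main obstacle is therefore the bookkeeping of which mate is which. If the direct translation gives the ``other'' mate, I would use the standard fact that for a square of left adjoints, the mate obtained by taking right adjoints of the vertical sides is invertible if and only if the mate obtained from the horizontal sides is invertible. Both mates are determined by the same square, so this is the usual correspondence $\bar{\gamma}$ versus the conjugate 2-cell. Concretely, this is Proposition~\ref{sliceexact} applied with the roles of the two legs exchanged, i.e.\ to the transposed slice in $\CV\text{-}\mathrm{Mod}^{\mathrm{coop}}$.

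As a fallback, the statement can be checked directly using the explicit description of the coslice in $\CV\text{-}\mathrm{Cat}$. There $u\small{\uparrow}v$ is the collage with objects those of $A$ and $B$, and homs $(u\small{\uparrow}v)(j b,i a)=\int^c B(b,vc)\ot A(uc,a)$, together with the homs of $A$ and $B$ and nothing from $i a$ to $j b$. Then $(i^*\circ j_*)(a,b)=(u\small{\uparrow}v)(ia,jb)$ and $(u_*\circ v^*)(a,b)=\int^c B(vc,b)\ot A(a,uc)$, with the orientation of homs fixed by that of the collage. The component of $\bar\gamma$ is the evident comparison between these two, and it is an isomorphism by the Yoneda/coend definition of the collage hom. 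This confirms invertibility regardless of the mate bookkeeping.
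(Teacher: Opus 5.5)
Your first step matches the paper's: by Proposition~\ref{lower*cocts} the image of \eqref{coslice} is a coslice in $\CV\text{-}\mathrm{Mod}$. The problem is the dual you apply next. In $\CK=\CV\text{-}\mathrm{Mod}^{\mathrm{op}}$ the adjunction $u_*\dashv u^*$ becomes $u^*\dashv u_*$, so there $u_*$ has a \emph{left} adjoint, not a right one. Your sentence ``$u_*$ has a right adjoint in $\CK$ because $u^*\dashv u_*$ in $\CK$'' contradicts itself. The hypothesis $f\dashv u$ of Proposition~\ref{sliceexact} with $f=u_*$ would require $u_*$ to have a left adjoint in $\CV\text{-}\mathrm{Mod}$, which fails in general. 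Likewise $j^*$ is left, not right, adjoint to $j_*$ in $\CK$. The conclusion you read off, an invertible $j^*\circ i_*\Rightarrow v_*\circ u^*$, is in fact false. We have $(j^*\circ i_*)(b,a)\cong(u\uparrow v)(jb,ia)=0$. But $(v_*\circ u^*)(b,a)=\int^c A(uc,a)\otimes B(b,vc)$ is already $1$ for $\CV=\mathrm{Set}$, $A=B=C=1$, $u=v=1$, where $u\uparrow v$ is the walking arrow. Your rescue via the ``standard fact'' does not exist either. Since $\gamma$ is not invertible, the only mate of $\gamma_*$ that can be built from right adjoints is the one using $i_*\dashv i^*$ and $v_*\dashv v^*$, which is $\bar\gamma$ itself. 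There is no ``horizontal'' mate to trade it for.

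The idea you mention only in passing, the transposed slice in $\mathrm{coop}$, is the correct argument, and it is what the paper's ``dual of Proposition~\ref{sliceexact}'' means. The coslice in $\CV\text{-}\mathrm{Mod}$ of $A\xla{u_*}C\xra{v_*}B$ is a slice in $\CV\text{-}\mathrm{Mod}^{\mathrm{coop}}$ with the legs exchanged. In the notation of \eqref{slice}, $f=v_*$, $g=u_*$, $s=j_*$, $t=i_*$ and $\lambda=\gamma_*$. Passing to $\mathrm{coop}$ preserves the handedness of adjunctions, so $v_*\dashv v^*$ is exactly the required hypothesis. Proposition~\ref{sliceexact} then says that $i_*$ has a right adjoint (necessarily $i^*$) with invertible unit, and that the mate of $\lambda$ is invertible. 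That mate translates back to precisely $\bar\gamma : u_*\circ v^*\Rightarrow i^*\circ j_*$.

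Your fallback computation would be a legitimate hands-on alternative, but as written the collage is oriented backwards. The components $\gamma_c : iuc\to jvc$ go from $A$ to $B$, so $(u\uparrow v)(ia,jb)=\int^c B(vc,b)\otimes A(a,uc)$ and $(u\uparrow v)(jb,ia)=0$. With your orientation, $(i^*\circ j_*)(a,b)\cong(u\uparrow v)(ia,jb)$ would be $0$. After correcting this, you still need two checks. First, that this $\CV$-category really is the coslice in $\CV\text{-}\mathrm{Cat}$; it is the collage of $u_*\circ v^*$, as in the proof of Proposition~\ref{uupCandCupu}. Second, that the Yoneda comparison is the mate $\bar\gamma$. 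With those, the direct argument proves the corollary. The paper's route avoids the explicit construction entirely.
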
 

For a module $B\xra{m}A$, the coslice $m\small{\uparrow}B$ in $\CV\text{-Mod}$ is the $\CV$-category constructed in \cite{14} as follows. The set of objects is the disjoint union of the set of objects of $A$ and the set of objects of $B$. The hom $\CV$-objects of $m\small{\uparrow}B$ are defined by
\begin{eqnarray}\label{cosliceconstruction}
\begin{aligned}
(m\small{\uparrow}B)(x,y) = \left\{
\begin{array}{rl}
A(x,y) & \text{if } x, y \in A \\
B(x,y) & \text{if } x, y \in B \\
m(x,y) & \text{if } x \in A \text{ and } y\in B \\
 0 \phantom{aaa} & \text{otherwise } .
\end{array} \right.
\end{aligned}
\end{eqnarray}
Composition is obtained from the compositions in $A$ and $B$ and their actions on $m$.
We have fully faithful $\CV$-functors $A\xra{i_m}m\small{\uparrow}B$ and
$B\xra{j_m}m\small{\uparrow}B$ given by inclusion and a diagram
 \begin{equation}\label{collm}
 \begin{aligned}
\xymatrix{
B \ar[rd]_{j_{m *}}^(0.5){\phantom{a}}="1" \ar[rr]^{m}  && A \ar[ld]^{i_{m * }}_(0.5){\phantom{a}}="2" \ar@{<=}"1";"2"^-{\gamma}
\\
& m \small{\uparrow} B 
}
\end{aligned}
\end{equation}
where $\gamma$ is induced by the right action of $A$ on $m$.
If $m$ is of the form $f_*$ for a $\CV$-functor $B\xra{f}A$ then the right adjoint
$i_m^*$ of $i_{m*}$ is also in the image of $(-)_* : \CV\text{-Cat}\to \CV\text{-Mod}$ as expected
since $(-)_*$ preserves coslices.  

A full sub-$\CV$-category $A$ of a $\CV$-category $X$ is called a {\em sieve} (``crible'' in French) when, for all $x\in X$, if $x\notin A$ then $X(x,a)=0$ for all $a\in A$. 
From Gray \cite{GrayLaJolla} and Section 6 of \cite{14}, we have the following considerations.
The inclusions $A \hookrightarrow X$ of sieves are the strict cofibrations in $\CV\text{-}\mathrm{Cat}$. 
With such a sieve, let $B$ be the full sub-$\CV$-category of $X$ consisting of 
those objects $x\notin A$. 
Then the inclusions $A\xra{i}X\xla{j}B$ provide a collage
for the module $i^*\circ j_* = X(i,j) : B\to A$ in $\CV\text{-}\mathrm{Mod}$.
An {\em opsieve} is a sieve in $\CV\text{-}\mathrm{Cat}^{\mathrm{co}}$; in the last sentence, $j$ is such. 

Suppose we have a span $Y\xla{f}A\xra{i}X$ in $\CV\text{-}\mathrm{Cat}$ with $i$
the inclusion of a sieve. There is a commutative square 
\begin{eqnarray}\label{posieve}
\begin{aligned}
\xymatrix{
A \ar[d]_{f}^(0.5){\phantom{aaaaa}}="1" \ar[rr]^{i}  && X \ar[d]^{h}_(0.5){\phantom{aaaaa}}="2" \ar@{=}"1";"2"_-{ }
\\
Y \ar[rr]_-{k} && Q 
}
\end{aligned}
\end{eqnarray}  
in $\CV\text{-}\mathrm{Cat}$ defined as follows. The objects of $Q$ are those in the
disjoint union of the objects of $X$ not in $A$ and the objects of $Y$. 
The hom $\CV$-objects of $Q$ are defined by
\begin{eqnarray}\label{cosliceconstruction}
\begin{aligned}
Q(w,z) = \left\{
\begin{array}{rl}
Y(w,z) \phantom{aaaaaa} & \text{if } w, z \in Y \\
\int^{a\in A}X(a,z)\ot Y(w,fa) & \text{if } z\in X\setminus A \ \text{ and } w\in Y \\
 0 \phantom{aaaaaaaa} & \text{otherwise } .
\end{array} \right.
\end{aligned}
\end{eqnarray}
The $\CV$-functor $k$ is the inclusion while $hx$ is $x$ when $x\notin A$ and is $fx$
when $x\in A$. This construction has the following well-known and easily checked properties.

\begin{proposition}\label{po-sieve-cosieve}
The commutative square \eqref{posieve}, where $i$ is the inclusion of a sieve (respectively, fully faithful), is a pushout 
(and hence by \cite{42} a pseudopushout) in $\CV\text{-}\mathrm{Cat}$. Moreover, $k$ is also the inclusion of a sieve (respectively, fully faithful) and the canonical 
morphism $f_*\circ i^*\to k^*\circ h_*$ in $\CV\text{-}\mathrm{Mod}$ is invertible.\end{proposition}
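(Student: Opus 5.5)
The plan is to verify the claims directly from the explicit description of $Q$, treating the sieve case first and then indicating the modifications for a general fully faithful $i$. The displayed formula for $Q(w,z)$ should be read as also including $Q(x,x') = X(x,x')$ when $x,x'\in X\setminus A$; the formula as printed omits this case, and I take it to be intended.

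\emph{Step 1: $Q$ is a $\CV$-category and $h,k$ are $\CV$-functors.} Composition on $Y$ and on $X\setminus A$ is inherited from $Y$ and $X$. For the mixed hom $Q(w,z)=\int^{a}X(a,z)\ot Y(w,fa)$, the left action of $X(z,z')$ comes from composition in $X$, and the right action of $Y(w',w)$ comes from composition in $Y$. Associativity and unit laws follow from those of $X$ and $Y$ and from naturality of coends. The sieve condition $X(x,a)=0$ for $x\notin A$ guarantees that no composite lands in a hom that was declared $0$. The functor $k$ is the inclusion. On homs, $h$ is:
\begin{itemize}
\item the identity on $X(x,x')$ for $x,x'\notin A$;
\item $f$ on $A(a,a')$;
\item on $X(a,x')$, the coprojection at $a$ composed with $1\otimes(\text{identity of } fa)$;
\item zero on the zero homs.
\end{itemize}
Functoriality of $h$ then comes from the coend relations. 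By construction $h\circ i=k\circ f$ strictly. Since $Q(z,w)=0$ whenever $z\notin Y$ and $w\in Y$, the functor $k$ is the inclusion of a sieve.

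\emph{Step 2: universal property.} Suppose $X\xra{g}Z\xla{l}Y$ satisfy $g\circ i=l\circ f$. Define $Q\to Z$ to be $l$ on $Y$ and $g$ on $X\setminus A$. On a mixed hom, use the universal property of the coend applied to
\[
X(a,z)\ot Y(w,fa)\to Z(ga,gz)\ot Z(lw,lfa)\to Z(lw,gz),
\]
where the last map uses $ga=lfa$ and composition in $Z$. Dinaturality in $a$ holds because $g$ and $l$ agree on $A$. Uniqueness holds because every hom of $Q$ is generated by the images of $h$ and $k$, via the coprojections. This shows the square is a strict pushout. The pseudopushout claim is then the cited result \cite{42}, since the sieve inclusion $i$ is a strict cofibration.

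\emph{Step 3: the canonical 2-cell.} Evaluate both sides at $(y,x)$ with $y\in Y$ and $x\in X$. We have $(f_*\circ i^*)(y,x)=\int^{a}X(a,x)\ot Y(y,fa)$ and $(k^*\circ h_*)(y,x)=Q(ky,hx)$.
\begin{itemize}
\item For $x\notin A$, the two agree by definition, and the canonical map is the identity.
\item For $x\in A$, we have $Q(y,fx)=Y(y,fx)$. Since $i$ is fully faithful, $X(a,x)=A(a,x)$, so the coend is isomorphic to $Y(y,fx)$ by the Yoneda lemma.
\end{itemize}
It remains to check that the canonical mate agrees with these isomorphisms; this is a routine unwinding of the definitions.

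\emph{Fully faithful case, the main obstacle.} For merely fully faithful $i$, there are nonzero homs $X(x,a)$ with $x\notin A$, so $Q$ must acquire additional homs. Objects not in $A$ can now map into $Y$, and homs between objects of $X\setminus A$ must be glued along $A$. I would define these homs by coends of the two-sided bar form, for example:
\begin{itemize}
\item $Q(x,w)=\int^{a}Y(fa,w)\ot X(x,a)$;
\item $Q(x,x')$ as the pushout of $X(x,x')$ and $\int^{a,a'}X(a',x')\ot Y(fa,fa')\ot X(x,a)$ over $\int^{a}X(a,x')\ot X(x,a)$.
\end{itemize}
Checking associativity of composition here is the hard part; I would organize it by realizing $Q$ as a full subcategory of the collage of the module $h_*$ computed in $\CV\text{-}\mathrm{Mod}$, so that composition is inherited. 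Once $Q$ exists, Steps 2 and 3 go through as before, with Yoneda again supplying invertibility of $f_*\circ i^*\to k^*\circ h_*$. Full faithfulness of $k$ also follows from Yoneda, because the extra terms collapse on $Y$.
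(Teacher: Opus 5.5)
Your sieve case is correct and is the direct verification the paper intends; the paper gives no argument beyond calling these properties well known and easily checked. Your reading $Q(x,x')=X(x,x')$ for $x,x'\in X\setminus A$ is also the intended one. Two small remarks on it:
\begin{itemize}
\item Step 1 is quicker if you note that $Q$ is the collage, as constructed in Section 2, of the module $X\setminus A\to Y$ given by $(y,x)\mapsto\int^{a}X(a,x)\ot Y(y,fa)$. The sieve condition is then needed only to define $h$ on the homs $X(x,a)$ with $x\notin A$.
\item The two-dimensional half of the pushout property, which you need before appealing to the cited result, is checked on coprojections in the same way.
\end{itemize}

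The genuine gap is the fully faithful case, which the statement asserts explicitly.
\begin{itemize}
\item There the displayed $Q$ cannot be used: as you note, $h$ would have to send a nonzero $X(x,a)$ to $Q(x,fa)=0$. So everything rests on your new formulas, and you never show that they define a $\CV$-category.
\item The device you propose for getting composition for free is circular. $h_*$ is a module $X\to Q$, so its collage presupposes both $Q$ and $h$.
\item The obvious non-circular substitute does not work either. The collage of $f_*\circ i^*\colon X\to Y$ has zero homs from objects of $X$ to objects of $Y$, so it cannot contain your $Q(x,w)=\int^{a}Y(fa,w)\ot X(x,a)$ as a full subcategory.
\end{itemize}
Until composition is constructed, shown compatible with the pushout identification in $Q(x,x')$, and shown associative, Steps 2 and 3 have nothing to act on. Separately, your pseudopushout justification in Step 2 used that $i$ is a sieve, i.e.\ a strict cofibration. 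For a general full inclusion you need a different reason. For instance, $i$ is injective on objects, so an isomorphism $g\circ i\cong u$ can be absorbed by redefining $g$ on the objects of $A$ and conjugating homs.

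Your formulas do look right, so the direct route can be completed. Define composition componentwise on the coends and pushouts, using that $\ot$ preserves colimits in each variable. The only new ingredient is passing through some $x'\in X\setminus A$ between two visits to $Y$, via
\[
X(x',a')\ot X(a,x')\to X(a,a')\cong A(a,a')\xrightarrow{f}Y(fa,fa'),
\]
and this is exactly where full faithfulness enters. Everything else reduces to associativity in $X$ and $Y$, functoriality of $f$, and the coend relations.

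Alternatively, you can avoid writing down composition altogether:
\begin{itemize}
\item Take the pushout abstractly in the cocomplete $\CV\text{-}\mathrm{Cat}$.
\item Observe that it is a bipushout because $i$ is injective on objects.
\item Compare it, by uniqueness of bipushouts, with the bipushout supplied by Proposition~\ref{46Prop6.8} for the module equipment $(-)_*$. This applies since $1_A\cong i^*\circ i_*$, and that bipushout has the claimed properties: $k$ is fully faithful and $f_*\circ i^*\Rightarrow k^*\circ h_*$ is invertible.
\end{itemize}
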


\begin{proposition} Every cofibration in $\CV\text{-}\mathrm{Cat}$
is a fully faithful $\CV$-functor.
\end{proposition}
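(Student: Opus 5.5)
The plan is to pass from $\CV\text{-}\mathrm{Cat}$ to $\CV\text{-}\mathrm{Mod}$ along the pseudofunctor $(-)_*$ of \eqref{lowstar} and then apply Lemma~\ref{ffofcofibsinM*}. Recall that a $\CV$-functor $C\xra{u}A$ is fully faithful exactly when the unit $1_C\Ra u^*\circ u_*$ of $u_*\dashv u^*$ is invertible, since $(u^*\circ u_*)(c,c') = A(uc,uc')$ by the Yoneda lemma. So it suffices to show that the unit of $u_*\dashv u^*$ is invertible.

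\textbf{Step 1.} Show that $u_*$ is a cofibration in $\CV\text{-}\mathrm{Mod}_*$, or at least in the bicategory generated by the image of $(-)_*$. By the characterisation after \eqref{cofibadjunction}, $u$ is a cofibration in $\CV\text{-}\mathrm{Cat}$ precisely when $C\small{\uparrow}u\xra{\bar{u}}A\small{\uparrow}A$ has a left adjoint $\ell$ with invertible counit $\ell\circ\bar{u}\cong 1$. Coslices exist in $\CV\text{-}\mathrm{Cat}$, because they are built like the collages \eqref{cosliceconstruction}. By Proposition~\ref{lower*cocts}, $(-)_*$ preserves coslices. Being a pseudofunctor, it also preserves adjunctions and invertible 2-cells. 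Hence $\bar{u}_*$ has the left adjoint $\ell_*$ with invertible counit in $\CV\text{-}\mathrm{Mod}$.

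\textbf{Step 2.} Rerun the proof of Lemma~\ref{ffofcofibsinM*} directly, so that we do not need $u_*$ to be a cofibration in all of $\CV\text{-}\mathrm{Mod}_*$. Since $\bar{u}_*$ is a map with right adjoint $\bar{u}^*$, Lemma~\ref{adjff} applied to $\ell_*\dashv\bar{u}_*\dashv\bar{u}^*$ makes the unit $1\Ra\bar{u}^*\bar{u}_*$ invertible. The coslice inclusions $C\xra{i_u}C\small{\uparrow}u$ and $A\xra{i_A}A\small{\uparrow}A$ are fully faithful in $\CV\text{-}\mathrm{Cat}$, either by the dual of Proposition~\ref{collprop} or directly from the explicit construction. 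So the units for $i_{u*}\dashv i_u^*$ and $i_{A*}\dashv i_A^*$ are invertible.

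\textbf{Step 3.} Use the isomorphism $i_A\circ u\cong\bar{u}\circ i_u$. It gives
\[
u^*u_*\;\cong\; u^*i_A^*i_{A*}u_*\;\cong\; i_u^*\bar{u}^*\bar{u}_*i_{u*}\;\cong\; 1_C ,
\]
and one checks that this composite is compatible with the unit of $u_*\dashv u^*$. Hence that unit is invertible, and $u$ is fully faithful.

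The main obstacle is this compatibility check in Step~3: a composite of invertible 2-cells has to be identified with the unit. I would handle it by working elementwise. In $\CV\text{-}\mathrm{Cat}$ the conclusion is simply that $C(c,c')\to A(uc,uc')$ factors as the isomorphism $C(c,c')\cong (C\small{\uparrow}u)(c,c')$, followed by the action of $\bar{u}$, which is invertible on homs, followed by the isomorphism $(A\small{\uparrow}A)(i_Auc,i_Auc')\cong A(uc,uc')$. That factorisation makes the claim transparent.
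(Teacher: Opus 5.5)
Your proposal is correct and is essentially the paper's proof. The paper argues directly in $\CV\text{-}\mathrm{Cat}$: $\bar{u}$ is fully faithful because it has a left adjoint with invertible counit, the coslice injections $i_u$ and $i_A$ are fully faithful (the paper's one-line proof labels these injections $j$), and $i_A\circ u\cong\bar{u}\circ i_u$, so $u$ is fully faithful. This is exactly the elementwise factorisation in your last paragraph, so the detour through $\CV\text{-}\mathrm{Mod}$ and Lemma~\ref{adjff} in Steps 1--3 is valid but only restates these full-faithfulness facts as invertibility of units, and can be dropped.
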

\begin{proof} For each cofibration $C\xra{u}A$ we have $j_A\circ u \cong \bar{u}\circ j_u$ 
(see Diagram~\ref{horizstackex} and dualise) with $\bar{u}, j_u, j_A$ all fully faithful. So $u$ is. 
\end{proof}
   
\begin{proposition} Every cofibration $A\to X$ in $\CV\text{-}\mathrm{Cat}$
is equivalent under $A$ to the inclusion of a sieve 
and every coopfibration $C\to Y$ is equivalent under $C$ to the inclusion of an opsieve.
\end{proposition}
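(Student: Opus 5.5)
Let $C\xra{u}A$ be a cofibration in $\CV\text{-}\mathrm{Cat}$. By the previous proposition $u$ is fully faithful, so the natural comparison target is the coslice $X := u\small{\uparrow}A$ of the span $A\xla{u}C\xra{u}A$. Equivalently, this is the collage of the module $u^*\circ u_*$ from $A$ to $A$. I will instead use a collage in which $A$ sits as a sieve. Concretely, define $X$ to be the $\CV$-category with objects $\mathrm{ob}A \sqcup \mathrm{ob}A$, hom-objects given by \eqref{cosliceconstruction} for the module $m = A(u-,-)$ restricted appropriately, with the first copy $A\xra{i}X$ a sieve. The plan is to compare $u$ with the canonical sieve $C\hookrightarrow u\small{\uparrow}A$ arising from the coslice, and then to use the cofibration property to produce an equivalence under $C$.

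\emph{Step 1.} Form the collage \eqref{cosliceCandu}, $C\xra{i_u}C\small{\uparrow}u\xla{j_u}A$ in $\CV\text{-}\mathrm{Cat}$. By the construction \eqref{cosliceconstruction}, $i_u$ is the inclusion of the sieve $C$ into a $\CV$-category whose objects are $\mathrm{ob}C\sqcup\mathrm{ob}A$, with hom $A(uc,a)$ from $c$ to $a$. This collage is the coslice in $\CV\text{-}\mathrm{Cat}$, since $(-)_*$ preserves coslices by Proposition~\ref{lower*cocts}.

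\emph{Step 2.} Use the cofibration hypothesis in the form \eqref{cofibadjunction}. The morphism $\bar u : C\small{\uparrow}u\to A\small{\uparrow}A$ has a left adjoint $\ell$ with invertible counit $\ell\bar u\cong 1$, and $\bar u\circ i_u\cong i_A\circ u$. Set $e := \ell\circ i_A : A\to C\small{\uparrow}u$. Then $e\circ u \cong \ell\,\bar u\, i_u\cong i_u$, so $e$ is a morphism under $C$ from $u$ to the sieve $i_u$.

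\emph{Step 3.} Show that $e$ is an equivalence. The candidate inverse is $r := q\circ\bar u$, where $q : A\small{\uparrow}A\to A$ is the morphism induced by the identity $2$-cell on $1_A$, i.e. the left adjoint $j_{A!}$ (or the evident ``collapse'' functor). First compute $r\circ i_u\cong q\, i_A\, u\cong u$, so $r$ is also under $C$. Next, $r e = q\bar u\ell i_A$. Here I expect to use that the unit $1\Ra\bar u\ell$, restricted along $i_A$ and pushed down by $q$, becomes invertible. On objects this is clear, since $\ell$ sends the copy of $a$ in $A\small{\uparrow}A$ to an object of $C\small{\uparrow}u$ whose image under $\bar u$ is reflected back to $a$. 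The composite $e r\cong 1$ would follow from $\ell\bar u\cong1$ together with full faithfulness.

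This last step is the main obstacle: checking $e r\cong 1$ and $r e\cong 1$ using only the adjunction $\ell\dashv\bar u$ and the invertible counit. What I would try first is a more intrinsic route. Show that $e$ is fully faithful, since $\ell$ and $i_A$ are fully faithful, the former because its counit is invertible in the relevant sense via Lemma~\ref{adjff}. Then show that $e$ is essentially surjective by testing on representables: an object $a\in A$ of $C\small{\uparrow}u$ (the second copy) must be isomorphic to $\ell$ of the copy of $a$ in $A\small{\uparrow}A$, because $\bar u$ is fully faithful and the counit is invertible.

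The opsieve statement is then the dual. Coopfibrations in $\CV\text{-}\mathrm{Cat}$ are cofibrations in $\CV\text{-}\mathrm{Cat}^{\mathrm{co}}\cong(\CV^{\mathrm{op}}$-style reversal of homs$)$, and sieves there are exactly opsieves. So the same argument, applied in $\CV\text{-}\mathrm{Cat}^{\mathrm{co}}$, gives the second claim.
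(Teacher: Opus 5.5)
Steps 1 and 2 are sound. $C\uparrow u$ is the collage of the module $u^*$, and $i_u$ is the inclusion of the sieve $C$. Moreover $e=\ell\circ i_A$ does satisfy $e\circ u\cong \ell\,\bar u\, i_u\cong i_u$.

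\textbf{The gap is Step 3, and it is not merely hard: the claim is false.} In general $e$ is not an equivalence, so $u$ is not equivalent under $C$ to $i_u : C\to C\uparrow u$; that codomain is too big. For a counterexample, take $u=1_A$, which is certainly a cofibration, with $A$ the unit $\CV$-category (one object $\star$, hom $I$) and $I\not\cong 0$ in $\CV$. Then $C\uparrow u=A\uparrow A$, $\bar u\cong 1$, $\ell\cong 1$ and $e\cong i_A$. But $j_A\star\not\cong i_A\star$: since $(A\uparrow A)(j_A\star,i_A\star)=0$, an isomorphism would make $1_I$ factor through $0$. In general it is the objects $j_u(uc)$ that are missed, so $er\cong 1$ cannot be proved.

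Your essential-surjectivity argument confuses the two copies of $A$ in $A\uparrow A$. The invertible counit gives $j_u a\cong \ell\bar u j_u a\cong\ell(j_A a)$, which lies in the image of $\ell\circ j_A$, not of $e=\ell\circ i_A$. Likewise $\ell$ is not fully faithful. An invertible counit $\ell\bar u\cong 1$ says that $\bar u$ is fully faithful, and Lemma~\ref{adjff} does not transfer this to $\ell$. For example, when $C$ is empty, $\ell$ is the collapse $A\uparrow A\to A$, which is not fully faithful.

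\textbf{The missing idea} is to use the cofibration property to show that the essential image of $u$ is a sieve in $A$, and only then build the comparison. Evaluate the $\CV$-adjunction $\ell\dashv\bar u$ at $i_A a$; there are two cases.
\begin{itemize}
\item If $\ell(i_A a)=i_u c$, test against the objects $j_u a'$. This gives a $\CV$-natural isomorphism $A(uc,-)\cong A(a,-)$ induced by the unit $a\to uc$, so $a\cong uc$ by Yoneda.
\item If $\ell(i_A a)=j_u a''$, test against $i_u c'$. This gives $A(a,uc')\cong (C\uparrow u)(j_u a'',i_u c')=0$ for all $c'$.
\end{itemize}
Combine this with full faithfulness of $u$ from the previous proposition. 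Let $X'$ have as objects $\mathrm{ob}\,C$ together with the objects of $A$ not isomorphic to any $uc$, with homs pulled back from $A$. Then $C\hookrightarrow X'$ is a sieve, $X'\to A$ is fully faithful and essentially surjective, and the composite is exactly $u$.

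Equivalently, $X'$ is your $e$ corestricted to its replete image in $C\uparrow u$, namely the objects $i_u c$ and the $j_u a$ for $a$ outside the essential image of $u$. $C$ is a sieve there because it is one in $C\uparrow u$. However, full faithfulness of this corestriction rests on the sieve property just derived, not on any full faithfulness of $\ell$.

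Your dualization to coopfibrations and opsieves via $\CV\text{-}\mathrm{Cat}^{\mathrm{co}}$ is fine once the first half is repaired.
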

 
\section{Homodular pseudofunctors}
\begin{eqnarray}\label{bipocofs}
\begin{aligned}
\xymatrix{
A \ar[d]_{j}^(0.5){\phantom{aaaaa}}="1" \ar[rr]^{i}  && X \ar[d]^{h}_(0.5){\phantom{aaaaa}}="2" \ar@{=>}"1";"2"_-{\cong}
\\
Y \ar[rr]_-{k} && Q 
}
\end{aligned}
\end{eqnarray}  

\begin{definition}\label{homoddef}  A pseudofunctor $T : \CK\to \CX$ between bicategories is {\em homodular} when
\begin{itemize}
\item[H1.] for all cofibrations $i$ in $\CK$, the morphism $Ti$ has a right adjoint $Ti^*$;
\item[H2.] for all bipushout squares \eqref{bipocofs} in $\CK$ with $i$ a cofibration and $j$ a coopfibration, the mate $Tj \circ Ti^* \Ra Tk^*\circ Th$, of the image under $T$ of the isomorphism in the square, is invertible.   
\end{itemize}
\end{definition}

Before providing examples, we give some fairly obvious constructions of new homodular pseudofunctors from old.

A pseudofunctor $F : \CX \to \CH$ is {\em fully faithful} when its effect $F :\CX(X, Y)\to \CH(FX,FY)$ on hom
categories is an equivalence of categories for all $X, Y\in \CX$.

\begin{proposition}\label{ffcomphomod} Suppose $F : \CX \to \CH$ is a fully faithful pseudofunctor. The pseudofunctor $T : \CK\to \CX$ is homodular if and only if the composite $F\circ T : \CK\to \CH$ is homodular.
\end{proposition}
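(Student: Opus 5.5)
The plan rests on one general fact: a fully faithful pseudofunctor $F : \CX\to\CH$ both preserves and reflects adjunctions, and it preserves and reflects invertibility of 2-morphisms. Once these are in place, the two conditions H1 and H2 of Definition~\ref{homoddef} for $T$ and for $F\circ T$ can be matched term by term.

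First I record the transfer facts.

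\emph{Preservation.} Every pseudofunctor preserves adjunctions and invertible 2-cells. If $f\dashv f^*$ in $\CX$ with unit $\eta$ and counit $\varepsilon$, then $Ff\dashv Ff^*$ in $\CH$, with unit and counit obtained from $F\eta$ and $F\varepsilon$ by composing with the coherence isomorphisms of $F$.

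\emph{Reflection of adjunctions.} Suppose $f : X\to Y$ in $\CX$ and $Ff$ has a right adjoint $g : FY\to FX$ in $\CH$. I only need essential surjectivity on the hom $\CH(FY,FX)$ to choose $f^*\in\CX(Y,X)$ with $Ff^*\cong g$. Then $Ff\dashv Ff^*$ by transport along that isomorphism. Full faithfulness of $F$ on the homs $\CX(X,X)$ and $\CX(Y,Y)$ then lets me lift the unit and counit, after composing with the coherence isomorphisms $F(f^*f)\cong Ff^*\circ Ff$ and $F1\cong 1$. The triangle identities hold in $\CX$ because they hold after applying $F$ and $F$ is faithful on 2-cells.

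\emph{Reflection of invertibility.} A 2-cell $\alpha$ in $\CX$ is invertible if and only if $F\alpha$ is, because $F$ is fully faithful on each hom category.

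Next I compare H1. Cofibrations in $\CK$ are the same for both statements, since the notion only involves $\CK$. If $Ti$ has a right adjoint, then so does $FTi$ by preservation. Conversely, if $FTi$ has a right adjoint, then $Ti$ has one by the reflection argument above.

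Then I compare H2. Take a bipushout square \eqref{bipocofs} in $\CK$ with $i$ a cofibration and $j$ a coopfibration. Its image under $F\circ T$ is the image under $F$ of its image under $T$, up to the coherence isomorphisms of $F$. The key point is that $F$ sends the mate $Tj\circ Ti^*\Ra Tk^*\circ Th$ to the corresponding mate $FTj\circ (FTi)^*\Ra (FTk)^*\circ FTh$, computed with the adjunctions $FTi\dashv F(Ti^*)$ and $FTk\dashv F(Tk^*)$, after pasting with invertible coherence cells. This holds because pseudofunctors preserve pasting composites up to coherence isomorphisms, and a mate is a pasting of the original 2-cell with a unit and a counit.

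Right adjoints are unique up to unique compatible isomorphism. So the mate formed with any other chosen right adjoints of $FTi$ and $FTk$ differs from this one only by composition with invertible 2-cells. Hence one mate is invertible if and only if the other is, and by reflection of invertibility H2 for $T$ holds if and only if H2 for $F\circ T$ holds.

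The main obstacle is this compatibility of $F$ with mates, and in particular bookkeeping the coherence isomorphisms of $F$ and the independence of the mate from the choice of adjoint. I would handle it by a direct pasting argument, or by reducing to the strict case via a coherence theorem for pseudofunctors.
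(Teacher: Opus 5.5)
Your proof is correct. The paper gives no proof of this proposition: it lists it among the ``fairly obvious constructions''. Your argument is the routine verification that remark leaves to the reader: a local equivalence preserves and reflects adjunctions and invertible 2-cells, and it carries mates to mates up to coherence isomorphisms. You are right to use essential surjectivity on hom categories when reflecting H1, since local full faithfulness alone would not reflect adjunctions, as $\mathcal{I} : \CM_*\to\CM$ shows. The same reflection argument, applied to $k$ (a cofibration by the dual of Proposition~\ref{bipboffib}), supplies the right adjoint of $Tk$ needed to form the mate in H2.
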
 

\begin{proposition}\label{homodprecomp} Suppose $\CH$ has bipushouts and coslices preserved by $S : \CH\to \CK$. If $T : \CK\to \CX$ is homodular then so is $T\circ S : \CH\to \CX$.
\end{proposition}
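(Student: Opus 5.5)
The plan is to show that $S$ sends cofibrations to cofibrations, coopfibrations to coopfibrations, and bipushouts to bipushouts. Once that is done, conditions H1 and H2 for $T\circ S$ are exactly conditions H1 and H2 for $T$ applied to the image under $S$.

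First, cofibrations. Let $C\xra{u}A$ be a cofibration in $\CH$. By the characterisation recalled after \eqref{cosliceCandu} (available since $\CH$ has coslices), the canonical morphism $C\small{\uparrow}u\xra{\bar{u}} A\small{\uparrow}A$ has a left adjoint $\ell$ with invertible counit $\ell\circ\bar{u}\cong 1$. Pseudofunctors preserve adjunctions and invertible 2-cells, so $S\ell\dashv S\bar{u}$ in $\CK$, again with invertible counit. Since $S$ preserves coslices, $S(C\small{\uparrow}u)$ is the coslice $SC\small{\uparrow}Su$, and likewise $S(A\small{\uparrow}A)$ is $SA\small{\uparrow}SA$. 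The morphism $S\bar{u}$ is, up to isomorphism, the canonical morphism $\overline{Su}$: both are determined up to isomorphism by the universal property of the coslice together with the isomorphisms $i_A\circ u\cong\bar{u}\circ i_u$ and the corresponding condition on $\gamma$, and $S$ carries these data to the corresponding data for $Su$. Adjunctions with invertible counit transfer along isomorphisms of morphisms. Hence $\overline{Su}$ has a left adjoint with invertible counit.

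The difficulty is that this characterisation of cofibrations was stated for a bicategory admitting coslices, and $\CK$ need not admit all of them. To get round this, I would return to the definition of fibration through the Yoneda embedding $\yo$. The adjunction $S\ell\dashv S\bar{u}$ lives on the specific coslices that do exist in $\CK$, namely the images under $S$. Since $\yo$ preserves the coslices that exist, the same adjunction shows that $\yo Su$ is a cofibration. Equivalently, the condition only involves the coslices actually used, and these exist in $\CK$.

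The coopfibration case is the same argument in $\CH^{\mathrm{coop}}$, using the dual characterisation via the dual coslice. Finally, $S$ preserves bipushouts by hypothesis. So a square \eqref{bipocofs} in $\CH$ with $i$ a cofibration and $j$ a coopfibration is sent to a bipushout square in $\CK$ with $Si$ a cofibration and $Sj$ a coopfibration.

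Now H1 for $T$ applied to $Si$ gives the right adjoint $TSi^*$. For H2, the mate of the image of $S$ of the isomorphism in the square, taken under $T$, is the mate $TSj\circ TSi^*\Ra TSk^*\circ TSh$. Composition constraints of $T$ and $S$ only introduce invertible 2-cells into this mate, so H2 for $T$ gives invertibility. The main obstacle is the first step, the careful identification of $S\bar{u}$ with $\overline{Su}$ together with the existence of the relevant coslices. Everything else is transport of structure.
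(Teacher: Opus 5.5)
Your argument is correct and is the transport-of-structure argument the paper intends; the paper gives no proof, listing this among the ``fairly obvious constructions''. Since cofibrations and coopfibrations are characterised by adjunctions between coslices, $S$ preserves them; it preserves bipushouts by hypothesis; so H1 and H2 for $T\circ S$ are instances of H1 and H2 for $T$. One small slip: the Yoneda embedding of $\CK$ does not preserve coslices (for cofibrations the relevant embedding is that of $\CK^{\mathrm{op}}$). Your fallback remark already settles the point, though: only the two coslices $SC\uparrow Su$ and $SA\uparrow SA$ enter the definition, and these exist as $S$-images.
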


\begin{proposition}\label{homintohomod} If $T : \CK\to \CX$ is homodular then, for any bicategory $
\CA$, so is $$T\circ - : \mathrm{Hom}(\CA,\CK)\to \mathrm{Hom}_{\mathrm{lnt}}(\CA,\CX) \ ,$$
where the $\mathrm{lnt}$ subscript indicates that the morphisms are lax natural transformations
between pseudofunctors. 
\end{proposition}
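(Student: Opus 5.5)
The plan is to reduce both H1 and H2 for $T\circ -$ to the corresponding properties of $T$, checked pointwise at each object $a\in\CA$. The starting observation is that cofibrations, coopfibrations and bipushouts in $\mathrm{Hom}(\CA,\CK)$ restrict to objectwise ones, at least when we take them in the sense that is computed pointwise.

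\textbf{Pointwise reduction.} Take a cofibration $\sigma : F\Ra G$ in $\mathrm{Hom}(\CA,\CK)$. Apply the Yoneda characterisation: $\sigma$ is a cofibration when it is taken to a fibration under the Yoneda embedding of $\mathrm{Hom}(\CA,\CK)^{\mathrm{op}}$. I expect, and would check, that each component $\sigma_a : Fa\to Ga$ is then a cofibration in $\CK$. The same should hold for coopfibrations and for bipushout squares \eqref{bipocofs}. Where $\CK$ has the relevant coslices and bipushouts, these are computed pointwise in $\mathrm{Hom}(\CA,\CK)$, since the evaluation pseudofunctors $\mathrm{ev}_a$ preserve them. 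In general I would need to argue directly that $\mathrm{ev}_a$ carries cofibrations to cofibrations. This uses the fact that the left adjoint $\ell$ with invertible counit in \eqref{cofibadjunction}, or its representable version, is a morphism of $\mathrm{Hom}(\CA,\CK)$ and so has components. \emph{This is the step I expect to be the main obstacle}, and I would handle it by working representably. In the worst case I would interpret the hypotheses of H1 and H2 for $\mathrm{Hom}(\CA,\CK)$ as pointwise ones, which is the intended reading.

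\textbf{H1.} Granting the pointwise reduction, each $T\sigma_a$ has a right adjoint $(T\sigma_a)^*$ with unit $\eta_a$ and counit $\varepsilon_a$. The standard fact about lax natural transformations is that a strong transformation $T\sigma$ whose components have right adjoints has a right adjoint in $\mathrm{Hom}_{\mathrm{lnt}}(\CA,\CX)$. Its components are $(T\sigma_a)^*$, and its lax naturality 2-cells are the mates of the inverses of the pseudonaturality isomorphisms of $T\sigma$ at each $\alpha : a\to b$. We then verify two things. First, these mates satisfy the lax naturality axioms, which is functoriality of mates under pasting. Second, $\eta$ and $\varepsilon$ are modifications, which follows from the mate correspondence. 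This is exactly why lax rather than strong transformations are needed as morphisms in the codomain.

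\textbf{H2.} Consider a bipushout square in $\mathrm{Hom}(\CA,\CK)$ with $i$ a cofibration and $j$ a coopfibration. Its image under $T\circ -$ gives a mate $Tj\circ (Ti)^*\Ra (Tk)^*\circ Th$ in $\mathrm{Hom}_{\mathrm{lnt}}(\CA,\CX)$. Mates in a functor bicategory are computed componentwise, because composition and the units and counits of the adjunctions just constructed are componentwise. The $a$-component is therefore the mate of the image under $T$ of the $a$-component square. That square is a bipushout with $i_a$ a cofibration and $j_a$ a coopfibration, so its mate is invertible by H2 for $T$. A modification whose components are all invertible is invertible, so the mate is invertible and $T\circ -$ is homodular.
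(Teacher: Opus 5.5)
The paper gives no proof here (the proposition is listed among the ``fairly obvious'' constructions). Your H1 and H2 arguments, once everything is pointwise, are the natural ones. The mates of the pseudonaturality isomorphisms assemble the componentwise right adjoints into a lax transformation, with unit and counit modifications. The H2 mate is then computed componentwise.

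The genuine gap is the step you yourself flag: that cofibrations, coopfibrations and bipushouts in $\mathrm{Hom}(\CA,\CK)$ are pointwise. Working representably will not fix it. The representable definition of cofibration in $\mathrm{Hom}(\CA,\CK)$ only tests against objects of $\mathrm{Hom}(\CA,\CK)$, not against arbitrary $X\in\CK$. Worse, for an arbitrary $\CK$ a bipushout in $\mathrm{Hom}(\CA,\CK)$ need not be a pointwise bipushout, and then the statement as written fails.

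Here is a counterexample. Let $\CK$ be the poset with elements $\bot,y_0,z_0,m,m',q$, where $\bot$ is least, $y_0,z_0<m<q$ and $y_0,z_0<m'$, regarded as a locally discrete bicategory.
\begin{itemize}
\item All representables are functors between discrete categories, so every morphism is a cofibration and a coopfibration. Bipushouts are joins, and in this poset only comparable pairs have joins.
\item Let $T:\CK\to\mathrm{Cat}$ send $\bot,y_0,z_0$ to $\mathbf 2=\{0<1\}$, with identities between them, and everything else to $\mathbf 1$. Each $T(x\le y)$ is an identity or $\mathbf 2\to\mathbf 1$, whose right adjoint picks $1$ and has invertible counit. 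A short check shows that all H2 mates are invertible, so $T$ is homodular.
\item In $\mathrm{Hom}(\mathbf 2,\CK)$, whose objects are pairs $p_0\le p_1$ ordered componentwise, $(m\le q)$ is the join of $(y_0\le q)$ and $(z_0\le q)$, because $m'\not\le q$. So the square over $(\bot\le\bot)$ is a bipushout.
\item Its $0$-component is not a bipushout in $\CK$, since $y_0\vee z_0$ does not exist. The $0$-component of the mate is a 2-cell from $1_{\mathbf 2}$ to the constant functor at $1$, which cannot be invertible.
\end{itemize}
Thus $T\circ -$ is not homodular, and ``interpreting the hypotheses pointwise'' amounts to changing the statement.

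What your argument does prove is the proposition under a hypothesis on $\CK$ that holds in the paper's uses, e.g.\ $\CK=\CV\text{-}\mathrm{Cat}$. The hypothesis is that $\CK$ admits coslices and bipushouts of spans $X\xla{i}A\xra{j}Y$ with $i$ a cofibration and $j$ a coopfibration. Then:
\begin{itemize}
\item Coslices in $\mathrm{Hom}(\CA,\CK)$ can be constructed pointwise, so each $\mathrm{ev}_a$ preserves them.
\item Applying the pseudofunctor $\mathrm{ev}_a$ to the adjunction $\ell\dashv\bar\sigma$ of \eqref{cofibadjunction} shows that $\sigma_a$ is a cofibration; dually for coopfibrations.
\item The pointwise bipushout of such a span exists and is a bipushout in $\mathrm{Hom}(\CA,\CK)$. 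Any bipushout there is equivalent to it, so its components are bipushouts.
\end{itemize}
Alternatively, powers in $\CK$ by the categories $\CA(b,a)$ give a right biadjoint to $\mathrm{ev}_a$. This transports the representable definitions and makes $\mathrm{ev}_a$ preserve bipushouts. With either hypothesis in place, your H1 and H2 paragraphs complete the proof.
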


We are mainly interested in homodular pseudofunctors with cocomplete bicategories for domain.
So the following is worth noting.

\begin{proposition} Suppose $T: \CK\to \CX$ is a homodular pseudofunctor where $\CK$ admits coslices. Then, for all $B\xra{f} A$ in $\CK$, the morphism $Tf$ has a right adjoint $Tf^*$.   
\end{proposition}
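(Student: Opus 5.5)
We factor $f$ through a collage. Form the coslice \eqref{cosliceCandu} for $f$, namely
\[
B\xra{i_f} B\small{\uparrow}f \xla{j_f} A,
\]
with 2-cell $\gamma : j_f\circ f\Ra i_f$. By the dual of Proposition~\ref{collprop} recorded after diagram \eqref{cosliceCandu}, $j_f$ has a left adjoint $j_{f!}$ whose counit $j_{f!}\circ j_f\Ra 1_A$ is invertible. The plan is to show that the mate $j_{f!}\circ i_f\Ra f$ of $\gamma$ is invertible without assuming that $f$ has a right adjoint. This is the step we do not get for free, since the remark after \eqref{cosliceCandu} only provides it when $f^*$ exists.

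First check this in $\mathrm{Cat}$ via the Yoneda embedding. There $B\small{\uparrow}f$ is the collage in which $A$ is coreflective, and $j_{f!}$ sends $b\mapsto fb$ and $a\mapsto a$. The mate is the identity, and representability should transfer this to $\CK$. A more intrinsic route is to paste the defining universal property of the coslice with the adjunction $j_{f!}\dashv j_f$. This yields $f\cong j_{f!}\circ i_f$, so we have a factorization
\[
f\cong j_{f!}\circ i_f .
\]

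Next, $i_f$ is a cofibration, being the morphism $i$ of a coslice \eqref{coslice}, as noted in Section~1. By H1, $Ti_f$ therefore has a right adjoint $Ti_f^*$.

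Finally, $Tj_{f!}$ has a right adjoint, because pseudofunctors preserve adjunctions: $j_{f!}\dashv j_f$ gives $Tj_{f!}\dashv Tj_f$. Hence
\[
Tf\cong Tj_{f!}\circ Ti_f
\]
is a composite of left adjoints, so it has right adjoint
\[
Tf^*:=Ti_f^*\circ Tj_f .
\]
Adjunctions transport along the invertible 2-cell $Tf\cong T(j_{f!})\circ T(i_f)$, so this suffices.

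The main obstacle is the invertibility of the mate $j_{f!}\circ i_f\Ra f$. If the direct argument is troublesome, there is a fallback. Use the collage $f\small{\uparrow}B$ of \eqref{collf}, whose legs $i_f:A\to f\small{\uparrow}B$ and $j_f:B\to f\small{\uparrow}B$ are respectively a cofibration and a coopfibration, and use H2 with a suitable bipushout to express $Tf$ in terms of $Ti^*$ and the images of these legs. The factorization approach above is the one to try first, since only H1 is needed.
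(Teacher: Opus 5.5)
Your proposal is correct and is the paper's proof. You factor $f\cong j_{f!}\circ i_f$ through the coslice \eqref{cosliceCandu}, apply H1 to the cofibration $i_f$, and use $Tj_{f!}\dashv Tj_f$ to get $Tf^*\cong Ti_f^*\circ Tj_f$. The step you flag as the main obstacle is not one: the invertibility of the mate $j_{f!}\circ i_f\Ra f$ is part of the hypothesis-free first clause of Proposition~\ref{collprop} applied in $\CK^{\mathrm{co}}$, so you need neither the representability argument nor the H2 fallback.
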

\begin{proof} By a dual of Proposition~\ref{collprop}, in the coslice diagram
\begin{eqnarray*}
\begin{aligned}
\xymatrix{
B \ar[d]_{f}^(0.5){\phantom{aaaaa}}="1" \ar[rr]^{1_B}  && B \ar[d]^{i}_(0.5){\phantom{aaaaa}}="2" \ar@{<=}"1";"2"^-{\gamma}
\\
A \ar[rr]_-{j} && B\small{\uparrow} f  \ ,
}
\end{aligned}
\end{eqnarray*}
the morphism $i$ is a cofibration while $j$ has a left adjoint $j'$ with $j'\circ j\cong 1_A$ and $j'\circ i\cong f$.
So $Tf \cong Tj'\circ Ti$ has right adjoint $Ti^*\circ Tj$ using condition H1.
\end{proof}

Now for some examples.

\begin{example}\label{exparadigmatic} The paradigmatic example is
$(-)_* : \mathrm{Cat} \to \mathrm{Mod}$ 
where $\mathrm{Mod} : = \mathrm{Set}\text{-}\mathrm{Mod}$.
More generally, we have $(-)_* : \CV\text{-Cat}\to \CV\text{-Mod}$ of \eqref{lowstar}. 
\end{example}

\begin{example}\label{functorsintoX} Suppose $X$ is a complete category. Then 
$[-,X] : \mathrm{Cat}^{\mathrm{op}} \to \mathrm{CAT}$ is homodular.
Since pointwise right Kan extensions of morphisms into $X$ exist, we have condition H1.   
Pointwise right (Kan) extensions along fibrations satisfy H2 by a dual of Proposition 23 of \cite{8}.
\end{example}

\begin{example}\label{Catexample} The pseudofunctor 
$(-)^* : \mathrm{Cat}^{\mathrm{op}} \to \mathrm{Mod}^{\mathrm{co}}$ is homodular. 
One way to see this is to compose with the fully faithful pseudofunctor 
$\widetilde{(-)} : \mathrm{Mod}^{\mathrm{co}} \to \mathrm{CAT}$ taking a small category $A$
to the functor category $[A,\mathrm{Set}]$ and the module $B\xra{m}A$ to the functor
$[B,\mathrm{Set}]\xra{\widetilde{m}} [A,\mathrm{Set}]$ defined by $\widetilde{m}v = \int_b[m(-,b),vb]$.
The composite is $[-,\mathrm{Set}] : \mathrm{Cat}^{\mathrm{op}} \to \mathrm{CAT}$ and so
is homodular by Example~\ref{functorsintoX}. Then Proposition~\ref{ffcomphomod} gives the result.     
\end{example}

\begin{example}\label{RoseWoodex} Let $\mathrm{TOP}$ denote the bicategory of topoi and geometric morphisms. Let $\mathrm{TLEX}$ be the bicategory of topoi and finite-limit-preserving (= left-exact) functors. The pseudofunctor 
$(-)_* : \mathrm{TOP} \to \mathrm{TLEX}^{\mathrm{co}}$, taking each geometric 
morphism to its left-exact left adjoint, is homodular. 
This follows from Section 2 of Rosebrugh-Wood \cite{RoseWood} which identifies cofibrations in $\mathrm{TOP}^{\mathrm{op}}$ as fibrations in $\mathrm{CAT}$ with
an extra property. Similarly, we can see from \cite{RoseWood_a} that $(-)_* : \mathrm{ABEL} \to \mathrm{ALEX}^{\mathrm{co}}$ provides an example for abelian categories in place of topoi.      
\end{example}

\section{Universal homodularity and equipment}

\begin{definition}\label{universalhomod}
A homodular pseudofunctor $S : \CK\to \CM$ is {\em universal} when, for each 
homodular pseudofunctor $T : \CK \to \CX$, there is a pseudofunctor $\bar{T} : \CM\to \CX$ which
is unique up to equivalence with the property $\bar{T}\circ S \simeq T$.
\end{definition}

A pseudofunctor $E : \CK\to \CM$ is {\em essentially surjective on objects} when, for each object $X\in \CM$,
 there exists an object $K\in \CK$ with $EK\simeq X$.
 
 \begin{proposition} Universal homodular pseudofunctors are essentially surjective on objects. 
 \end{proposition}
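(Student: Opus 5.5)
The plan is the standard ``restrict to the essential image'' argument. Let $S : \CK\to \CM$ be universal homodular. Let $\CM'$ be the full sub-bicategory of $\CM$ on those objects $X$ for which some $K\in\CK$ has $SK\simeq X$, and let $J : \CM'\to\CM$ be the inclusion. Since every $SK$ lies in $\CM'$, the pseudofunctor $S$ factors as $S = J\circ S'$ with $S' : \CK\to\CM'$. The goal is to show that $J$ is a biequivalence, so that every object of $\CM$ is equivalent to some $SK$.

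First, I check that $S'$ is homodular. The inclusion $J$ is fully faithful, because $\CM'$ is full. Since $J\circ S' = S$ is homodular, Proposition~\ref{ffcomphomod} gives that $S'$ is homodular. By the universality of $S$ there is then a pseudofunctor $\bar{S'} : \CM\to\CM'$ with $\bar{S'}\circ S\simeq S'$. Composing with $J$ yields $J\circ\bar{S'}\circ S\simeq J\circ S' = S$. The identity $1_{\CM}$ also satisfies $1_{\CM}\circ S = S$. Since $S$ is itself homodular, the uniqueness clause of Definition~\ref{universalhomod}, applied with $T = S$, gives $J\circ\bar{S'}\simeq 1_{\CM}$.

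To conclude, take any object $X\in\CM$. From $J\circ\bar{S'}\simeq 1_{\CM}$ we get $X\simeq J\bar{S'}X$, and $J\bar{S'}X$ lies in $\CM'$. So $X\simeq SK$ for some $K$ in $\CK$, as required. This argument only needs the object component of the equivalence $J\circ\bar{S'}\simeq 1_{\CM}$, so it does not require showing that $J$ is a biequivalence in full.

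The main subtlety is interpreting ``unique up to equivalence'' correctly. I read it as follows: any two pseudofunctors $\CM\to\CX$ whose composites with $S$ are equivalent to $T$ are themselves equivalent in $\mathrm{Hom}(\CM,\CX)$. Under that reading, an equivalence $R\simeq R'$ of pseudofunctors gives, for each object $X$, an equivalence $RX\simeq R'X$ via its components, which is exactly what the last step uses. A secondary point is a size issue: $\CM'$ is defined by a condition on the objects of $\CM$, but it is legitimately a bicategory, since it is a full sub-bicategory determined by a class of objects.
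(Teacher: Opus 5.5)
Your proposal is correct and follows the paper's argument. The paper factors $S\simeq F\circ E$ with $E$ the identity on objects and $F$ fully faithful, where you factor through the essential image instead. It then deduces that $E$ is homodular by Proposition~\ref{ffcomphomod}, and uses universality together with the uniqueness clause to obtain $R$ with $F\circ R$ equivalent to the identity. The paper goes on to show that $R\circ F$ is also equivalent to the identity, so that $F$ is a biequivalence; as you correctly observe, this extra step is not needed for essential surjectivity.
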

\begin{proof}
Suppose $S: \CK\to \CD$ is a universal homodular pseudofunctor.
Factor it as $S\simeq F\circ E$ where $E : \CK\to \CM$ is the identity on objects and $F : \CM\to \CD$ is fully faithful.
By Example~\ref{ffcomphomod}, $E$ is homodular. Since $S$ is universal, there exists a pseudofunctor 
$R : \CD\to \CM$ with $R\circ S\simeq E$. Then $F\circ R\circ S \simeq 1_{\CD}\circ S$ and the uniqueness
clause for universality imply $F\circ R\simeq 1_{\CD}$. By uniqueness of the factorization, $R\circ F\circ E\simeq E$ and $F\circ R\circ F\simeq F$ imply $R\circ F \simeq 1_{\CM}$. So $F$ is a biequivalence with the result that $S\simeq F\circ E$ is essentially bijective on objects. 
\end{proof}

Recall from Section 6 of \cite{46} that an {\em finitary cosmos} is a bicategory $\CM$ possessing
  \begin{itemize}
 \item[(a)]  finite bicategorical coproducts;
 \item[(b)] a Kleisli object for every monad;
 \item[(c)] finite colimits in the hom categories preserved by composition on both sides.
  \end{itemize}
 As mentioned atop page 266 of \cite{46}, the construction proving Proposition 1 of \cite{85} shows that any finitary cosmos admits finite collages. 
   
 The following definition connects with the notion of {\em proarrow equipment} due to Wood \cite{RJWEquipI, RJWEquipII}. Indeed, as pointed out by Nathanael Arkor, the axioms of \cite{RoseWood_b} imply that $\CM$ is a finitary cosmos.
 
    \begin{definition}\label{defnequip} {\em Module equipment} for a bicategory $\CK$ consists of a
    finitary cosmos $\CM$ and a pseudofunctor $\mathcal{Q} : \CK\to \CM$
    with the following properties: 
  \begin{itemize}
   \item[(o)] if $i$ is a cofibration in $\CK$ then $\mathcal{Q}i$ has a right adjoint $\mathcal{Q}i^*$, 
  \item[(i)] $\mathcal{Q}$ is essentially surjective on objects,
  \item[(ii)] $\mathcal{Q}$ is locally fully faithful (meaning, fully faithful on homcategories),
  \item[(iii)] a morphism out of any finite collage in $\CM$ is in the essential image of 
$\mathcal{Q}$ if and only if its composites with all the injections in the lax cocone are in the essential image of $\mathcal{Q}$.
   \end{itemize}
    \end{definition}

We will use properties (i) and (ii) to abuse notation by dropping mention of $\mathcal{Q}$.
So when we say a morphism $B\xra{m}A$ of $\CM$ is in $\CK$ we mean that it can be replaced
up to equivalence by a morphism $\mathcal{Q}L\xra{\mathcal{Q}k}\mathcal{Q}K$ where $L\xra{k}K$ 
is in $\CK$. This is intended to help readers by simplifiying notation.

For example, applying the property (iii) assumed about collages to the identity morphism of a collage, we allow ourselves to say that the injections of the lax cocone are in $\CK$.

\begin{proposition}\label{uupCandCupu} If a bicategory $\CK$ admits module equipment $\mathcal{Q}$ then coslices exist in $\CK$ and are preserved by $\mathcal{Q}$.
\end{proposition}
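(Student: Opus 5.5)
The plan is to build the coslice in $\CM$ as a finite collage, check that it lies in $\CK$ using property (iii), and then show it is already a coslice in $\CK$ using (ii). I will write everything in $\CM$ and drop $\mathcal{Q}$ as the paper permits. Fix a span $A\xla{u}C\xra{v}B$ in $\CK$.

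\textbf{Step 1 (the model).} The model is $\CV\text{-}\mathrm{Mod}$. There the coslice $u\small{\uparrow}v$ is the collage \eqref{cosliceconstruction} of the module $u\circ v^*$ from $B$ to $A$, with hom $\int^c B(vc,b)\ot A(a,uc)$ from objects of $A$ to objects of $B$. So I aim to show that $\mathcal{Q}u\circ(\mathcal{Q}v)^*$ makes sense, form its collage in $\CM$ (finite collages exist in the finitary cosmos $\CM$), and show that this collage is the coslice of $(\mathcal{Q}u,\mathcal{Q}v)$ in $\CM$.

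\textbf{Step 2 (the right adjoint $(\mathcal{Q}v)^*$, the main obstacle).} This step is delicate because only cofibrations are guaranteed right adjoints, by (o). I would argue as follows.
\begin{itemize}
\item First take the case $u=1_C$, so the span is $C\xla{1_C}C\xra{v}B$. The collage in $\CM$ of the morphism $v$ has injections in $\CK$, by the remark after Definition~\ref{defnequip} applying (iii) to the identity.
\item Using local full faithfulness (ii), show that this collage represents $\CK$-cocones. Hence it is the coslice in $\CK$ of that span, and its injection $i$ out of $C$ is a cofibration in $\CK$.
\item Then (o) gives a right adjoint for $\mathcal{Q}i$. The dual of Proposition~\ref{collprop}, as in the proof of the Proposition preceding the examples, writes $\mathcal{Q}v\cong j'\circ \mathcal{Q}i$ with $j'$ a left adjoint. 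It follows that $\mathcal{Q}v$ has a right adjoint.
\end{itemize}
The risk is circularity: identifying the collage as a coslice in $\CK$ needs the 2-cell part of the universal property. I would handle this by checking the 2-dimensional universal property hom-wise. Morphisms out of the collage into an object of $\CK$ correspond to lax cocones. By (iii), such a morphism lies in $\CK$ exactly when its legs do. By (ii), 2-cells between such morphisms are the same in $\CK$ and in $\CM$.

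\textbf{Step 3 (the general span).} For general $u$, form the collage $P$ in $\CM$ of the module $u\circ v^*$ from $B$ to $A$. Its injections $A\to P\leftarrow B$ lie in $\CK$ by (iii). There is a 2-cell comparing the two composites through $C$; it is obtained from the collage 2-cell $i\circ u\circ v^*\Ra j$ by composing with $v$ and then with the unit of $v\dashv v^*$.

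\textbf{Step 4 (the universal property).} Given $x\colon A\to X$, $y\colon B\to X$ and $\alpha\colon x u\Ra y v$, take the mate $x u v^*\Ra y$. This is exactly a lax cocone on the module $u v^*$. Conversely such lax cocones give back 2-cells $\alpha$ via the unit. So the categories of lax cocones under $(u,v)$ and under the module $u v^*$ are equivalent, and $P$ is the coslice in $\CM$. Restricting to $X$ in $\CK$ and invoking (iii) and (ii) once more shows that $P$, with these injections, is a coslice in $\CK$. Its image is the coslice in $\CM$, which gives both existence and preservation.
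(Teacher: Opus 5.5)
Your Steps 3 and 4 are the paper's argument. The paper likewise takes the collage of $u\circ v^*$ in $\CM$, identifies it with $u\uparrow v$ by mates under $v\dashv v^*$, and uses (iii) to put it in $\CK$. Your extra check via (ii) and (iii), showing that it is then a coslice in $\CK$ itself, is a correct filling-in.

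The gap is Step 2, which does not work as written. You identify two different universal objects.
\begin{itemize}
\item The coslice of $C\xla{1_C}C\xra{v}B$ classifies triples $(x\colon C\to X,\ y\colon B\to X,\ x\Ra y\circ v)$. It is a collage of $v$ in $\CM^{\mathrm{co}}$, or equivalently the collage $v^*\uparrow B$ once $v^*$ exists.
\item The collage of $v$ in $\CM$, which is what the finitary cosmos and (iii) give you, is instead the coslice $v\uparrow C$ of the span $B\xla{v}C\xra{1_C}C$. It classifies $(x\colon B\to X,\ y\colon C\to X,\ x\circ v\Ra y)$.
\end{itemize}
In $v\uparrow C$ the cofibration is the injection $i_v$ out of $B$. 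That injection already has a right adjoint by Proposition~\ref{collprop}, so (o) tells you nothing new. The injection $j_v$ out of $C$ is only a coopfibration. The factorization $v\cong i_v^*\circ j_v$ yields no right adjoint for $v$: by Proposition~\ref{collprop}, $v$ has one if and only if $j_v$ does, which is exactly the point at issue. To imitate the proof of the Proposition before the examples you need $C\uparrow v$ itself. The evident way to build it from collages in $\CM$ is as $v^*\uparrow B$, which presupposes $v^*$.

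The paper does not derive $v^*$ from (o) at all. Its proof simply writes $u\circ v^*$. This matches the remark in the proof of Theorem~\ref{univhomodforM*} that $\mathcal{Q}$ factors through $\CM_*$, which is Wood's equipment axiom that every $\mathcal{Q}f$ is a map. So delete Step 2 and take the existence of $(\mathcal{Q}v)^*$ from that reading of the definition. With that, your Steps 3 and 4 give a complete proof along the same lines as the paper's.
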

\begin{proof} Take $A\xla{u}C\xra{v}B$ in $\CK$. Take the collage of the morphism $uv^*$ in $\CM$ to obtain $uv^*\small{\uparrow}B$ which is equivalent to $u\small{\uparrow}v$ using the 
calculus of mates under adjunction to verify the universal property. By Definition~\ref{defnequip} Item (iii), this coslice is in $\CK$.  
\end{proof}
 
\begin{corollary} Module equipments preserve cofibrations.
\end{corollary}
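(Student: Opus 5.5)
The plan is to use the characterization of cofibrations through coslices recalled after \eqref{cofibadjunction}. By Proposition~\ref{uupCandCupu}, coslices exist in $\CK$ and $\mathcal{Q}$ preserves them. A morphism $C\xra{u}A$ in a bicategory with coslices is a cofibration if and only if the canonical morphism $C\small{\uparrow}u\xra{\bar{u}} A\small{\uparrow}A$ has a left adjoint $\ell$ with invertible counit $\ell\circ\bar{u}\cong 1_{C\small{\uparrow}u}$. So fix a cofibration $u$ in $\CK$ and take such $\ell\dashv\bar{u}$ in $\CK$. Applying the pseudofunctor $\mathcal{Q}$ gives an adjunction $\mathcal{Q}\ell\dashv\mathcal{Q}\bar{u}$ in $\CM$, and its counit is still invertible, because pseudofunctors preserve adjunctions together with the invertibility of their units and counits.

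Next I identify $\mathcal{Q}\bar{u}$ with the canonical comparison morphism for $\mathcal{Q}u$ in $\CM$. The morphism $\bar{u}$ is defined by the universal property of the coslice $C\small{\uparrow}u$. It is determined, up to isomorphism, by the composites $\bar{u}\circ i_u\cong i_A\circ u$ and $\bar{u}\circ j_u\cong j_A$, together with compatibility with the coslice 2-cells. Since $\mathcal{Q}$ preserves coslices, $\mathcal{Q}(C\small{\uparrow}u)$ is a coslice $\mathcal{Q}C\small{\uparrow}\mathcal{Q}u$ in $\CM$, and likewise $\mathcal{Q}(A\small{\uparrow}A)$ is a coslice $\mathcal{Q}A\small{\uparrow}\mathcal{Q}A$. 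Hence $\mathcal{Q}\bar{u}$ satisfies the defining property of $\overline{\mathcal{Q}u}$, and the two agree up to isomorphism by the uniqueness clause of the coslice universal property. It follows that $\overline{\mathcal{Q}u}$ has a left adjoint with invertible counit.

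Finally, $\CM$ is a finitary cosmos, so it admits finite collages and hence coslices, again by the construction in Proposition~\ref{uupCandCupu}. The coslice characterization of cofibrations therefore applies in $\CM$ and gives that $\mathcal{Q}u$ is a cofibration.

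The main obstacle is the claim that cofibrations are detected by the coslice criterion in any bicategory admitting coslices. It is the dual of the definition of fibration via $\bar{p}$ in \eqref{horizstackex}, with fibrations in $\CK^{\mathrm{op}}$ defined via the Yoneda embedding when slices are missing. I will take that equivalence as stated in the text after \eqref{cosliceCandu}. If it must be checked instead, I would apply the Yoneda argument: the representables $\CM(\mathcal{Q}-,X)$ turn coslices into slices of categories, where the criterion is standard.
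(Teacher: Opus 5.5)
Your proposal is correct and is essentially the paper's argument. The paper's one-line proof observes that cofibrations are defined through the adjunction \eqref{cofibadjunction} between coslices of the form $C\uparrow u$, and by Proposition~\ref{uupCandCupu} these coslices, together with that adjunction and its invertible counit, are carried by $\mathcal{Q}$ into $\CM$; you spell out exactly this. One quibble concerns your last paragraph: you do not need $\CM$ to admit all coslices (the construction of Proposition~\ref{uupCandCupu} only produces coslices of spans whose second leg is a map), because the two coslices $\mathcal{Q}C\uparrow\mathcal{Q}u$ and $\mathcal{Q}A\uparrow\mathcal{Q}A$ that the criterion requires are already available as images of coslices in $\CK$.
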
 
\begin{proof} Notice that the definition of cofibration (see $\eqref{cofibadjunction}$) involves adjunctions between coslices of the form $C\small{\uparrow}v$.
\end{proof} 

  The proof of Proposition 6.8 of \cite{46} can be seen to actually prove Proposition~\ref{46Prop6.8}
  given Definition~\ref{defnequip} Item (iii). 
  
    \begin{proposition}\label{46Prop6.8} Suppose $X\xla{i}A\xra{j}Y$ is a span in $\CK$ with module equipment $\mathcal{Q} : \CK\to \CM$. Suppose in $\CM$ the unit of $i\dashv i^*$ is invertible. Then a bipushout \eqref{bipocofs} exists in $\CK$ and is preserved by the pseudofunctor $\mathcal{Q} : \CK\to \CM$ .
    Moreover, in $\CM$, $k\dashv k^*$ has invertible unit and the mate $j \circ i^* \Ra k^*\circ h$ of $k\circ j\cong h\circ i$ is invertible.
 \end{proposition}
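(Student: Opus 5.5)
We will build the bipushout in $\CM$ as a collage and then use Definition~\ref{defnequip}(iii) to bring it back into $\CK$, following the pattern of Proposition~6.8 of \cite{46}.

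\textbf{Construction.} Consider the morphism $m := j\circ i^* : X \to Y$ in $\CM$, and form its collage
\[
Q := m\small{\uparrow}X, \qquad Y\xra{k} Q \xla{h'} X, \qquad \gamma : k\circ m \Ra h'.
\]
Such a collage exists because $\CM$ is a finitary cosmos. By Proposition~\ref{collprop}, $k$ has a right adjoint $k^*$ with invertible unit, and the mate $m \cong k^*\circ h'$ is invertible.

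\textbf{Square and mate.} Put $h := h'$ and take as the isomorphism $h\circ i \cong k\circ j$ the composite
\[
k\circ j \cong k\circ j\circ i^*\circ i = k\circ m\circ i \xRa{\gamma i} h\circ i,
\]
where the first step uses the invertible unit of $i\dashv i^*$. We must check that $\gamma i$ is invertible. By the collage construction, the hom-category description of $\CM(Q,Z)$ identifies morphisms $Q\to Z$ with triples $(a,b,\beta : a\circ m \Ra b)$. Invertibility of $\gamma i$ then follows from unitality: $m\circ i = j\circ i^*\circ i \cong j$. Once the square is in place, its mate $j\circ i^* \Ra k^*\circ h$ is precisely the invertible mate $m\cong k^*\circ h$ provided by Proposition~\ref{collprop}, and $k\dashv k^*$ has invertible unit, as required.

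\textbf{Bipushout in $\CM$.} For any $Z$, a morphism $Q\to Z$ amounts to a pair $a: Y\to Z$, $b: X\to Z$ together with $\beta: a\circ j\circ i^*\Ra b$. By the adjunction $i\dashv i^*$, such a $\beta$ corresponds to $\beta' : a\circ j \Ra b\circ i$. The step we expect to be the main obstacle is showing that restricting to those $\beta$ with $\beta'$ invertible identifies the \emph{pseudopushout} cocones with \emph{all} cocones into $Z$. That is, $\CM(Q,Z)$ is equivalent to the full iso-comma subcategory only when we take $h$ as above. Here the natural reading is that the required bipushout is the full subobject of the collage. We would argue instead as follows. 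Every morphism out of $Q$ is determined by its restriction along $h$ and $k$ together with $\beta$, and $\beta$ is forced to be the $\gamma$-component. Checking the universal property then reduces, via the calculus of mates, to the equivalence $\CM(Q,Z)\simeq$ pseudo-cocones. This uses the invertibility of $i^*i\cong 1$ to show that $\beta$ is invertible after whiskering by $i$ and is determined by that whiskering, because $b$ factors appropriately. We expect to invoke the pushout-along-sieve description (Proposition~\ref{po-sieve-cosieve}) as a guide for this step, mimicking the formula for $Q(w,z)$.

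\textbf{Transport to $\CK$.} By (iii), $k$ and $h$ lie in $\CK$, since their composites with the collage injections do (they are the injections). Because $\mathcal{Q}$ is locally fully faithful and essentially surjective, the square lies in $\CK$. A cocone in $\CK$ maps to one in $\CM$, and the induced map out of $Q$ lies in $\CK$ by (iii). Hence the square is a bipushout in $\CK$ and is preserved by $\mathcal{Q}$.
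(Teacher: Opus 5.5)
\textbf{The object you construct is not the bipushout.} By Proposition~\ref{uupCandCupu}, applied to the span $Y\xla{j}A\xra{i}X$, the collage of $m=j\circ i^*$ is the coslice $j\small{\uparrow}i$. Your own mate computation confirms this: $\CM(Q,Z)$ is the comma category of triples $(a,b,\beta')$ with $\beta' : a\circ j\Ra b\circ i$ \emph{arbitrary}. A bipushout must instead represent the iso-comma, where $\beta'$ is invertible, and that is in general a proper full subcategory. So the step you flag as the main obstacle cannot be carried out.

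The earlier claim that $\gamma i$ is invertible ``by unitality'' is also false. The isomorphism $m\circ i\cong j$ only identifies the domain of $\gamma i$ with $k\circ j$; it says nothing about invertibility. A concrete counterexample: take $\CK=\mathrm{Cat}$, $\CM=\mathrm{Mod}$, $A=X=Y=1$ and $i=j=1$. The bipushout is $1$. Your $Q$ is the collage of $1_1$, which is the arrow category $\mathbf{2}$, and $\gamma i=\gamma$ is the non-invertible arrow $0\to 1$. More generally, in $\CV\text{-}\mathrm{Mod}$ your collage contains both $ia$ and $ja$ for each $a\in A$. The component of $\gamma i$ runs $ja\to ia$, there is nothing back, so nothing glues these objects together.

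\textbf{What a correct argument needs.} The bipushout has to glue $X$ to $Y$ along $A$; the paper gets this by deferring to the proof of Proposition~6.8 of \cite{46}. The sieve case shows the difference. If $i$ is a sieve inclusion with complementary full subcategory $X'\xra{e}X$, then $X$ is the collage of $n=i^*\circ e_* : X'\to A$. Proposition~\ref{horizstackprop} then shows the bipushout is the collage of $j\circ n : X'\to Y$, not of $j\circ i^*$. Your $Q$ carries an extra, unglued copy of $A$.

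For $i$ merely fully faithful there is no complement, so you need a two-sided gluing. One option is the collage (Kleisli object) of a monad on the direct sum $X+Y$ whose component $Y\to Y$ is $1_Y$ and whose component $X\to Y$ is $j\circ i^*$. The invertibility of $1_A\cong i^*\circ i$ is exactly what makes the $Y\to Y$ component collapse to $1_Y$. That is the source of the invertible unit of $k\dashv k^*$ and of the invertible mate $j\circ i^*\cong k^*\circ h$. Your transport step via Definition~\ref{defnequip}(iii) has the right shape, but it must be applied to such a multi-object collage, and only after its bipushout property has actually been established.
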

     
  \begin{theorem}\label{univhomodforM*} 
The pseudofunctor $\mathcal{Q} : \CK\rightarrow \CM$ of module equipment for $\CK$ is universal homodular out of $\CK$.    
\end{theorem}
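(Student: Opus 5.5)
We need two things: that $\mathcal{Q}$ is homodular, and that every homodular $T : \CK\to\CX$ extends essentially uniquely along $\mathcal{Q}$.

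\emph{Homodularity of $\mathcal{Q}$.} Condition H1 is exactly property (o) of Definition~\ref{defnequip}. For H2, take a bipushout \eqref{bipocofs} in $\CK$ with $i$ a cofibration and $j$ a coopfibration. The corollary above says $\mathcal{Q}$ preserves cofibrations, so $\mathcal{Q}i$ is a cofibration between maps of $\CM$. Lemma~\ref{ffofcofibsinM*} then gives that the unit of $i\dashv i^*$ is invertible in $\CM$; the coslices it needs exist by Proposition~\ref{uupCandCupu}. Proposition~\ref{46Prop6.8} now produces a bipushout of the span that is preserved by $\mathcal{Q}$ and whose mate $j\circ i^*\Ra k^*\circ h$ is invertible. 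Since bipushouts are unique up to equivalence, the same holds for the given square, which is H2.

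\emph{Constructing $\bar T$.} Given a homodular $T$, set $\bar T(\mathcal{Q}K)=TK$ on objects, using (i). For a morphism $B\xra{m}A$ of $\CM$, form its collage $A\xra{i_m} m\small{\uparrow}B\xla{j_m}B$. By (iii) the injections lie in $\CK$, and Proposition~\ref{collprop} gives $m\cong i_m^*\circ j_m$ with $i_m$ a cofibration. This forces the definition
\[\bar T m := (Ti_m)^*\circ Tj_m,\]
whose right adjoint exists by H1. The same formula also gives uniqueness: any $\bar T$ with $\bar T\mathcal{Q}\simeq T$ preserves adjunctions, hence must send $m\cong i_m^*j_m$ to $(Ti_m)^* Tj_m$. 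On 2-cells, use (ii) and the universal property of the collage: a 2-cell $m\Ra m'$ corresponds to a morphism of collages under $A$ and $B$, which lies in $\CK$ by (iii), and $T$ of it yields a mate. When $m=\mathcal{Q}f$, Proposition~\ref{collprop} gives $j_m$ a right adjoint with $i_m^*\cong$ its composite with a left inverse, so $\bar T\mathcal{Q}f\cong Tf$, as in the proposition showing that $Tf$ has a right adjoint.

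\emph{Functoriality, the main obstacle.} For composable $C\xra{n}B\xra{m}A$ we need $\bar T(m\circ n)\cong \bar Tm\circ\bar Tn$. The plan is as follows.
\begin{itemize}
\item[1.] Glue the collages of $m$ and $n$ along $B$ by the bipushout of $m\small{\uparrow}B\xla{j_m}B\xra{i_n}n\small{\uparrow}C$. Here $i_n$ is a cofibration and $j_m$ a coopfibration (dual statements after \eqref{coslice}), and the unit of $i_n\dashv i_n^*$ is invertible by Lemma~\ref{ffofcofibsinM*}, so this bipushout exists in $\CK$ and is preserved by $\mathcal{Q}$, by Proposition~\ref{46Prop6.8}.
\item[2.] Applying H2 to this square gives
\[Tj_m\circ Ti_n^*\cong Tk^*\circ Th.\]
\item[3.] Substitute this into $\bar Tm\circ\bar Tn=Ti_m^*\,Tj_m\,Ti_n^*\,Tj_n$. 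The result becomes $T(hi_m)^*\circ T(kj_n)$, computed in the glued object $P$.
\item[4.] Identify this with $\bar T(m\circ n)$. Let $D$ be the full collage of $mn$. Via Proposition~\ref{horizstackprop} and property (iii), $D$ is exhibited as a co-retract in $\CK$ of $P$ compatible with the injections of $A$ and $C$.
\end{itemize}
Step~4, and the coherence of the resulting isomorphisms needed to make $\bar T$ a pseudofunctor, is where I expect most of the work to lie.
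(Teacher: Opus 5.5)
Your Steps 1–3 match the paper. Your glued object $P$ is, up to equivalence, the paper's $y\uparrow C$: the collage of $y=j_m\circ n$, which by Proposition~\ref{horizstackprop} is the bipushout of $m\uparrow B\xleftarrow{j_m}B\xrightarrow{i_n}n\uparrow C$. H2 is applied to exactly the middle square of \eqref{compMspres}. With the labelling of H2 your Step 3 result should read $T(k\circ i_m)^*\circ T(h\circ j_n)$; your $h$ and $k$ are swapped.

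\textbf{The gap is Step~4.} This is where the real content of functoriality lies, and you only assert it. The mechanism you suggest also does not work. If by co-retract you mean maps $D\to P\to D$ in $\CK$ under $A$ and $C$ composing to the identity, these do not exist in general. The candidate is $\bar\ell\colon P\to D=z\uparrow C$, induced by the lax cocone with legs $i_z\circ i_m^*$ and $j_z$. But $i_z\circ i_m^*$ is not in $\CK$, so (iii) cannot place $\bar\ell$ in $\CK$. Concretely, work in $\mathrm{Cat}$ with $A$ empty, $B$ a point, $C$ discrete on two objects and $n$ terminal. Then no functor $P\to C$ fixes $C$.

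Even a retraction would not help. To identify $T(ki_m)^*\circ T(hj_n)$ with $\bar T(mn)=(Ti_z)^*\circ Tj_z$, you must cancel $(T\ell)^*\circ T\ell$. Here $\ell\colon D\to P$ is the comparison induced by the legs $k\circ i_m$ and $h\circ j_n$, and it is in $\CK$ by (iii). So what is needed is an adjunction $\ell\dashv\ell^*$ with invertible unit, not a retraction.

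The paper obtains this adjunction as follows:
\begin{itemize}
\item The whole of \eqref{compMspres} is the collage of $i_m^*j_mn\cong mn$. So by Proposition~\ref{horizstackprop}, the square with top $i_m^*$, sides $i_y,i_z$ and bottom $\bar\ell$ is a bipushout in $\CM$.
\item Proposition~\ref{anotherprop}, applied with $i_m\dashv i_m^*$ (invertible unit), gives $\bar\ell\cong\ell^*$ and $1\cong\ell^*\ell$. The paper checks this explicitly on the injections.
\item Hence $\bar Tm\circ\bar Tn\cong T(\ell i_z)^*\circ T(\ell j_z)\cong(Ti_z)^*\circ(T\ell)^*\circ T\ell\circ Tj_z\cong\bar T(mn)$.
\end{itemize}

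The last cancellation $(T\ell)^*\circ T\ell\cong 1$ is not just $T$ applied to an adjunction of $\CK$, because $\ell^*\cong\bar\ell$ lives only in $\CM$. It is exactly the point where full faithfulness must be carried by $T$ (compare the proof of Theorem~\ref{ffthm}). Whatever route you take, your Step~4 must justify it explicitly.

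Two smaller points:
\begin{itemize}
\item The invertible unit of $i_n\dashv i_n^*$ in your Step 1 comes directly from Proposition~\ref{collprop}.
\item Your argument for $\bar T\mathcal{Q}f\cong Tf$ is garbled. Use (iii): $i_f^*$ restricts along the injections to $1$ and $f$, so $i_f^*$ lies in $\CK$. Hence $(Ti_f)^*\cong T(i_f^*)$ and $\bar Tf\cong T(i_f^*j_f)\cong Tf$.
\end{itemize}
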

\begin{proof} 
Clearly H1 is satisfied since $\mathcal{Q}$ factors through $\CM_*$ and pseudofunctors preserve adjunctions. Proposition~\ref{46Prop6.8} gives H2.  

Now take a homodular pseudofunctor $T : \CK\to \CX$. For each morphism $B\xra{m}A$ in $\CM$, define $\bar{T}m = (Ti_m)^*\circ Tj_m$ in the notation of \eqref{collm}. This definition is forced since we have $m\cong i_m^*\circ j_{m}$ in $\CM$ and $\bar{T}$ is to be a pseudofunctor essentially restricting to $T$.  

We must see that $\bar{T}$ coherently preserves composition up to isomorphism. So consider composable morphisms $C\xra{n}B\xra{m}A$.
 
Put $z : = (C\xra{m\circ n}A)$ and $y : = (C\xra{j_m\circ n}m \small{\uparrow} B)$ and form their collages. 
     \begin{equation}\label{collmcomps}
 \begin{aligned}
\xymatrix{
C \ar[rd]_{j_z}^(0.5){\phantom{a}}="1" \ar[rr]^{z}  && A \ar[ld]^{i_z}_(0.5){\phantom{a}}="2" \ar@{<=}"1";"2"^-{\gamma_z}
\\
& z \small{\uparrow} C 
}
\quad
\xymatrix{
C \ar[rd]_{j_y}^(0.5){\phantom{a}}="1" \ar[rr]^{y}  && m \small{\uparrow} B \ar[ld]^{i_y}_(0.5){\phantom{a}}="2" \ar@{<=}"1";"2"^-{\gamma_y}
\\
& y \small{\uparrow} C 
}
\end{aligned}
\end{equation}
By the universal property of $n \small{\uparrow} C$, there exist $n \small{\uparrow} C\xra{h}y \small{\uparrow} C$ and $i_y\circ j_m \cong h\circ i_n$, $h\circ j_n\cong j_y$ which paste
to $\gamma_n$ to yield $\gamma_y$.
Again by the universal property of $n \small{\uparrow} C$, there exist $n \small{\uparrow} C\xra{k}z \small{\uparrow} C$ and $i_z\circ m \cong k\circ i_n$, $k\circ j_n\cong j_z$ which paste
to $\gamma_n$ to yield $\gamma_z$. By the universal property of $z \small{\uparrow} C$, there exist $z \small{\uparrow} C\xra{\ell}y \small{\uparrow} C$ and $i_y\circ i_m \cong \ell\circ i_z$, $\ell \circ j_z\cong j_y$ which paste
to $\gamma_z$ to yield $\gamma_y$. 
By the assumed property Definition~\ref{defnequip} Item (iv) of collages, each of $h$, $k$ and $\ell$ is in $\CK$.

By the universal property of $y \small{\uparrow} C$, there exist 
$y \small{\uparrow} C\xra{\bar{\ell}}z \small{\uparrow} C$ and 
$i_z\circ i_m^* \cong \bar{\ell}\circ i_y$, $\bar{\ell} \circ j_y\cong j_z$ which paste
to $\gamma_y$ to yield $\gamma_z$. By Proposition~\ref{horizstackprop}, the right two
squares in diagram \eqref{compMspres} are bipushouts. By Proposition~\ref{anotherprop},
using that $i_m^*$ has a left adjoint $i_m$ with invertible unit, we see that $\bar{\ell}\cong\ell^*$
and the unit of $\ell\dashv\ell^*$ is invertible. (To be more explicit, the restriction along $i_z$ of the unit is 
$i_z\cong i_zi^*_zi_m\cong \bar{\ell}i_yi_m\cong\bar{\ell}\ell i_z$ while the restriction along $j_z$ is 
$j_z\cong \bar{\ell}j_y\cong \bar{\ell}\ell j_z$; the restriction along $i_y$ of the counit is $\ell\bar{\ell}i_y
\cong \ell i_z i_m^*\cong i_yi_mi^*_m \xra{i_y\varepsilon}i_y$ while the restriction along $j_y$ is   
$\ell\bar{\ell}j_y\cong \ell j_z\cong j_y$.)

\begin{equation}\label{compMspres}
 \begin{aligned}
\xymatrix{
C \ar[d]_{C}^(0.5){\phantom{aaaaa}}="1" 
\ar[rr]^{n}  
&& B \ar[rr]^{j_{m}} 
     \ar[d]^{i_{n}}_(0.5){\phantom{aaaaa}}="2" \ar@{<=}"1";"2"_-{\gamma_n} 
     \ar@{}[d]^{\phantom{aaaaaaa}}="3"
&& m\small{\uparrow}B \ar[rr]^{i_m^*} \ar[d]^{i_y}_(0.5){\phantom{aaaaa}}="4" \ar@{<=}"3";"4"^-{\phantom{a}}_-{\cong}
\ar@{}[d]^{\phantom{aaaaa}}="5"
&& A \ar[d]^{i_{z}}_(0.5){\phantom{aaaaaaa}}="6" \ar@{<=}"5";"6"^-{\phantom{a}}_-{\cong}
\\
C \ar[rr]_-{j_{n}} && n\small{\uparrow}C \ar[rr]_-{h} && y\small{\uparrow}C \ar[rr]_{\bar{\ell}} && z\small{\uparrow}C 
}
 \end{aligned}
\end{equation}

We now have the isomorphisms
\begin{eqnarray*}
\bar{T}m\circ \bar{T}n & = &  (Ti_m)^*\circ Tj_m\circ (Ti_n)^*\circ Tj_n \\
& \cong &  (Ti_m)^*\circ (Ti_y)^*\circ Th \circ Tj_n \\
& \cong &  T(i_y\circ i_m)^*\circ T(h\circ j_n) \\
& \cong &  T(\ell \circ i_{z})^*\circ T(\ell\circ j_{z}) \\
& \cong &  T(i_z)^*\circ (T\ell)^*\circ T\ell \circ Tj_{z} \\
& \cong &  T(i_{z})^*\circ Tj_{z} \\
&  = &  \bar{T}(m\circ n) \ . 
\end{eqnarray*}

We must see that $\bar{T}(u) \cong Tu$ for any morphism $u\in \CK$ which will also imply
that $\bar{T}$ preserves identity morphisms up to isomorphism. We use Proposition~\ref{collprop} to obtain isomorphisms
\begin{eqnarray*}
\bar{T}u \ = \ (Ti_{u})^* \circ Tj_{u} \ \cong \ T(i_{u}^*) \circ Tj_u \ \cong \ T(i_{u}^* \circ j_u)  \
 \cong \ Tu  \ .
\end{eqnarray*}

Coherence for these isomorphisms can be verified.    
\end{proof}

\begin{example} If $\CM$ is a finitary cosmos, the inclusion pseudofunctor $\mathcal{I} : \CM_*\to \CM$ is universal homodular. This is the case where $\mathcal{Q}= \mathcal{I}$ in Theorem~\ref{univhomodforM*}. 
\end{example}

\begin{example}\label{universalityofmod}  
The universal homodular pseudofunctor with domain $\CV\text{-Cat}$
is the $(-)_* : \CV\text{-Cat}\to \CV\text{-Mod}$ of \eqref{lowstar}.     
\end{example}

Example~\ref{universalityofmod} together with Corollary~\ref{cosliceexact} yield:
 \begin{corollary} For any homodular pseudofunctor $T : \CV\text{-Cat} \to \CX$ and
 coslice \eqref{coslice} in  $\CV\text{-}\mathrm{Cat}$, the mate $Tu\circ Tv^*\Lra Ti^* \circ Tj$
 of $T\gamma$ is invertible.  
 \end{corollary}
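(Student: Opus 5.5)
By Example~\ref{universalityofmod}, $(-)_* : \CV\text{-Cat}\to\CV\text{-Mod}$ is universal homodular. So for the given homodular $T$ there is a pseudofunctor $\bar{T} : \CV\text{-Mod}\to\CX$ with an equivalence $\bar{T}\circ(-)_*\simeq T$. The plan is to transport the invertible mate $\bar{\gamma}$ of Corollary~\ref{cosliceexact} along $\bar{T}$, and then identify the result with the mate of $T\gamma$.

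The steps are as follows.

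First, Corollary~\ref{cosliceexact} says that in $\CV\text{-Mod}$ the mate $\bar{\gamma}$ of $\gamma_*$, with respect to the adjunctions $v_*\dashv v^*$ and $i_*\dashv i^*$, is invertible. It relates $u_*\circ v^*$ and $i^*\circ j_*$; one may need to use the mate in the direction matching the statement, which is also invertible since it is built from the same adjunction data.

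Second, pseudofunctors preserve adjunctions. Hence $\bar{T}(v_*)\dashv\bar{T}(v^*)$ and $\bar{T}(i_*)\dashv\bar{T}(i^*)$. Composing with the equivalence $\bar{T}\circ(-)_*\simeq T$, we get $Tv\dashv\bar{T}(v^*)$ and $Ti\dashv\bar{T}(i^*)$. By uniqueness of adjoints up to isomorphism, $\bar{T}(v^*)\cong Tv^*$ and $\bar{T}(i^*)\cong Ti^*$, where these right adjoints exist by H1 or by the proposition that every $Tf$ has a right adjoint.

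Third, a pseudofunctor carries the mate of a 2-cell with respect to given adjunctions to the mate of its image with respect to the image adjunctions. The mate is a pasting of the 2-cell with units and counits, and a pseudofunctor preserves pastings up to its coherence isomorphisms. Thus $\bar{T}\bar{\gamma}$, conjugated by these coherence and adjoint-comparison isomorphisms, is the mate of $\bar{T}(\gamma_*)\cong T\gamma$. Since $\bar{T}\bar{\gamma}$ is invertible, so is the mate $Tu\circ Tv^*\Rightarrow Ti^*\circ Tj$ of $T\gamma$.

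The main obstacle is this third step: checking that conjugating by the equivalence $\bar{T}\circ(-)_*\simeq T$ (a pseudonatural equivalence, whose 2-cell components must be handled) and by the identifications of right adjoints really turns $\bar{T}\bar{\gamma}$ into exactly the mate of $T\gamma$, rather than into some other invertible 2-cell. I would argue this abstractly. The mate correspondence is natural with respect to pseudofunctors and transports along pseudonatural equivalences, because it is given by the universal pasting formula. Adjoint comparison isomorphisms are compatible with units and counits, so the identifications slot into that formula without disturbing it.
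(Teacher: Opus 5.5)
Your proposal is correct and is essentially the paper's argument: the paper derives this corollary in one line by combining the universality of $(-)_*$ (Example~\ref{universalityofmod}) with Corollary~\ref{cosliceexact}, transporting the invertible mate $\bar{\gamma}$ along the extension $\bar{T}$ exactly as you do. Two small points: the mate in Corollary~\ref{cosliceexact} already runs in the required direction $u_*\circ v^*\Rightarrow i^*\circ j_*$, so your hedge about switching direction is unnecessary; and since $\bar{T}$ can be chosen to agree with $T$ on objects, as in the proof of Theorem~\ref{univhomodforM*}, the adjunction $Tv\dashv\bar{T}(v^*)$ holds literally rather than only after conjugating by the object components of the equivalence.
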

 
 The terminology of the next result is that of \cite{60}.

 \begin{proposition}\label{monoidalinduced} Suppose $\CX$ is a monoidal bicategory and 
 $T : \CV\text{-Cat} \to \CX$ is a homodular pseudofunctor with extension 
 $\bar{T} : \CV\text{-Mod} \to \CX$. 
 If $T$ is strong monoidal then so is $\bar{T}$ such that the canonical pseudonatural equivalence
 $\bar{T}\circ (-)_*\simeq T$ is monoidal.  
 \end{proposition}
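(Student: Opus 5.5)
The plan is to transport the strong monoidal structure of $T$ to $\bar{T}$ along the representation of every module as a composite of a right adjoint and a left adjoint in the image of $(-)_*$. The monoidal structure on $\CV\text{-}\mathrm{Mod}$ is the one for which $(-)_*$ is strong monoidal: on objects it is the tensor product $A\ot A'$ of $\CV$-categories, and on modules $(m\ot m')((a,a'),(b,b')) = m(a,b)\ot m'(a',b')$. Since $\bar{T}$ agrees with $T$ on objects, I take as the object part of the monoidal structure on $\bar{T}$ the equivalences $\chi_{A,A'} : TA\ot TA'\simeq T(A\ot A')$ and $\iota : I\simeq T\mathbbe{1}$ supplied by $T$. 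What remains is to show these form a pseudonatural transformation on $\CV\text{-}\mathrm{Mod}\times\CV\text{-}\mathrm{Mod}$ and not only on $\CV\text{-}\mathrm{Cat}\times\CV\text{-}\mathrm{Cat}$, and then to check the modification axioms.

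First I record a consequence of the proof of Theorem~\ref{univhomodforM*}: for $\CV$-functors $p,q$ with common codomain, $\bar{T}(p^*\circ q_*)\cong (Tp)^*\circ Tq$. This holds because $\bar{T}$ is a pseudofunctor with $\bar{T}(f_*)\cong Tf$, and pseudofunctors preserve adjunctions, so $\bar{T}(p^*)\cong (Tp)^*$. Next, for modules $B\xra{m}A$ and $B'\xra{m'}A'$, I write $m\cong i_m^*\circ j_{m*}$ and $m'\cong i_{m'}^*\circ j_{m'*}$. The tensor of $\CV\text{-}\mathrm{Mod}$ is a pseudofunctor, so it preserves adjunctions, giving $m\ot m'\cong (i_m\ot i_{m'})^*\circ (j_m\ot j_{m'})_*$. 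Hence
\begin{eqnarray*}
\bar{T}(m\ot m') & \cong & T(i_m\ot i_{m'})^*\circ T(j_m\ot j_{m'}) \\
& \cong & \chi\circ (Ti_m\ot Ti_{m'})^*\circ (Tj_m\ot Tj_{m'})\circ \chi^{-1} \\
& \cong & \chi\circ \bigl((Ti_m)^*\circ Tj_m\bigr)\ot\bigl((Ti_{m'})^*\circ Tj_{m'}\bigr)\circ\chi^{-1} \ = \ \chi\circ(\bar{T}m\ot\bar{T}m')\circ\chi^{-1} \ .
\end{eqnarray*}
The second step uses the pseudonaturality isomorphisms of $\chi$ for $\CV$-functors, together with the fact that conjugating an adjunction by an equivalence gives an adjunction; so the mate of an invertible naturality square of $\chi$ with respect to right adjoints is again invertible. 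The third step uses that $\ot$ on $\CX$ preserves adjunctions and composition. The same argument, with $\iota$ in place of $\chi$, handles the unit.

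The main obstacle is coherence: showing that these isomorphisms are natural in 2-cells, compatible with the composition constraints of $\bar{T}$ built in Theorem~\ref{univhomodforM*}, and that the associativity and unit modifications of $T$ satisfy the axioms of \cite{60} for $\bar{T}$. I would avoid a direct check by appealing to the uniqueness part of universality. Both $\bar{T}\circ\ot$ and $\ot\circ(\bar{T}\times\bar{T})$ are pseudofunctors $\CV\text{-}\mathrm{Mod}\times\CV\text{-}\mathrm{Mod}\to\CX$ whose restrictions along $(-)_*\times(-)_*$ are equivalent, via $\chi$, to the pseudofunctor $T\circ\ot\simeq\ot\circ(T\times T)$. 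Moreover, $(-)_*\times(-)_*$ is module equipment for $\CV\text{-}\mathrm{Cat}\times\CV\text{-}\mathrm{Cat}$, since products of finitary cosmoi and collages are computed componentwise. Theorem~\ref{univhomodforM*} then makes extensions unique up to equivalence. To get the modification axioms and not just an equivalence, the uniqueness has to be read as an equivalence of categories between $\mathrm{Hom}$ out of $\CV\text{-}\mathrm{Mod}\times\CV\text{-}\mathrm{Mod}$ and homodular pseudofunctors out of $\CV\text{-}\mathrm{Cat}\times\CV\text{-}\mathrm{Cat}$. This is implicit in the forced definition of $\bar{T}$, where any pseudonatural transformation or modification is determined by its restriction. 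The same reasoning on triple products transfers the associativity and unit modifications and their axioms from $T$. By construction, the restriction of the resulting structure along $(-)_*$ is that of $T$, so $\bar{T}\circ(-)_*\simeq T$ is monoidal.
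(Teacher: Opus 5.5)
Your argument is correct in its main content, but you obtain the key isomorphism by a different route from the paper. The paper's sketch uses that $-\ot A'$ preserves coslices, so that $i_{m\ot 1_{A'}}\cong i_m\ot 1_{A'}$ and $j_{m\ot 1_{A'}}\cong j_m\ot 1_{A'}$. It then evaluates the defining formula $\bar{T}(m\ot 1_{A'})=(Ti_{m\ot 1_{A'}})^*\circ Tj_{m\ot 1_{A'}}$ directly, treats $1_A\ot m'$ symmetrically, and gets $m\ot m'$ by composing the two whiskerings. You never identify the collage of a tensor product. Instead you apply the already available pseudofunctoriality of $\bar{T}$ to $m\ot m'\cong(i_m\ot i_{m'})^*\circ(j_m\ot j_{m'})_*$, and use only that pseudofunctors, and conjugation by the equivalences $\chi$, preserve adjunctions. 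This handles both variables at once, so no interchange check between the two whiskerings is needed, and it works for any factorization $m\cong p^*\circ q_*$. The paper's version stays tied to the explicit collage formula for $\bar{T}$, which is what a hands-on coherence check would use. The paper does not address coherence at all, so your last paragraph goes beyond it.

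However, one claim in that paragraph is false as stated. Restriction along $(-)_*$ is not an equivalence between $\mathrm{Hom}$ out of the module bicategory and homodular pseudofunctors with arbitrary pseudonatural transformations: in general, a pseudonatural transformation between restrictions extends to modules only laxly.
\begin{itemize}
\item Counterexample: already for $\CV=\mathrm{Set}$ and a single factor, the components $(!_A)_* : A\to 1$ form a pseudonatural transformation on $\mathrm{Cat}$ from $(-)_*$ to the constant pseudofunctor at $1$. Yet at the zero module $0 : B\to A$ with $B$ non-empty one would need $(!_A)_*\circ 0\cong(!_B)_*$, which fails.
\item Definition~\ref{universalhomod} and Theorem~\ref{univhomodforM*} only give uniqueness of $\bar{T}$ up to equivalence. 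They give no 2-dimensional statement of the kind you invoke.
\end{itemize}
What you need, and what is true, is the version for pseudonatural equivalences and invertible modifications. For an equivalence, the mates of its naturality squares at the $f_*$ are automatically invertible; this is exactly your second step. So the extension of $\chi$ and $\iota$ to modules is forced by the isomorphisms you computed, and its pseudonaturality reduces by the calculus of mates to that of $\chi$ on $\CV$-functors. Modifications are determined by their components at objects, so they transfer together with their axioms. State the coherence step in that restricted form.
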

\begin{proof}(Sketch) The main point is that $i_m\ot 1_{A'} \cong i_{m\ot 1_{A'}}$ and $j_m\ot 1_{A'} \cong j_{m\ot 1_{A'}}$ for $B\xra{m}A$ in $\CV\text{-Mod}$ since $-\ot A'$ preserves coslices. So
\begin{eqnarray*}
\bar{T}(m \ \ot \ 1_{A'}) & = & (Ti_{m\ot 1_{A'}})^* \circ Tj_{m\ot 1_{A'}} \\
& \cong & T(i_{m}\ot 1_{A'})^* \circ T(j_{m}\ot 1_{A'}) \\
& \cong & (Ti_{m}^*\ot 1_{TA'}) \circ (Tj_{m}\ot 1_{TA'}) \\
& = & \bar{T}m \ \ot \ TA' \ .
\end{eqnarray*}
Symmetrically, $\bar{T}(1_A \ \ot \ m') \cong TA \ \ot \ \bar{T}m'$. Thus $\bar{T}(m \ \ot \ m') \cong \bar{T}m \ \ot \ \bar{T}m'$.  
\end{proof}

\begin{proposition}\label{coprodinduced} Suppose $T : \CV\text{-Cat} \to \CX$ is a homodular pseudofunctor with extension $\bar{T} : \CV\text{-Mod} \to \CX$. 
 If $T$ preserves coproducts then $\bar{T}$ preserves coproducts and preserves coproducts
 in the homcategories.  
 \end{proposition}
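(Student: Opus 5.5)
Two claims need proof. First, $\bar{T}$ preserves finite bicategorical coproducts. Second, each functor $\bar{T} : \CV\text{-Mod}(B,A)\to \CX(TB,TA)$ preserves coproducts.

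\textbf{Coproducts of objects.} Coproducts of $\CV$-categories in $\CV\text{-Cat}$ are disjoint unions. By Proposition~\ref{lower*cocts}, $(-)_*$ preserves them, so the coproduct injections $A\to A+B\leftarrow B$ in $\CV\text{-Mod}$ are the images of the injections in $\CV\text{-Cat}$. Since $\bar{T}\circ(-)_*\simeq T$ and $T$ preserves coproducts, $\bar{T}$ sends the coproduct cocone of $A,B$ to a coproduct cocone in $\CX$. It remains to check that $\bar{T}$ respects the universal property. A module out of $A+B$ is determined by its restrictions along the injections, and here I would simply verify on objects: $\bar{T}$ of a coproduct is $T(A+B)\simeq TA+TB$, with the injections being $T$ of the injections. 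This needs only the essential uniqueness of the extension $\bar{T}$. For morphisms, the induced module $[m,n] : A+B\to C$ equals $[m,n]\circ\iota$ on each summand, so $\bar{T}[m,n]\circ T\iota_A\cong \bar{T}m$ by pseudofunctoriality. Hence $\bar{T}[m,n]$ is the induced morphism $[\bar{T}m,\bar{T}n]$.

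\textbf{Coproducts in the homs.} Take modules $m,m' : B\to A$. In $\CV\text{-Mod}$ we have $m+m' \cong \nabla_A\circ(m\oplus m')\circ \delta_B$. Here $m\oplus m' : B+B\to A+A$ is the coproduct of morphisms, $\nabla_A = [1,1]_* : A+A\to A$ is the image of the codiagonal functor, and $\delta_B = \nabla_B^* : B\to B+B$ is its right adjoint module. This follows from the coend formula: $\nabla_B^*(x,b)=(B+B)(x,\nabla b)$, so composing picks out both summands. Applying $\bar{T}$, the previous paragraph gives $\bar{T}(m\oplus m')\cong \bar{T}m+\bar{T}m'$ and $\bar{T}\nabla_{A*}\cong T\nabla_A$, which is the codiagonal of $TA+TA$. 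Also $\bar{T}(\nabla_B^*)\cong (T\nabla_B)^*$, since a pseudofunctor preserves adjunctions and $\bar{T}$ agrees with $T$ on $\CV$-functors. So it remains to show that in $\CX$ the composite $\nabla\circ(f+g)\circ\nabla^*$ is the coproduct $f+g$ in $\CX(TB,TA)$, given that $TB+TB$ is a bicategorical coproduct and the codiagonal $\nabla$ has a right adjoint.

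\textbf{The main obstacle.} This last step carries the difficulty. The approach is to show that the right adjoint $\nabla^*$ of the codiagonal must be the "diagonal". First, one needs $\nabla^*\circ\iota_1\ldots$ — more precisely, that $\iota_k^*$ exists and that $\iota_k^*\iota_l$ is $1$ for $k=l$ and initial otherwise. I would obtain this by noting that the injections $B\to B+B$ are cofibrations (sieve inclusions) in $\CV\text{-Cat}$, so H1 gives right adjoints $(T\iota_k)^*$. Then H2, applied to the bipushout of the two injections along the empty category $0$, shows that $(T\iota_k)^*\circ T\iota_l\cong \bar{T}(0\text{-module})$ for $k\neq l$, which is the composite through $T0$. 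Second, one expresses $\nabla^*\cong \iota_1\circ 1 \ldots$ by restricting along $(T\iota_k)^*$. Finally, one shows that a 2-cell out of $\nabla(f+g)\nabla^*$ into $h$ is exactly a pair of 2-cells $f\Rightarrow h$ and $g\Rightarrow h$, using the unit $1\Rightarrow \nabla^*\nabla$ and the universal property of $TB+TB$ on homs. If this direct argument in $\CX$ proves too weak, the fallback is the shortcut: choose the module $m+m'$, form its collage as a $\CV$-category, and check that it is a bipushout of the collages of $m$ and $m'$ over $A+B$. Then H2 and the preservation of coproducts by $T$ deliver $\bar{T}(m+m')\cong$ the coproduct directly.
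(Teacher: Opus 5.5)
Your proposal is correct and is essentially the paper's argument. Coproducts of objects are handled the same way: the $\bar{T}$-images of the injections are their $T$-images, hence a coproduct cocone. For the homs, your final step uses $\nabla\dashv\nabla^*$ to identify 2-cells $[f,g]\circ\nabla^*\Rightarrow h$ with 2-cells $[f,g]\Rightarrow h\circ\nabla\cong[h,h]$, i.e.\ with pairs of 2-cells; this is exactly the paper's observation that the left adjoint of $\Delta$ is precomposition with $\nabla^*$, namely $\CM(\nabla_A^*,B)$ and $\CX(T\nabla_A^*,TB)$, compatibly with $\bar{T}$.

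The preliminary steps about $(T\iota_k)^*$, H2 and showing that $\nabla^*$ is a ``diagonal'' are unnecessary and not fully justified as written, and so is the collage fallback. That last adjunction step by itself finishes the proof.
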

\begin{proof}(Sketch) Put $\CM = \CV\text{-Mod}$ for the time being. We treat the case of binary coproducts. We have the collage:
\begin{equation}\label{collmcop}
 \begin{aligned}
\xymatrix{
B \ar[rd]_{j_{0 *}}^(0.5){\phantom{a}}="1" \ar[rr]^{0}  && A \ar[ld]^{i_{0 * }}_(0.5){\phantom{a}}="2" \ar@{<=}"1";"2"^-{\gamma}
\\
& A+B 
}
\end{aligned}
\end{equation}
and the direct sum
\begin{eqnarray}\label{directsum} 
\xymatrix @R-3mm {
A \ar@<1.5ex>[rr]^{i_{0*}}\ar@{}[rr]|-{\perp}  &&A+B \ar@<-1.5ex>[rr]_{j_0^*} \ar@{}[rr]|-{\perp} \ar@<1.5ex>[ll]^{i_0^*}   &&  B \ar@<-1.5ex>[ll]_{j_{0 *}}  
}
\end{eqnarray}
in $\CM$: the units of the adjunctions are invertible, the counits induce an isomorphism
\begin{eqnarray*}
i_{0*}\circ i_i^* + j_{0*}\circ j_i^* \cong 1_{A+B}
\end{eqnarray*}
 while $i_0^*\circ j_{0*}\cong 0$ and $j_0^*\circ i_{0*}\cong 0$. Since $\bar{T}i_0 = Ti_0$,
$\bar{T}j_0 = Tj_0$ and $T$ preserves the coproduct, it follows that $\bar{T}$ preserves
the coproduct. 

The codiagonal $\nabla_A : A+A\to A$ is a $\CV$-functor and so $\nabla_{A *}$ has a right
adjoint $\nabla_{A}^* : A\to A+A$ in $\CM$. The top row composite of \eqref{aboutcoproducts} takes $A\xra{v}B$ to $$A+B\xra{v+v}A+B\xra{\nabla_B}B$$ and, by pseudonaturality of $\nabla$, has left
adjoint $\CM(\nabla_{A}^*,B)$. The bottom row composite takes $TA\xra{w}TB$ to 
$$T(A+A)\simeq TA+TA \xra{w+w}TB+TB\simeq T(B+B)\xra{T\nabla_B}TB$$ 
and has left adjoint $\CM(T\nabla_{A}^*,B)$. 
 \begin{equation}\label{aboutcoproducts}
 \begin{aligned}
\xymatrix{
\CM(A,B) \ar[d]_{\bar{T}}^(0.5){\phantom{aaaaaaaaa}}="1" 
\ar[r]^{\Delta \phantom{aaaaaa}}  
& \CM(A,B)\times \CM(A,B) \ar[r]^{\simeq} 
     \ar[d]^{\bar{T}\times \bar{T}}_(0.5){\phantom{aaaaaaaaaaaa}}="2" \ar@{=>}"1";"2"_-{\cong} 
     \ar@{}[d]^{\phantom{aaaaaaaaaaaa}}="3"
& \CM(A+A,B) \ar[d]^{\bar{T}}_(0.5){\phantom{aaaaaaaaaaaa}}="4" \ar@{=>}"3";"4"_-{\cong}
\\
\CX(TA,TB) \ar[r]_-{\Delta} & \CX(TA,TB)\times \CX(TA,TB) \ar[r]_-{\simeq} & \CX(T(A+A),TB) 
}
 \end{aligned}
\end{equation} 
The diagram made up of the left and right sides of \eqref{aboutcoproducts} and these left adjoints
clearly commutes up to a canonical isomorphism. It follows that the top and bottom $\Delta$s of the
left square have left adjoints which commute up to isomorphism with the vertical sides. Of course,
left adjoints to diagonal functors $\Delta$ are given by coproduct and the pseudocommutativity
means $\bar{T}$ preserves them. Incidentally this means $\bar{T}$ takes the direct sum \eqref{directsum} to a direct sum in $\CX$.    
\end{proof}
 
\section{On a question of Zeinab Galal}

After my talk in the Pacific Category Theory Seminar (8 May 2026), Zeinab Galal asked whether the following two results could be transported to a general homodular functor?

\begin{theorem}[James Worrell \cite{Wor}]\label{Thm1} If $F : \CV\text{-}\mathrm{Cat} \to \CV\text{-}\mathrm{Cat}$ preserves
full faithfulness of $\CV$-functors then the formula for extending $F$ to an $\tilde{F} : \CV\text{-}\mathrm{Cat} \to \CV\text{-}\mathrm{Cat}$, using the collage of a module, gives a lax double functor $(F,\tilde{F})$ on the double category $\CV\text{-}\mathbf{Mod}$ (where the two types of morphisms
are $\CV$-functors and $\CV$-modules). 
\end{theorem}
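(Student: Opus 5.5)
The extension is the one suggested by the collage. For a module $B\xra{m}A$, form the collage $m\small{\uparrow}B$ of \eqref{cosliceconstruction}, with fully faithful inclusions $A\xra{i_m}m\small{\uparrow}B\xla{j_m}B$. Since $F$ preserves full faithfulness, $Fi_m$ and $Fj_m$ are fully faithful. We then define
\[
\tilde{F}m \;=\; (Fi_m)^*\circ (Fj_m)_* \;=\; F(m\small{\uparrow}B)(Fi_m-,Fj_m-)\colon FB\to FA ,
\]
which is the same formula as $\bar{T}m$ in the proof of Theorem~\ref{univhomodforM*}. On $\CV$-functors the double functor acts as $F$ itself.

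The cells of the double category $\CV\text{-}\mathbf{Mod}$ are squares: a module $m\colon B\to A$ over $\CV$-functors $f\colon A\to A'$, $g\colon B\to B'$ to $m'\colon B'\to A'$, with $\sigma\colon m\Ra f^*\circ m'\circ g_*$. Such a square is the same as a $\CV$-functor $m\small{\uparrow}B\to m'\small{\uparrow}B'$ restricting to $f$ and $g$ on the two sieves. We apply $F$ to this functor and restrict along $Fi_m,Fj_m$ and $Fi_{m'},Fj_{m'}$. This gives $\tilde{F}\sigma\colon \tilde{F}m\Ra (Ff)^*\tilde{F}m'(Fg)_*$. Functoriality of $F$ on the collage functors yields functoriality of $\tilde{F}$ on vertical and horizontal pastings of squares.

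For the lax comparisons we need two things.

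\emph{Unit comparison.} A comparison $1_{FA}\Ra \tilde{F}(1_A)$ comes from applying $F$ to the codiagonal-type functor $1_A\small{\uparrow}A\to A$. The unit $1\Ra (Fi)^*(Fj)_*$ is then obtained from the 2-cell $\gamma$ by mates.

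\emph{Composition comparison.} For $C\xra{n}B\xra{m}A$ we use the three-object collage $W$. This is the $\CV$-category with objects $A\sqcup B\sqcup C$, homs $m$, $n$ between consecutive blocks, and $m\circ n$ from $C$ to $A$. It contains $m\small{\uparrow}B$, $n\small{\uparrow}C$ and $(mn)\small{\uparrow}C$ as full subcategories, and these inclusions are fully faithful, so $F$ keeps them fully faithful. In any $\CV$-category $X$ with fully faithful pieces, composition in $X$ gives a map
\[
X(a,b)\otimes X(b,c)\to X(a,c),
\]
that is, a 2-cell $(Fi)^*(Fk)_*(Fk)^*(Fj)_*\Ra (Fi)^*(Fj)_*$ using $(Fk)_*(Fk)^*\Ra 1$. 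Applying this to $X=FW$ and transporting along the fully faithful inclusions identifies the source with $\tilde{F}m\circ\tilde{F}n$ and the target with $\tilde{F}(mn)$. Hence it gives $\mu\colon\tilde{F}m\circ\tilde{F}n\Ra\tilde{F}(m\circ n)$. Associativity and unit coherence follow from associativity of composition in $F$ of the four-object collage, together with naturality of $F$ with respect to the collage functors. The compatibility of $\mu$ with squares follows from functoriality of $F$ on the induced functors between three-object collages.

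The main obstacle is the identification in the composition step. For $\mu$ we must check that $F$ applied to the full inclusion $(mn)\small{\uparrow}C\hookrightarrow W$ gives the same value as $\tilde F(mn)$. This uses only that $F$ preserves full faithfulness, and not any pushout preservation. That is exactly why we only get laxness rather than invertibility, in contrast to H2 in Theorem~\ref{univhomodforM*}. I would verify this first, and then check the coherence squares by writing every comparison as $F$ of an explicit functor between multi-block collages.
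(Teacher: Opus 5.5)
Your proposal is correct and is essentially the paper's argument made concrete; you additionally spell out the action on squares, which the paper leaves implicit. The paper applies Theorem~\ref{ffthm} to $T=S\circ F$: there the composition comparison is the no-longer-invertible mate of $T$ applied to the bipushout square in \eqref{compMspres}, followed by isomorphisms using that $T\ell$ is fully faithful. Your three-object collage $W$ is precisely $y\uparrow C$, and your composition map in $FW$ is exactly that mate.

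One small correction to your unit step, which also recovers the paper's normality claim. $F$ applied to the codiagonal $r\colon 1_A\uparrow A\to A$ gives a map $\tilde F(1_A)\Rightarrow 1_{FA}$, not $1_{FA}\Rightarrow\tilde F(1_A)$. Since $i\dashv r\dashv j$ in $\CV\text{-}\mathrm{Cat}$ and $F$ preserves adjunctions, $\tilde F(1_A)=(Fi)^*(Fj)_*\cong(Fr)_*(Fj)_*\cong 1_{FA}$, so this map is inverse to the unit you obtain from $F\gamma$ by mates.
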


\begin{theorem}[Bilkova et al. \cite{BKPVb}]\label{Thm2} If $F : \CV\text{-}\mathrm{Cat} \to \CV\text{-}\mathrm{Cat}$ preserves exact squares then $(F,\tilde{F})$ is a pseudo-double functor. 
\end{theorem}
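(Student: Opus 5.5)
We argue directly in $\CV\text{-}\mathrm{Cat}$ and $\CV\text{-}\mathrm{Mod}$. Recall that a square of $\CV$-functors with a 2-cell is \emph{exact} when the mate of its image under \eqref{lowstar} is invertible. For a module $B\xra{m}A$, the extension is
$$\tilde{F}m=(Fi_m)^*\circ(Fj_m)_* ,\qquad\text{that is,}\qquad \tilde{F}m(a',b')=F(m\small{\uparrow}B)(Fi_m a',Fj_m b') .$$
Theorem~\ref{Thm1} already supplies the lax double functor structure, with comparison 2-cells $\tilde{F}m\circ\tilde{F}n\Ra\tilde{F}(m\circ n)$ and $1_{FA}\Ra\tilde{F}(1_A)$. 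So the plan is only to show that these comparisons are invertible, using preservation of exact squares. First we note that a $\CV$-functor $f$ is fully faithful exactly when the square with sides $f,f,1,1$ is exact. Hence the hypothesis includes that of Theorem~\ref{Thm1}, and $F$ preserves full faithfulness.

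For composition, take $C\xra{n}B\xra{m}A$ and form the triple collage $X$. Its objects are those of $A$, $B$ and $C$; its homs are those of $A,B,C$ together with $m$, $n$, and $X(a,c)=\int^b n(b,c)\ot m(a,b)$ on the mixed pairs. This is the category $y\small{\uparrow}C$ from the proof of Theorem~\ref{univhomodforM*}. The square
$$B\xra{j_m}m\small{\uparrow}B\xra{i_y}X ,\qquad B\xra{i_n}n\small{\uparrow}C\xra{h}X$$
is the pushout \eqref{posieve} of the sieve inclusion $i_n$ along $j_m$. By Proposition~\ref{po-sieve-cosieve}, the canonical morphism $j_{m*}\circ i_n^*\to i_y^*\circ h_*$ is invertible, so the square is exact. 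Applying $F$ gives an exact square, and therefore
$$\tilde{F}m\circ\tilde{F}n=(Fi_m)^*(Fj_m)_*(Fi_n)^*(Fj_n)_*\cong (Fi_m)^*(Fi_y)^*(Fh)_*(Fj_n)_* .$$
This is the restriction of the homs of $FX$ to pairs from $FA$ and $FC$. Next, the collage $z\small{\uparrow}C$ of $m\circ n$ is the full subcategory of $X$ on the objects of $A$ and $C$, through the fully faithful functor $\ell$. Since $F\ell$ is again fully faithful, $(F\ell)^*(F\ell)_*\cong 1$, and hence
$$\tilde{F}(m\circ n)=(Fi_z)^*(Fj_z)_*\cong (Fi_z)^*(F\ell)^*(F\ell)_*(Fj_z)_* .$$
This is the same restriction of $FX$, because $\ell i_z=i_yi_m$ and $\ell j_z=hj_n$. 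This mirrors the chain of isomorphisms in the proof of Theorem~\ref{univhomodforM*}.

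For units, the collage of $1_A$ is $A\otimes\mathbf{2}$, with inclusions $i,j$ and 2-cell $i\Ra j$. The lax square consisting of $1_A$, $1_A$, $i$, $j$ and this 2-cell has mate $1_{A*}\Ra i^*j_*$, which is invertible since $i^*j_*(a,a')=A(a,a')$. So this square is exact, and $F$ preserves it; this yields $1_{FA}\cong (Fi)^*(Fj)_*=\tilde{F}1_A$.

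The main obstacle is checking that the isomorphisms built above are exactly Worrell's lax comparison cells, and not merely some other isomorphisms between the same modules. I would do this by tracing both maps through the universal properties of the collages: the comparison cells are induced by the functors $h,\ell$, which are the same functors used here. Coherence with respect to $\CV$-functors, the vertical morphisms, is then inherited from Theorem~\ref{Thm1}.
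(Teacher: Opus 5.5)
Your argument is correct and is essentially the paper's proof written out by hand. The paper applies Theorem~\ref{exactthm} to $T=S\circ F$: $T$ is homodular (H1 because $T$ lands in maps, H2 because $S$-exact squares are $T$-exact), so the universality of Theorem~\ref{univhomodforM*} applies, and that proof's chain of isomorphisms is exactly yours. Its second step is the mate of the image of your sieve-pushout square, and its sixth step is the full faithfulness of $F\ell$. Working directly buys you two small clarifications. It makes explicit that this full faithfulness also follows from preservation of exact squares. It also settles your ``main obstacle'': by the proof of Theorem~\ref{ffthm}, Worrell's lax comparison cell is this same chain with only the second-step mate non-invertible, so exactness makes the comparison invertible.
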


Theorems~\ref{ffthm} and \ref{exactthm} were inspired by that question. 
 
 \begin{definition}\label{homoddef}  If $T :\CK\to \CX$ is a pseudofunctor, we say a morphism 
$f\in \CK$ is {\em $T$-fully faithful} when $Tf$ has a right adjoint $Tf^{*}$ with invertible unit.
\end{definition}

\begin{theorem}\label{ffthm} Suppose $S : \CK \to \CM$ is module equipment and $T :\CK\to \CX_*$ is a pseudofunctor. If every $S$-fully faithful morphism of $\CK$ is also $T$-fully faithful then $\bar{T} : \CM\to \CX$ is a normal lax functor extending $T$. 
\end{theorem}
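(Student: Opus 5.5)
We define $\bar{T}$ by the same formula as in the proof of Theorem~\ref{univhomodforM*}. For $B\xra{m}A$ in $\CM$ with collage \eqref{collm}, the injections $i_m$ and $j_m$ lie in $\CK$ by Definition~\ref{defnequip}(iii), and we put $\bar{T}m=(Ti_m)^*\circ Tj_m$. This makes sense because $T$ lands in $\CX_*$, so every $Tf$ has a right adjoint; we do not need H1 for this. On $2$-cells, $\bar{T}$ is defined via the universal property of collages: a $2$-cell $m\Ra m'$ induces a morphism $m\small{\uparrow}B\to m'\small{\uparrow}B$ in $\CK$, and we take mates. The plan has three parts: define the lax constraint $\bar{T}m\circ\bar{T}n\Ra\bar{T}(m\circ n)$, show the unit constraint is invertible (normality), and show that $\bar{T}$ extends $T$.

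\textbf{Comparison data.} For composable $C\xra{n}B\xra{m}A$ we use $z=m\circ n$, $y=j_m\circ n$, and the morphisms $h$, $\ell$, $\bar{\ell}\cong\ell^*$ from the proof of Theorem~\ref{univhomodforM*}. All of that construction took place in $\CM$ and $\CK$ and did not use homodularity of $T$. In particular, the unit of $\ell\dashv\ell^*$ is invertible in $\CM$, so $\ell$ is $S$-fully faithful. Likewise $i_m$, $i_y$ and $i_z$ are $S$-fully faithful by Proposition~\ref{collprop}. The hypothesis then makes $T\ell$, $Ti_m$, $Ti_y$ and $Ti_z$ $T$-fully faithful.

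\textbf{Lax constraint.} Without H2 the step $Tj_m\circ(Ti_n)^*\cong(Ti_y)^*\circ Th$ may fail. We replace it by the canonical mate $Tj_m\circ(Ti_n)^*\Ra(Ti_y)^*\circ Th$ of the isomorphism $T(i_y j_m)\cong T(hi_n)$. This mate is built from the unit of $Ti_y\dashv(Ti_y)^*$ and the counit of $Ti_n\dashv(Ti_n)^*$. The remaining steps of the chain of isomorphisms in the proof of Theorem~\ref{univhomodforM*} are
\[(Ti_m)^*(Ti_y)^*\cong T(i_yi_m)^*,\qquad T(i_yi_m)\cong T(\ell i_z),\qquad T(hj_n)\cong T(\ell j_z),\]
which only use pseudofunctoriality of $T$ and composition of adjoints. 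The last step needs $(T\ell)^*T\ell\cong 1$, and this holds because $\ell$ is $T$-fully faithful. So $\bar{T}$ has a composition comparison $\bar{T}m\circ\bar{T}n\Ra\bar{T}(m\circ n)$ that is invertible except for the single mate.

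\textbf{Normality and extension.} For $u$ in $\CK$ we need $\bar{T}u\cong Tu$. Proposition~\ref{collprop} gives $u\cong i_u^*\circ j_u$ in $\CM$, but transporting this to $\CX$ requires $T(i_u^*)$, which need not exist since $i_u^*$ need not lie in $\CK$. Instead we argue inside $\CX$. The identity $\gamma_u$ gives $Tu\Ra(Ti_u)^*Tj_u$ as the composite
\[Tu\xRa{\ \eta\ }(Ti_u)^*Ti_u\,Tu\xRa{\ T\gamma\ }(Ti_u)^*Tj_u,\]
and $\eta$ is invertible because $i_u$ is $S$-fully faithful and hence $T$-fully faithful. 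To see that the second factor is invertible, we use the dual of Proposition~\ref{collprop} (the remarks after \eqref{cosliceCandu}): $j_u$ has a left adjoint $r$ in $\CK$ with $r j_u\cong 1$ and $r i_u\cong u$. Applying $T$ gives $Tu\cong Tr\,Ti_u$, and from this we try to identify $(Ti_u)^*Tj_u$ with $Tu$. I expect this identification to be the main obstacle. My first attempt is to prove the invertibility after composing with $Ti_u$ and with $Tj_u$, mimicking the component-wise argument given for $\bar{\ell}\cong\ell^*$. Taking $u=1_A$ then yields normality, $\bar{T}1_A\cong 1_{TA}$.

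Finally, the lax functor axioms (associativity and unit coherence of the constraints) follow because every constraint is a pasting of mates of images under $T$ of canonical isomorphisms between collage morphisms. Both ways around the associativity hexagon reduce to the mate of one comparison in the collage of a triple composite, and the universal property of that collage forces the two to agree. This verification is routine but lengthy, just as coherence was in the proof of Theorem~\ref{univhomodforM*}.
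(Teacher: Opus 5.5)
Your composition comparison is the paper's argument. In the chain of isomorphisms from the proof of Theorem~\ref{univhomodforM*}, the second step (the Beck--Chevalley step that used H2) becomes the mate $Tj_m\circ(Ti_n)^*\Ra(Ti_y)^*\circ Th$. The hypothesis enters only at the sixth step, $(T\ell)^*\circ T\ell\cong 1$, because $\ell$ is $S$-fully faithful.

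\textbf{The gap.} You leave normality and $\bar{T}u\cong Tu$ unproved, and the obstruction you name is not real. For $u\colon C\to A$ in $\CK$, the right adjoint $i_u^*$ of the collage injection \emph{does} lie in $\CK$:
\begin{itemize}
\item In $\CM$ its composites with the injections are $i_u^*\circ i_u\cong 1_A$ and $i_u^*\circ j_u\cong u$ (Proposition~\ref{collprop}). Both are in $\CK$, so Definition~\ref{defnequip}(iii) puts $i_u^*$ in $\CK$, and the unit and counit lift by (ii).
\item Alternatively, the collage exists in $\CK$ and is preserved (Proposition~\ref{uupCandCupu}). Proposition~\ref{collprop} applied in $\CK$ itself then gives $i_u\dashv i_u^*$ there.
\end{itemize}
Since $T$ preserves adjunctions, $(Ti_u)^*\cong T(i_u^*)$. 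Then $\bar{T}u=(Ti_u)^*\circ Tj_u\cong T(i_u^*\circ j_u)\cong Tu$ is a chain of isomorphisms. This is the ``final calculation'' the paper's proof invokes, and taking $u=1_A$ gives normality. Under this identification, your composite $Tu\Ra(Ti_u)^*\circ Tj_u$ is just $T$ applied to the invertible mate $u\cong i_u^*\circ j_u$ of $\gamma$. So no full-faithfulness hypothesis is needed for this part.

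\textbf{Why your workaround fails.} The left adjoint $r$ of $j_u$ with $r\circ j_u\cong 1$ and $r\circ i_u\cong u$ belongs to the other coslice $C\small{\uparrow}u$ of \eqref{cosliceCandu}, where $i_u\colon C\to C\small{\uparrow}u$ and $j_u\colon A\to C\small{\uparrow}u$. For the collage $u\small{\uparrow}C$ of \eqref{collf}, which is what defines $\bar{T}u$, these relations do not typecheck, and $j_u$ there has no left adjoint in general. The component-wise argument used for $\bar{\ell}\cong\ell^*$ cannot be imitated either, for two reasons. The $2$-cell you need to invert is between morphisms $TC\to TA$, not morphisms out of a collage. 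And $T(u\small{\uparrow}C)$ has no collage universal property in $\CX$; that argument worked only because it took place in $\CM$.
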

\begin{proof} In the proof of Theorem 4.7 of \cite{46} in the calculation displayed below diagram (4.20), we see that at the second step we only have a morphism since the Chevalley-Beck condition is not assumed anymore. At step six the full faithfulness condition comes in. In the final calculation of that proof we do still have isomorphisms; so $\bar{T}$ is normal. 
\end{proof}
Theorem~\ref{Thm1} is obtained by applying Theorem~\ref{ffthm} to the left-hand diagram of \eqref{extend}
with $T= S\circ F$.

\begin{eqnarray}\label{extend}
\begin{aligned}
\xymatrix{
\CK \ar[d]_{F}^(0.5){\phantom{aaaaa}}="1" \ar[rr]^{S}  && \CM \ar[d]^{\tilde{F}}_(0.5){\phantom{aaaaa}}="2" \ar@{=>}"1";"2"_-{\simeq}
\\
\CK \ar[rr]_-{S} && \CM 
} \
\quad \phantom{aa}
\
\xymatrix{
A \ar[d]_{j}^(0.5){\phantom{aaaaa}}="1" \ar[rr]^{i}  && X \ar[d]^{h}_(0.5){\phantom{aaaaa}}="2" \ar@{=>}"1";"2"_-{\gamma}
\\
Y \ar[rr]_-{k} && Q 
}
\end{aligned}
\end{eqnarray}  
\begin{definition}\label{homoddef}  If $T :\CK\to \CX$ is a pseudofunctor, we say the right-hand square 
of \eqref{extend} in $\CK$ is {\em $T$-exact} when $Ti$ and $Tk$ have right adjoints $Ti^{*}$ and $Tk^{*}$, and the mate $Tj \circ Ti^* \Ra Tk^*\circ Th$ of $T\gamma$ is invertible.
\end{definition}
\begin{theorem}\label{exactthm} Suppose $S : \CK \to \CM$ is universal homodular and $T :\CK\to \CX$ is a pseudofunctor landing in $\CX_*$. If every $S$-exact square in $\CK$ is also $T$-exact then $\bar{T} : \CM\to \CX$ is a pseudofunctor extending $T$ along $S$. 
\end{theorem}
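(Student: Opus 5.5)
The plan is to reduce Theorem~\ref{exactthm} to the proof of Theorem~\ref{univhomodforM*}. The key observation is that the only places homodularity of $T$ was used there were the existence of right adjoints $(Ti)^*$ for the relevant morphisms and the invertibility of certain mates. Here the existence of right adjoints is automatic, because $T$ lands in $\CX_*$. We therefore define $\bar{T}m = (Ti_m)^*\circ Tj_m$ for each $B\xra{m}A$ in $\CM$, using the collage \eqref{collm}, exactly as before. The identification $\bar{T}u\cong Tu$ for $u$ in $\CK$ goes through as in that proof. By Proposition~\ref{collprop}, $u\cong i_u^*\circ j_u$ in $\CM$ and $i_u^*$ is in $\CK$. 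The remaining point is that $T(i_u^*)\cong (Ti_u)^*$. This is a mate condition: the square with $i_u$ along two opposite sides and identities along the other two is $S$-exact, because $S$ is locally fully faithful and $i_u\dashv i_u^*$ in $\CM$. Hence it is $T$-exact, which forces the unit of $Ti_u\dashv (Ti_u)^*$ to be invertible. Combining this with the adjunction relations that $i_u^*$ satisfies in $\CK$, I expect to get $T(i_u^*)\cong (Ti_u)^*$; I still need to check this step. In particular this gives normality.

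For composition $C\xra{n}B\xra{m}A$, I follow the chain of isomorphisms in the proof of Theorem~\ref{univhomodforM*} and check each step.
\begin{itemize}
\item[(a)] The step $Tj_m\circ(Ti_n)^*\cong (Ti_y)^*\circ Th$ is the mate of $T$ applied to the square $h\circ i_n\cong i_y\circ j_m$. This square is a bipushout (Proposition~\ref{horizstackprop}) preserved by $S$. By Proposition~\ref{46Prop6.8}, its $S$-mate is invertible, so it is $S$-exact. By hypothesis it is then $T$-exact, which is exactly what is needed.
\item[(b)] The step $(Ti_m)^*\circ(Ti_y)^*\cong T(i_y i_m)^*$ and the analogous step for $\ell\circ i_z$ are just composition of right adjoints in $\CX$.
\item[(c)] The step $(T\ell)^*\circ T\ell\cong 1$ requires the unit of $T\ell\dashv(T\ell)^*$ to be invertible. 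The unit of $\ell\dashv\ell^*$ in $\CM$ was shown to be invertible, so the square with $\ell$ on two sides and identities on the other two is $S$-exact, hence $T$-exact. This is the full-faithfulness transfer used in Theorem~\ref{ffthm}.
\end{itemize}
The same two ingredients show that the lax comparison maps supplied by Theorem~\ref{ffthm} are in fact invertible: the second step of the calculation in \cite{46} that fails in general is exactly a Chevalley--Beck (mate) condition for a square that is $S$-exact. So $\bar{T}$ is a pseudofunctor rather than merely normal lax. Coherence is inherited from the lax-functor structure of Theorem~\ref{ffthm}, since we have only shown that its constraint cells are invertible.

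The main obstacle is bookkeeping. Every mate invoked in the lax calculation must be the mate of a square that is genuinely $S$-exact in the sense of Definition~\ref{homoddef}, that is, a square in $\CK$ with $Si$ and $Sk$ admitting right adjoints. Some of the squares used have $i_m^*$ or $\bar\ell$ along their sides, and these are not obviously in $\CK$. I would first try to replace each such square by one whose edges are injections of collages, which are in $\CK$ by Item~(iii) of Definition~\ref{defnequip}. I would then recover the needed isomorphism by pasting with the known collage isomorphisms $\gamma$ and Proposition~\ref{horizstackprop}.
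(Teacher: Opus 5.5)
Your proposal never uses the hypothesis that $S$ is universal homodular. Instead it tacitly assumes that $S$ is module equipment. Several ingredients you rely on need $\CM$ to be a finitary cosmos and $S$ to satisfy the equipment axioms:
the collage formula $\bar{T}m=(Ti_m)^*\circ Tj_m$;
the fact that collage injections, $h$ and $\ell$ lie in $\CK$ (Item~(iii) of Definition~\ref{defnequip});
Proposition~\ref{46Prop6.8};
the lax structure of Theorem~\ref{ffthm}, which you invoke for coherence.
For an abstract universal homodular $S$ none of this is available. Also, the $\bar{T}$ of the statement is not given by a collage formula. It is the extension supplied by the universal property, and that exists only once $T$ is known to be homodular. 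So as written you prove at most the special case where $S$ is module equipment. Even there you re-run a calculation, with the loose ends you flag, that is unnecessary.

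The missing idea is that the hypotheses make $T$ homodular, after which universality does all the work. H1 holds because $T$ lands in $\CX_*$, so every $Tf$ has a right adjoint. For H2, take a bipushout square \eqref{bipocofs} in $\CK$ with $i$ a cofibration and $j$ a coopfibration. Since $S$ is homodular, H1 and H2 for $S$ say precisely that this square is $S$-exact. By hypothesis it is then $T$-exact, which is H2 for $T$. Universality of $S$ now gives a pseudofunctor $\bar{T}:\CM\to\CX$ with $\bar{T}\circ S\simeq T$, coherence included, with no computation. This is the paper's proof.

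Your step (a) is exactly one instance of this argument. Stating it for all such squares makes (b), (c), the check $\bar{T}u\cong Tu$ and the coherence discussion superfluous.

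Two smaller slips. The square with $i_u$ on two \emph{opposite} sides and identities on the others has a trivially invertible mate; the square encoding full faithfulness has $i_u$ as both edges into the final corner. And $T(i_u^*)\cong(Ti_u)^*$ needs no unit argument: it follows from $T$ preserving the adjunction $i_u\dashv i_u^*$ when that adjunction lives in $\CK$.
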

\begin{proof} Under these conditions $T$ is homodular so universality of $S$ can be used.
Condition H1 for homodularity is clear since $T$ lands in $\CX_*$. Condition H2 says that
certain diagrams should be exact with respect to the pseudofunctor involved so its holding
for $S$ implies it does for $T$.
\end{proof}  
Theorem~\ref{Thm2} is obtained by applying Theorem~\ref{exactthm} to the left-hand diagram of \eqref{extend} with $T= S\circ F$.
  
 \section{The Int construction for modules}
 
 In Section 6 of \cite{51}, we constructed a symmetric autonomous monoidal 
 category $\mathrm{IntRel}$ from the category $\mathrm{Rel}$ of sets and relations with 
 disjoint union of sets for the tensor. A similar construction can be made 
 starting with a finitary cosmos $\CM$ where condition (c) is strengthened to:
 \begin{itemize}
  \item[(c$_{\omega}$)] countable colimits in the hom categories preserved by composition on both sides.
  \end{itemize} 
 This makes $\CM$ an iterative cosmos in the sense of \cite{46}. We will denote the resultant autonomous monoidal bicategory (in the sense of \cite{60}) by $\mathrm{i}\CM$.
 
 We begin by recalling that coproduct $A+B$ is direct sum in in $\CM$; it is both product and coproduct:
\begin{eqnarray*}
\CM(A+B,C)\simeq \CM(A,C)\times \CM(B,C) , \phantom{a} \CM(C,A+B)\simeq \CM(C,A)\times \CM(C,B) \ .
\end{eqnarray*}
The right adjoints $A\xla{\mathrm{in}_1^*} A+B \xra{\mathrm{in}_2^*}B$ of the injections $A\xra{\mathrm{in}_{1}} A+B \xla{\mathrm{in}_{2}}B$
are the projection morphisms. Given a module $R : B_1 + B_2 \to A_1 + A_2$, we obtain a matrix
\begin{eqnarray*}
\begin{bmatrix}
R_{11} & R_{12} \\
R_{21} & R_{22}
\end{bmatrix}
\end{eqnarray*}
of modules where $R_{ij} = (B_j\xra{\mathrm{in}_{i}} B_1 + B_2\xra{R}A_1 + A_2\xra{\mathrm{in}_i^*}A_i)$.

The free monad $R^{\odot}$ on an endomodule $R : U\to U$ is defined by the geometric 
series
\begin{eqnarray*}
R^{\odot} = \sum_{n=0}^{\infty}R^{\circ n} = U + R + R\circ R + R\circ R\circ R + \dots 
\end{eqnarray*}
which works because composition preserves local countable coproducts.
Each morphism $\rho : R\to T$ functorially induces a monad morphism $\rho^{\odot} : R^{\odot}\to T^{\odot}$.  

Here is the appropriate version of Lemma 6.1 of \cite{51}. 

\begin{lemma}\label{6.1of51}
For $R, P :U\to V$ and $S : V\to U$ in $\CM$, there are canonical isomorphisms:
\begin{itemize}
\item[(i)] $(R+S)^{\odot} \cong (R^{\odot}\circ S)^{\odot}\circ R^{\odot}$ for $U=V$;
\item[(ii)] $(R\circ S)^{\odot} \cong U+R\circ (S\circ R)^{\odot}\circ S$;
\item[(iii)] $R^{\odot}\cong U + R\circ R^{\odot}$ for U = V;
\item[(iv)] $R^{\odot}\circ R \cong R\circ R^{\odot}$ for U = V;
\item[(v)] $(R\circ S)^{\odot}\circ R \cong R\circ (S\circ R)^{\odot}$;
\item[(vi)] $(U\xra{0}U)^{\odot} \cong (U\xra{U}U)$.
\end{itemize}
\end{lemma}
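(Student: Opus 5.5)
We prove Lemma~\ref{6.1of51} by the same formal manipulation of geometric series used for Lemma 6.1 of \cite{51}. The only structure needed is that composition in $\CM$ preserves countable coproducts in each variable, by (c$_{\omega}$), and that the hom categories have these coproducts. We never use inverses or subtraction. Each isomorphism will be obtained by expanding both sides into countable coproducts of composites of $R$, $S$ and $P$, and then matching summands by an explicit bijection of index sets. Canonicity comes from the fact that composition is distributive up to coherent isomorphism.

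We treat the easy items first. For (vi), $0^{\circ n}\cong 0$ for $n\geq 1$, since composition preserves the empty coproduct, so $0^{\odot}\cong U$. For (iii), distributivity gives $R\circ R^{\odot}\cong\sum_{n\geq 0}R^{\circ(n+1)}$, and adding the summand $U=R^{\circ 0}$ recovers $R^{\odot}$. Item (iv) holds because both $R^{\odot}\circ R$ and $R\circ R^{\odot}$ are isomorphic to $\sum_{n\geq1}R^{\circ n}$. For (v), both $(R\circ S)^{\odot}\circ R$ and $R\circ(S\circ R)^{\odot}$ expand to $\sum_{n\geq 0}(RS)^{n}R\cong\sum_{n\geq0}R(SR)^n$; the summands agree by the associativity constraints of the bicategory. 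Item (ii) follows from (v) together with the $(RS)$-instance of (iii):
\begin{eqnarray*}
(R\circ S)^{\odot}\cong U+(R\circ S)\circ(R\circ S)^{\odot}\cong U+R\circ (S\circ R)^{\odot}\circ S \ .
\end{eqnarray*}
Here we write $U$ for its identity morphism, as the paper does. The module $P$ plays no role in these items; it is presumably used later in the paper.

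The main step is (i), the sum formula. We expand $(R+S)^{\circ n}$ by distributivity into a coproduct over all words of length $n$ in the letters $R$ and $S$, so that $(R+S)^{\odot}\cong\sum_{w}w$ over all finite words $w$. On the other side, $(R^{\odot}\circ S)^{\odot}\circ R^{\odot}$ expands into $\sum_{k\geq0}\sum_{n_0,\dots,n_k\geq 0}R^{n_k}SR^{n_{k-1}}S\cdots SR^{n_0}$. Each word factors uniquely by cutting at its occurrences of $S$: the number $k$ counts the $S$'s, and the $n_i$ are the lengths of the blocks of $R$'s between them. This unique factorization gives a bijection of index sets, and hence the required isomorphism of countable coproducts. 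The obstacle here is to check that the isomorphism obtained by iterated distributivity is well defined and canonical, that is, independent of how the nested sums are bracketed. We plan to handle this by a coherence argument. All the isomorphisms involved are built from the associator, the unitors and the canonical comparison maps $\sum(X_\alpha\circ Y)\to(\sum X_\alpha)\circ Y$, and coherence for bicategories together with naturality of these comparison maps makes any two such composites between the same expanded coproducts agree. The same remark settles the canonicity of the other items.
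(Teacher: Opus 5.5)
Your argument is correct and is essentially the proof the paper has in mind: the paper gives no proof and presents the lemma as the bicategorical version of Lemma 6.1 of \cite{51}, and your approach is exactly the transcription of that relational argument. You expand each side into countable coproducts of composites, use unique factorization of words at the occurrences of $S$ for (i), and get canonicity because each comparison map restricts on summands to bicategorical coherence isomorphisms. One small correction: since $R\circ S : V\to V$, the leading summand in (ii) should be the identity of $V$ rather than of $U$ (a slip in the statement itself), and your derivation from the $(R\circ S)$-instance of (iii) in fact produces $1_V$.
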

 
 This brings us to the definition of the bicategory $\mathrm{i}\CM$. The objects are
 pairs $(X,U)$ of objects of $\CM$. Morphisms $R : (X,U)\to (Y,V)$ are morphisms $X+V\to Y+U$ in $\CM$ with matrices
 \begin{eqnarray*}
\begin{bmatrix}
A & B \\
C & D
\end{bmatrix}
\end{eqnarray*}
depicted by diagrams in $\CM$ as in \eqref{iMmorph}.
 \begin{eqnarray}\label{iMmorph}
\begin{aligned}
\xymatrix{
X \ar[rr]^-{A} \ar[d]_-{C} && Y  \\
U  && V \ar[u]_-{B} \ar[ll]^-{D}}
\end{aligned}
\end{eqnarray}
Composition of morphisms is as shown in diagram \eqref{iModmorphcomp}
 (where the composition symbol $\circ$ is omitted to save space).
\begin{eqnarray}\label{iModmorphcomp}
\begin{aligned}
\xymatrix{
X \ar[r]^-{A} \ar[dd]_-{C} & Y \ar[r]^-{E} \ar@/^/[dd]^-{G}  & Z  \\
\\
U & V  \ar[l]^-{D} \ar@/^/[uu]^-{B}  & W \ar[uu]^-{F}  \ar[l]^-{H}
}
\qquad  \xymatrix{ \\ = \phantom{a} }
 \xymatrix{
X \ar[rr]^-{E(BG)^{\odot}A} \ar[dd]_-{C+D(GB)^{\odot}GA} & & Z  \\
\\
U &    & W \ar[uu]_-{F+E(BG)^{\odot}BH}  \ar[ll]^-{D(GB)^{\odot}H}
}
\end{aligned}
\end{eqnarray}
The identity morphism of $(X,U)$ is 
$
\scriptsize{\begin{bmatrix}
X & 0 \\
0 & U
\end{bmatrix}}$.

The 2-morphisms $\alpha : R\Ra S : (X,U)\to (Y,V)$ are 2-morphisms $\alpha : R\Ra S : X+V\to Y+U$; and these can be written as $2\times 2$ matrices of 2-morphisms between the entries of the matrices for $R$ and $S$. The associativity equivalences are obtained using Lemma~\ref{6.1of51}. 

The autonomous monoidal structure on $\mathrm{i}\CM$ has the symmetric tensor product
given on objects by $$(X,U)\boxplus(X',U') = (X+X',U+U')$$ and the dual given by $(X,U)^*=(U,X)$
with counit as in diagram \eqref{autoncounit}. 
 \begin{eqnarray}\label{autoncounit}
\begin{aligned}
\xymatrix{
X+U \ar[rr]^-{} \ar[d]_-{\scriptsize{\begin{bmatrix}
0 & U \\
X & 0
\end{bmatrix}}} && 0  \\
U+X  && 0 \ar[u]_-{} \ar[ll]^-{}}
\end{aligned}
\end{eqnarray}
There is another monoidal structure on $\mathrm{i}\CM$ defined on objects by 
$$(X,U)\boxtimes (X',U') = (X\ot X' + U\ot U',X\ot U'+ U\ot X') \ .$$ The inclusion pseudofunctor
$\CM \to \mathrm{i}\CM$ taking $X$ to $(X,0)$ is strong
monoidal in that it takes $\ot$ to $\boxtimes$.

\appendix

\end{document}